\numberwithin{equation}{section}
\numberwithin{table}{section}
\numberwithin{equation}{section}
\newtheorem{theorem}{Theorem}[section]
\newtheorem{lemma}[theorem]{Lemma}
\newtheorem{question}[theorem]{Question}
\newtheorem{proposition}[theorem]{Proposition}
\newtheorem{corollary}[theorem]{Corollary}
\theoremstyle{definition}
\newtheorem{example}[theorem]{Example}
\newtheorem{remark}[theorem]{Remark}
\theoremstyle{definition}
\newtheorem{definition}[theorem]{Definition}
\def\la{\lambda}
\def \ot{\otimes}
\def \As{\mathcal{A}ss}
\def \Com{\mathcal{C}om}
\def \ip{{\mathcal{P}}}
\def \iq{\mathcal {Q}}
\def \iu{\!{\it{\Upsilon}}}
\def\gkdim{\operatorname{GKdim}}
\def \Ker{\operatorname{Ker}}
\def \ker{\operatorname{Ker}}
\def \1{\mathbbm 1}
\def \C{\mathcal{C}}
\def \k{\Bbbk}
\def \Mag{\mathcal{M}ag}
\def \S{\mathbb{S}}
\def \n{[n]}
\newcommand{\up}[1]{{^{#1}{\it \!\Upsilon}}}
\newcommand{\ucr}[1]{\underset{#1}{\circ}}
\def\rmod{\mbox{\rm mod-}}
\def\Op{\mathsf{Op}}
\def\hot{\underset{{\rm H}}{\otimes}}
\newcommand{\Ar}{\operatorname{Ar}}
\begin{document}
\title[2-unitary Operads of GK-dimension 3]
{2-unitary Operads of GK-dimension 3}

\subjclass[2010]{}

\keywords{}

\author{Yan-Hong Bao}
\address{(Bao) School of Mathematical Sciences, Anhui University, Hefei 230601, China}
\address{(Bao) Anhui University Center for Pure Mathematics, Hefei 230601, China}
\email{baoyh@ahu.edu.cn, yhbao@ustc.edu.cn}

\author{Dong-Xing Fu}
\address{(Fu) School of Mathematical Sciences, Anhui University, Hefei 230601, China}
\email{fudongxing920811@126.com}

\author{Yu Ye}
\address{(Ye) School of Mathematical Sciences,
University of Sciences and Technology of China, Hefei 230001, China}
\email{yeyu@ustc.edu.cn}

\author{James J. Zhang}
\address{(Zhang) Department of Mathematics, Box 354350,
University of Washington, Seattle, Washington 98195, USA}
\email{zhang@math.washington.edu}

\date{\today}

\begin{abstract}
We study and classify the 2-unitary operads of Gelfand-Kirillov
dimension three.
\end{abstract}

\maketitle

\setcounter{section}{-1}

\section{Introduction}
\label{xxsec0}

Algebraic operads originated from homotopy theory in algebraic
topology, and was first introduced by Boardman-Vogt \cite{BV}
and May \cite{Ma} in 1960s-1970s. During the recent 20 years,
operad theory has become an important tool in homological algebra,
category theory, algebraic geometry and mathematical physics.
It is well-known that every operad encodes an algebra system. For
example, $\As$ encodes all unital associative algebras. Further,
a $\Bbbk$-linear operad itself is an algebraic object similar to
an associative algebra, and algebraic structures of operads have
been widely investigated by many mathematicians, see
\cite{BYZ, Dot, DK, DMR, DT, Fr1,Fr2,KP,LV, MSS, QXZZ}.

The Gelfand-Kirillov dimension of an associative algebra is a useful 
numerical invariant in ring theory and noncommutative algebraic 
geometry, see \cite{KL}. In a similar way, the Gelfand-Kirillov 
dimension can be defined for other algebraic objects including 
algebraic operads \cite{BYZ, Fi}. Let $\Bbbk$ is a base field. An 
operad $\ip$ is said to be {\it locally finite} if each $\ip(n)$ is 
finite dimensional over $\Bbbk$. In this paper we only consider 
locally finite operads. The {\it Gelfand-Kirillov dimension} (or 
{\it GK-dimension} for short) of a locally finite operad $\ip$ 
is defined to be
\[\gkdim\ip\colon=\limsup_{n\to \infty}
\log_n\left(\sum_{i=0}^n \dim_{\Bbbk} \ip(i)\right).\]
We refer to \cite{BYZ, KP, QXZZ} for more information related to
the GK-dimension of an operad.

Recall that an operad $\ip$ is \textit{unitary} if $\ip(0)=
\Bbbk \1_0$ with a basis element $\1_0$ (which is called a
$0$-unit), see \cite[Section 2.2]{Fr2}. Denote by $\Op_+$ the
category of unitary operads, in which a morphism preserves the
0-unit. A \textit{2-unitary operad} $\ip$ is a unitary operad
$\ip$ equipped with a morphism $\Mag\to \ip$ in $\Op_+$, where
$\Mag$ is the unital magmatic operad (see \cite[Section 8.4]{BYZ}
or \cite[Section 4.1.10]{Lo}). The definition of a $2a$-unitary
operad is given in Definition \ref{xxdef1.5}(4). In \cite{BYZ},
the authors proved that GK-dimension of a 2-unitary operad $\ip$
is either an nonnegative integer or infinity and that the generating
series of $\ip$ is a rational function when $\gkdim \ip<\infty$.
The pattern of GK-dimension of a general non-2-unitary operad (or
nonsymmetric operad) is very different, see Remark \ref{xxrem5.5}.

The only 2-unitary operad of GK-dimension 1 is $\Com$ that encodes
all unital commutative algebras. All locally
finite 2-unitary operads of GK-dimension $2$ were classified
in \cite[Theorem 0.6]{BYZ}. One way of viewing this classification
is the following. We refer to \cite[Section 6]{BYZ} for the
construction of $2$-unitary operads of GK-dimension $2$.

\begin{theorem} \cite[Theorem 0.6]{BYZ}
\label{xxthm0.1}
There are natural equivalences between
\begin{enumerate}
\item[(1)]
the category of finite dimensional, not necessarily unital,
$\Bbbk$-algebras,
\item[(2)]
the category of $2$-unitary operads of GK-dimension $\leq 2$,
\item[(3)]
the category of $2a$-unitary operads of GK-dimension $\leq 2$.
\end{enumerate}
\end{theorem}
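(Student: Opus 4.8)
The plan is to prove both equivalences by producing explicit quasi-inverse functors, the real content being a rigidity statement forced by $\gkdim\ip\le 2$; throughout I use the construction of \cite[Section 6]{BYZ} and the quoted fact that such operads have rational generating series. In one direction, send a finite-dimensional non-unital algebra $A$ to the operad $\ip_A$ of \cite[Section 6]{BYZ}, whose arity-$n$ part is a one-dimensional commutative line together with one copy of $A$ decorating each of the $n$ inputs, with composition governed by commutativity on the lines and by the product of $A$ when two decorations collide. One checks that $\ip_A$ is unitary with $\ip_A(0)=\Bbbk\1_0$, that the distinguished binary element makes it $2$-unitary (the binary operation being associative, even $2a$-unitary), and that $\dim_\Bbbk\ip_A(n)=1+n\dim_\Bbbk A$, whence $\gkdim\ip_A\le 2$.

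In the reverse direction, attach to a $2$-unitary operad $\ip$ the algebra $G(\ip):=\ker\bigl(\e\colon\ip(1)\to\Bbbk\bigr)$, where $\e(p)\,\1_0=p\ucr{1}\1_0$ records the scalar by which $p$ acts on the $0$-unit. Because operadic composition is associative and unital, $\ip(1)$ is a finite-dimensional unital associative algebra and $\e$ is an algebra augmentation with $\e(\id)=1$, so $G(\ip)$ is a finite-dimensional non-unital algebra of dimension $\dim_\Bbbk\ip(1)-1$. Both assignments are plainly functorial: an algebra map induces a morphism of the decorated operads, and an operad morphism preserves $\1_0$ hence the augmentation, restricting to $\ip(1)$.

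The crux, and the step I expect to be the main obstacle, is to prove $\ip\cong\ip_{G(\ip)}$ for every $2$-unitary $\ip$ with $\gkdim\ip\le 2$. I would begin with the insertion maps $\ip(n)\to\ip(n-1)$ obtained by plugging $\1_0$ into each of the $n$ slots, and use them to filter $\ip(n)$; combined with the quoted rationality this should force the Hilbert series to equal $\tfrac{1}{1-x}+\tfrac{d\,x}{(1-x)^2}$ with $d=\dim_\Bbbk G(\ip)$, so that $\dim_\Bbbk\ip(n)=1+nd$ exactly. I would then single out in each arity a one-dimensional commutative part, on which all insertion maps are isomorphisms, and show that the complementary $nd$-dimensional space is spanned by position-decorated copies of $G(\ip)$ whose operadic composition reproduces precisely the defining rule of $\ip_{G(\ip)}$. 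The difficulty is that the growth hypothesis a priori controls only dimensions and not the composition maps; the work is to exclude any extra operadic data --- for instance genuinely quadratic or non-associative contributions appearing first in $\ip(3)$ --- since each such contribution would push $\dim_\Bbbk\ip(n)$ above the affine-linear ceiling $1+nd$ and is therefore impossible.

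Granting this rigidity, the natural isomorphisms $G(\ip_A)\cong A$ and $\ip_{G(\ip)}\cong\ip$ are straightforward and natural in $A$ and $\ip$, yielding the equivalence of (1) and (2). For (2) versus (3), restriction along the canonical quotient $\Mag\twoheadrightarrow\ias$ exhibits every $2a$-unitary operad as a $2$-unitary one, defining the forgetful functor $(3)\to(2)$; the same rigidity shows that the distinguished binary operation of any operad with $\gkdim\le 2$ is automatically associative, since a failure of associativity would again violate the affine-linear profile in arity $3$. Hence this functor is essentially surjective and fully faithful, completing the chain of equivalences.
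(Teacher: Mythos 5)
Your proposal is correct and takes essentially the same route as the paper: for this statement the paper simply cites \cite[Theorem 0.6]{BYZ}, but the machinery it quotes --- the truncation-ideal basis theorem (Theorem \ref{xxthm1.7}) and Lemma \ref{xxlem1.8}, by which $\gkdim \ip \le 2$ forces $\up{2}=0$ and hence $\dim \ip(n)=1+nd$ exactly --- is precisely your insertion-map filtration, and your reconstruction $\ip\cong \ip_{G(\ip)}$ from $\ip(1)$ with its augmentation is the $M=0$ specialization of the paper's own GK-dimension-3 proof (Theorem \ref{xxthm4.2}; compare $F(A,0,0,0)$ in Proposition \ref{xxpro5.3}). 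Your final rigidity point, that a failure of associativity in arity $3$ would produce a nonzero element of the second truncation ideal and thus quadratic growth, is exactly the argument of Lemma \ref{xxlem2.2} run one dimension lower.
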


In the case of GK-dimension 3, our result is not 
as clean as Theorem \ref{xxthm0.2}. Nevertheless, we will 
provide a classification. Recall that an operad is called 
{\it $\Com$-augmented} if there is an operadic unit map
$u_{\ip}: \Com\to \ip$. The morphisms in the category of
$\Com$-augmented operads are supposed to be compatible with
operadic unit maps. Here is the main result of this paper.

\begin{theorem}
\label{xxthm0.2}
There is a natural equivalence between
\begin{enumerate}
\item[(1)]
the category of finite dimensional trident algebras,
\item[(2)]
the category of $\Com$-augmented operads of GK-dimension $3$.
\end{enumerate}
\end{theorem}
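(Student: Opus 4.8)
The plan is to prove the equivalence in the spirit of Theorem \ref{xxthm0.1}: I would construct a pair of functors $F$ and $G$ between the two categories, where $G$ extracts a finite-dimensional trident algebra from a $\Com$-augmented operad of GK-dimension $3$ and $F$ reconstructs the operad from the algebra, and then exhibit natural isomorphisms $G\circ F\cong\id$ and $F\circ G\cong\id$. The guiding principle is that the single finite-dimensional algebra governing the GK-dimension $\le 2$ case is replaced by a three-component (``trident'') datum once the growth is quadratic, and the whole difficulty is concentrated in showing that quadratic growth leaves no room for data beyond these three components.

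First I would build $G$ via a support filtration. A $\Com$-augmented operad is in particular $2$-unitary (through $\Mag\to\Com\to\ip$), so the $0$-unit supplies the insertion maps $f\mapsto f\circ_i\1_0$ from $\ip(n)$ to $\ip(n-1)$, which delete an input; stratifying $\ip(n)$ by the minimal set of inputs that cannot be deleted gives a decomposition into a support-$0$ piece (the $\Com$-summand, coming from the augmentation $u_{\ip}\colon\Com\to\ip$), support-$1$ pieces indexed by single positions, support-$2$ pieces indexed by pairs, and higher strata. The key structural step is to prove that $\gkdim\ip=3$ forces all strata of support $\ge 3$ to vanish, so that $\dim_\Bbbk\ip(n)$ is eventually a quadratic (Hilbert) polynomial in $n$ whose three binomial coefficients $\binom{n}{2},\binom{n}{1},\binom{n}{0}$ record the pair, singleton, and $\Com$ layers. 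The $\S_n$-equivariant coefficient spaces of these three layers, together with the multiplication, action, and module-type maps induced on them by the partial compositions $\circ_i$ of $\ip$, assemble into $G(\ip)$; finiteness of the algebra is exactly the statement that these coefficient spaces are finite dimensional and stabilise for large $n$.

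Next I would construct $F$ in the reverse direction. Given a finite-dimensional trident algebra $T$, I prescribe $\ip=F(T)$ by setting each $\ip(n)$ to be the direct sum of $\Com(n)$ with the induced $\S_n$-representations built from the singleton and pair components of $T$, defining the symmetric-group action and the partial compositions from the three structure operations of $T$, and taking the augmentation $\Com\to\ip$ to be inclusion of the support-$0$ summand. The operad axioms (equivariance, associativity, and unitality) would then reduce, layer by layer, to the defining compatibility axioms of a trident algebra, and a direct computation of the generating series — rational of the expected shape, as guaranteed by the result quoted in the introduction — confirms $\gkdim F(T)=3$.

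The main obstacle is the structure theorem underlying $G$: establishing that quadratic growth, rigidified by the $\Com$-augmentation and $2$-unitarity, both kills the higher strata and shows that the finitely many identities read off in low arity are \emph{sufficient}, not merely necessary, to rebuild $\ip$ on all arities. I expect the delicate technical points to be controlling the $\S_n$-action as $n$ varies across the induced-representation decomposition and verifying that no ``exotic'' composition data survive between distant strata. Once this rigidity is in place, functoriality of $F$ and $G$ and the two natural isomorphisms should follow formally by matching explicit formulas, since $F$ is set up to reconstruct precisely the layered data that $G$ records and the trident axioms mirror the operad axioms stratum by stratum.
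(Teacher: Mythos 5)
Your proposal is correct and follows essentially the same route as the paper: your ``support filtration'' is precisely the truncation-ideal filtration $\up{k}$ recalled in Theorem \ref{xxthm1.7} and Lemma \ref{xxlem1.8} (whose part (2) gives, for free from \cite{BYZ}, the vanishing of all support-$\ge 3$ strata when $\gkdim \ip =3$), your reconstruction functor $F$ matches the paper's explicit layered construction (C1)--(C3) whose operad axioms are checked against the trident identities in the appendix proof of Theorem \ref{xxthm4.1}, and your extraction functor $G$ matches the paper's $\mathcal{G}$ with $A=\ip(1)$, $M=\up{2}(2)$, and $f,g$ the defect expressions $a\circ \1_2-\1_2\ucr{1}a-\1_2\ucr{2}a$ and $(\1_2\ucr{1}a)\ucr{2}b$, with $\mathcal{F}\mathcal{G}\cong \id$ following from the basis of Lemma \ref{xxlem2.4}. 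In short, the plan coincides with the paper's proof of Theorem \ref{xxthm4.2}.
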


If ${\text{char}}\; \Bbbk\neq 2$, we also prove that every
2-unitary operad of GK-dimension 3 is equipped with a
$\Com$-augmentation with possibly new 2-unit [Proposition
\ref{xxpro2.5}]. Combining Theorem \ref{xxthm0.2} with
Proposition \ref{xxpro2.5}, we obtain a classification of
2-unitary operads of GK-dimension 3 in terms of finite dimensional 
trident algebras. However, Theorem \ref{xxthm0.2} fails if
the condition ``$\Com$-augmented'' in part (2) is replaced by 
``2-unitary'' [Example \ref{xxex5.4}]. 

The definition of a trident algebra is given in Section
\ref{xxsec3}. Roughly speaking, a trident algebra consists
of a pair of $\Bbbk$-vector spaces $(A,M)$ equipped with some
algebraic structures and a pair of $\Bbbk$-linear maps $(f,g)$
satisfying some identities. Note that a seemingly technical
result, Theorem \ref{xxthm0.2}, makes some questions easy to
solve. For example, we prove

\begin{corollary}[Proposition \ref{xxpro5.3}]
\label{xxcor0.3}
There is no $\Com$-augmented Hopf operad of GK-dimension 2.
\end{corollary}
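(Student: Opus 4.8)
The plan is to argue by contradiction, combining the classification of Theorem~\ref{xxthm0.1} with the constraint that a Hopf structure imposes on the nontrivial part of a GK-dimension~$2$ operad. Suppose $\ip$ were a $\Com$-augmented Hopf operad with $\gkdim\ip=2$, with augmentation $u:\Com\To\ip$ and counits $\e_n:\ip(n)\To\Bbbk$. First I would check that the $\e_n$ assemble into a morphism of operads $\e:\ip\To\Com$: since each $\circ_i$ is a morphism of coalgebras, $\e$ is multiplicative, $\e(\alpha\circ_i\beta)=\e(\alpha)\e(\beta)$, and applying this to $\id\circ_1\id=\id$ together with counitality forces $\e(\id)=1$, so $\e$ respects the operadic unit. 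As the identity is the only operad endomorphism of $\Com$ fixing the $0$-unit, $\e\circ u=\id_{\Com}$; thus $\e$ is a retraction of $u$ and $\I:=\ker\e$ is an operadic ideal with $\ip/\I\cong\Com$. In particular $\dim_{\Bbbk}\I(n)=\dim_{\Bbbk}\ip(n)-1$, which grows linearly because $\gkdim\ip=2$; in particular $\I\neq0$.

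Next I would feed this into Theorem~\ref{xxthm0.1}. The composite $\Mag\To\Com\xrightarrow{u}\ip$ exhibits $\ip$ as a $2$-unitary operad of GK-dimension $\le 2$, so by Theorem~\ref{xxthm0.1} it corresponds to a finite-dimensional $\Bbbk$-algebra $A$, and $\gkdim\ip=2$ (equivalently $\I\neq0$) forces $A\neq0$—the zero algebra corresponds to $\Com$ itself, of GK-dimension~$1$. I would then invoke the explicit model of GK-dimension $\le2$ operads from \cite[Section 6]{BYZ}: the ideal $\I$ is assembled from copies of $A$ decorating the inputs of the operations $g_n:=u(\mu_n)$, with $\ip(1)=\Bbbk\,\id\ltimes A$ and with the operadic composition governed by the multiplication of $A$, by the merging of two decorated inputs, and by the unit-insertion maps $d_j:=(-)\circ_j\1_0:\ip(n)\To\ip(n-1)$.

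The heart of the argument is to transport the comultiplication $\D$ through this model and derive $A=0$, using three compatibilities at once: counitality; the fact that every $d_j$ is a coalgebra morphism (because $\ip(0)=\Bbbk\1_0$ is one-dimensional, so $\1_0$ is grouplike with $\e_0(\1_0)=1$); and the fact that every $\circ_i$ is a coalgebra morphism. I would track the coproduct of a single decoration $a^{(i)}\in\I(n)$: counitality fixes the components of $\D(a^{(i)})$ along $g_n\otimes\ip(n)$ and $\ip(n)\otimes g_n$, compatibility with the $d_j$ propagates these across arities and confines the remaining component to $\I(n)\otimes\I(n)$, and compatibility with the merging composition—the one realising the multiplication of $A$—forces that component to be controlled by the product of $A$. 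Coassociativity then yields a relation on $A$ which, together with a final use of the unital structure of $\ip(1)=\Bbbk\,\id\ltimes A$, forces $A\cdot A=0$ and then $A=0$, contradicting $A\neq0$.

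I expect the main obstacle to be precisely this last step: verifying, inside the model of \cite[Section 6]{BYZ}, that no coassociative counital $\D$ compatible with all $\circ_i$ and all $d_j$ can survive on the decorated part once $A\neq0$. Conceptually this is the degenerate shadow of a cleaner picture—the diagonal coming from the Hopf structure would make the Hadamard square $\ip\hot\ip$ a $\Com$-augmented operad of GK-dimension~$3$, an object governed by Theorem~\ref{xxthm0.2}—and one could instead try to run the contradiction there; but the direct computation above seems the most economical route.
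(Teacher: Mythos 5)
Your opening moves are sound and coincide with the first steps of the paper's proof (Proposition \ref{xxpro5.3}): the counit gives a retraction of $u_{\ip}$, Theorem \ref{xxthm0.1} presents $\ip$ as the operad $F(A,0,0,0)$ attached to a finite-dimensional algebra $A$, and counitality normalizes $\D(a)=\1\otimes a+a\otimes\1+\sum a_{(1)}\otimes a_{(2)}$ with $a_{(1)},a_{(2)}\in\bar A$. But the heart of the argument, which you explicitly defer (``I expect the main obstacle to be precisely this last step''), is exactly where your plan fails, and the levers you name are the wrong ones. Coassociativity cannot yield any relation on $A$: coassociative counital coproducts with nonvanishing $\I(1)\otimes\I(1)$-components abound (ordinary bialgebras are full of non-primitive elements). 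What actually kills $\sum a_{(1)}\otimes a_{(2)}$ is an operadic identity special to GK-dimension $2$: since ${\up{2}_\ip}=0$, one has $a\circ\1_2=\1_2\ucr{1}a+\1_2\ucr{2}a$ with no correction term; applying the operad morphism $\D$ to this identity and using that $\1_2$ is grouplike --- Definition \ref{xxdef5.1}(2), which you never invoke --- the two sides of the resulting equation in $(\ip\hot\ip)(2)$ differ exactly by $\sum\bigl(\varphi^{(2)}_1(a_{(1)})\otimes\varphi^{(2)}_2(a_{(2)})+\varphi^{(2)}_2(a_{(1)})\otimes\varphi^{(2)}_1(a_{(2)})\bigr)$, forcing $\sum a_{(1)}\otimes a_{(2)}=0$, i.e.\ every $a\in\bar A$ is primitive. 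The paper reads this off structurally: $\ip\hot\ip$ has GK-dimension $3$, hence equals $F(A\odot A)$ by Theorem \ref{xxthm0.2}, and $f\D|_{\ip(1)}=0$ with $f$ as in Example \ref{xxex3.8} --- precisely the ``cleaner picture'' you mention as an aside and then set aside; it is not an alternative but the proof.

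Second, your target conclusion is not a contradiction. Deriving $A\cdot A=0$ contradicts nothing: by Theorem \ref{xxthm0.1} a nonzero finite-dimensional algebra with zero multiplication does correspond to a genuine $\Com$-augmented operad of GK-dimension $2$, and no appeal to the unital structure of $\ip(1)=\Bbbk\1\oplus A$ can upgrade $A^2=0$ to $A=0$, since the unitalization is unital for every $A$. Even once primitivity is secured, multiplicativity of $\D$ on $\ip(1)$ gives only $a\otimes b+b\otimes a=0$ for all $a,b\in\bar A$, which kills $\bar A$ when ${\text{char}}\;\Bbbk\neq 2$ but leaves a one-dimensional $\bar A$ untouched in characteristic $2$. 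The paper closes this with a genuinely operadic computation that has no counterpart in your plan: for $a$ primitive and each $\1_n$ grouplike, $\D\bigl(\varphi^{(2)}_1(a)\ucr{1}\varphi^{(3)}_1(a)\bigr)$ contains the term $\varphi^{(4)}_2(a)\otimes\varphi^{(4)}_1(a)$, produced by the spreading rule $\varphi^{(2)}_1(a)\ucr{1}\1_3=\sum_{k=1}^{3}\varphi^{(4)}_k(a)$ of (C34) --- a phenomenon with no bialgebra analogue --- while the composite itself equals $\varphi^{(4)}_1(a^2)$, whose coproduct has both decorations in the first slot; the uncancellable cross term forces $a=0$ in all characteristics. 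So the gap is not a computation you could grind out along the lines sketched: the mechanisms you propose (coassociativity, the merging multiplication, unitality) do not produce the contradiction, while the ones the proof needs (vanishing of the $f$-term in GK-dimension $2$, grouplikeness of the $\1_n$, and the spreading composition) are absent from the proposal.
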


Corollary \ref{xxcor0.3} motivates the following question.

\begin{question}
\label{xxque0.4}
Is there a $\Com$-augmented Hopf operad of finite GK-dimension 
large than $2$.
\end{question}

This paper is organized as follows. In Section \ref{xxsec1} we 
recall some basic definitions and properties of 2-unitary operads. 
We prove some properties of 2-unitary operads of GK-dimension 3 
in Section \ref{xxsec2}. A key preliminary result is that a 
2-unitary operad of GK-dimension 3 is $\Com$-augmented after 
changing the 2-unit [Proposition \ref{xxpro2.5}]. We define 
a concept of a trident algebra in Section \ref{xxsec3}. In 
Section \ref{xxsec4}, we construct an operad from a trident 
algebra and complete the classification of 2-unitary operads of 
GK-dimension 3 by Theorem \ref{xxthm0.2} and Proposition
\ref{xxpro2.5}. In Section \ref{xxsec5}, we give some comments, 
examples, and remarks. A complete but tedious proof of Theorem 
\ref{xxthm4.1} is given in Appendix (Section \ref{xxsec6}).

\bigskip

\noindent\textbf{Acknowledgments}.
Y.-H. Bao and Y. Ye were partially supported by the Natural
Science Foundation of China (Grant Nos. 11871071, 11971449).
J.J. Zhang was partially supported by the US National Science
Foundation (Grant Nos. DMS-1700825 and DMS-2001015).

\bigskip

\section{Preliminaries}
\label{xxsec1}

Throughout let $\k$ be a base field, and every object is over 
$\Bbbk$. Let $n$ be a nonnegative integer. Set 
$[n]=\{1, 2, \cdots, n\}$ for $n>0$ and $[0]=\varnothing$. We 
use $\S_n$ to denote the symmetric group for $n\ge 0$. By
convention, $\S_0$ is the trivial group. Following the notation
introduced in \cite[Section 8.1]{BYZ}, for each $\sigma\in \S_n$,
we use the sequence  $(i_1, i_2, \cdots, i_n)$ to denote a
permutation $\sigma\in \S_n$ with $\sigma(i_k)=k$ for all
$1\le k\le n$. Denote by $\S$ the disjoint union of all symmetric
group $\S_n$ for all $n\ge 0$. Recall that a $\Bbbk\S$-module 
(or $\S$-module) means a sequence $\{\ip(n)\}_{n\ge 0}$ of right 
$\Bbbk\S_n$-modules, where the right $\S_n$-action on $\ip(n)$ 
is denoted by $\ast$.

In this section, we retrospect some basic facts about operads.

\subsection{Definitions}
\label{xxsec1.1}
From different viewpoints, there are various definitions about
operads. In this paper, we mainly use the {\it partial} definition
and refer to \cite[Chapter 5]{LV} for other versions of the
definition.

\begin{definition} \cite[Section 2.1]{Fr2}
\label{xxdef1.1}
An \emph{operad} $\ip$ consists of the following data:
\begin{enumerate}
\item[(i)]
a $\Bbbk \S$-module $\{\ip(n)\}_{n\geq 0}$, where an element
in $\ip(n)$ is called an \textit{$n$-ary operation}.
\item[(ii)]
an element $\1\in \ip(1)$, which is called the \emph{identity},
\item[(iii)]
for all $m\ge 1, n\ge 0$ and $1\le i\le m$, a
\textit{partial composition}:
\[-\underset{i}{\circ}-\colon \ip(m) \otimes \ip(n) \to \ip(m+n-1)\]
\end{enumerate}
satisfying the following coherence axioms:
\begin{enumerate}
\item[(OP1)]
(Identity) for all $\theta\in \ip(M)$ and all $1\leq i\leq m$,
\[\theta\underset{i}{\circ} \1 = \theta =\1 \underset{1}{\circ} \theta;
\]
\item[(OP2)]
(Associativity) for all $\la\in \ip(l), \mu\in \ip(m)$ and
$\nu\in \ip(n)$,
\begin{align}
\label{E1.1.1}\tag{E1.1.1}
(\la  \underset{i}{\circ} \mu) \underset{i-1+j}{\circ} \nu
=\la \underset{i}{\circ} (\mu \underset{j}{\circ} \nu),
& \quad 1\le i\le l, 1\le j\le m,\\
\label{E1.1.2}\tag{E1.1.2}
(\la  \underset{i}{\circ} \mu) \underset{k-1+m}{\circ} \nu
=(\la \underset{k}{\circ}\nu) \underset{i}{\circ} \mu,
& \quad 1\le i<k\le l;
\end{align}
\item[(OP3)]
(Equivariance)
for all $\mu\in \ip(m), \nu\in \ip(n)$ and $\sigma\in \S_n$,
$\phi\in \S_m$,
\begin{align}
\label{E1.1.3}\tag{E1.1.3}
\mu  \ucr{i} (\nu \ast \sigma)=&(\mu  \ucr{i} \nu)\ast \sigma',\\
\label{E1.1.4}\tag{E1.1.4}
(\mu\ast \phi) \ucr{i} \nu=&(\mu \ucr{\phi(i)} \nu)\ast \phi'',
\end{align}
where  $\sigma'=1_m\ucr{i} \sigma$ and $\phi''=\phi\ucr{i}1_n$
are given by the partial composition in the associative algebra
operad $\As$. We refer to \cite[Section 8]{BYZ} for more details
concerning $\sigma'$ and $\phi''$.
\end{enumerate}
\end{definition}

Let $\ip$ be an operad in the sense of Definition \ref{xxdef1.1}.
Then one can define the composition map by
\begin{equation}
\label{E1.1.5}\tag{E1.1.5}
\lambda \circ (\mu_1, \cdots, \mu_n)=
(\cdots ((\lambda \underset{n}{\circ} \mu_n)
\underset{n-1}{\circ} \mu_{n-1})\underset{n-2}{\circ}
\mu_{n-2}\cdots) \underset{1}{\circ}\mu_1
\end{equation}
for all $\lambda\in \ip(n)$ and $\mu_i\in \ip$ and for
$1\leq i\leq n$ \cite[Remark 1.3]{BYZ}.

\begin{example}\cite[Section 5.2.10]{LV}
\label{xxex1.2}
Let $\Com$ denote the commutative algebra operad. The space of
$n$-ary operations of $\Com$ is $\Com(n)=\Bbbk \1_n$ equipped
with the trivial action of the symmetric group and the partial
composition is given by $\1_{m}\ucr{i} \1_n=\1_{m+n-1}$ for
all $m, n, i$. Note that $\1_1$ is the identity $\1$ of $\Com$.
\end{example}

\begin{example}\cite[Section 13.8]{LV}
\label{xxex1.3}
Suppose that $\Mag$ is the operad generated by the $\S$-module
$$(\Bbbk\mu, \Bbbk\1, \Bbbk\S_2\nu, 0, 0,\cdots)$$
and subject to  relations
$$\nu  \ucr{i} \mu = \1, (i=1, 2),$$
where $\Bbbk\S_2\nu$ is the regular $\Bbbk\S_2$-module with the basis
$\nu$. In this paper we use $\1_0$ for $\mu$ and $\1_2$ for $\nu$.
\end{example}

\begin{definition}\cite[Chapter 5]{LV}
\label{xxex1.4}
Let $\ip$ and $\ip'$ be operads. A {\it morphism} from
$\ip$ to $\ip'$ is a sequence of $\Bbbk\S_n$-homomorphism
$\gamma=(\gamma_n:\ip(n)\to \ip'(n))_{n\geq 0}$, satisfying
\[\gamma(\1)=\1'\quad {\rm and} \quad \gamma(\mu\ucr{i}\nu)
=\gamma(\mu)\ucr{i}\gamma(\nu),\]
where $\1$ and $\1'$ are identities of $\ip$ and $\ip'$,
respectively, and $\mu, \nu\in \ip$. Let $\Op$ denote the
category of operads.
\end{definition}

Next we collect some definitions given in \cite{BYZ, Fr2}.

\begin{definition}
\label{xxdef1.5}
\begin{enumerate}
\item[(1)]
An operad $\ip$ is called {\it unitary} if $\ip(0)= \Bbbk \1_0$,
where $\1_0$ is a basis of $\ip(0)$, and is called a
\textit{$0$-unit}. The category of unitary operads is denoted
by $\mathrm{Op_+}$, in which morphisms are operadic morphisms
preserving 0-units.
\item[(2)]
A unitary operad is said to be \emph{connected}, if $\ip(1)
=\Bbbk \1$ where $\1$ is the identity of $\ip$. In this case we
also use $\1_1$ for $\1$.
\item[(3)]
A \textit{2-unitary operad} is a unitary operad $\ip$ equipped
with a morphism $\Mag \to \ip$ in $\mathrm{Op_+}$, where $\Mag$
is the unital magmatic operad [Example \ref{xxex1.3}], or
equivalently, there is an element $\1_2\in \ip(2)$
(called a 2-unit) such that
\begin{equation}
\label{E1.5.1}\tag{E1.5.1}
\1_2\ucr{1} \1_0=\1(=\1_1)=\1_2\ucr{2} \1_0
\end{equation}
 where $\1_0$ is a 0-unit of $\ip$.
\item[(4)]
A \textit{2a-unitary} operad is a unitary $\ip$ equipped with
a morphism $\As \to \ip$ in $\mathrm{Op_+}$, or equivalently,
$\ip$ is 2-unitary with a 2-unit $\1_2$ satisfying
\begin{equation}
\label{E1.5.2}\tag{E1.5.2}
\1_2\underset{1}\circ \1_2=\1_2\underset{2}\circ \1_2.
\end{equation}
In this case $\1_2$ is called {\it $2a$-unit}. 
\item[(5)]
An operad $\ip$ is called {\it $\Com$-augmented} if there is a
morphism from $\Com\to \ip$. It is clear that $\Com$-augmented
operads are $2a$-unitary. In this case $\1_2\ast (2,1)=\1_2$
and $\1_2$ is called a {\it symmetric $2a$-unit}.
\end{enumerate}
\end{definition}

Let $\ip$ and $\ip'$ be 2-unitary operads. A morphism of 2-unitary
operads is a morphism $\gamma :\ip \to \ip'$ in $\mathrm{Op_+}$
satisfying the following commutative diagram
\[\xymatrix{
& \Mag\ar[dl]\ar[dr]& \\
\ip\ar[rr]^{\gamma} && \ip'
}\]
or equivalently, the operad morphism preserves the 2-unit. The
categories of 2-unitary operads, 2a-unitary operads and
$\Com$-augmented operads, are denoted by
$\Mag \downarrow \mathrm{Op_+}$, $\As \downarrow \mathrm{Op_+}$
and $\Com \downarrow \mathrm{Op_+}$, respectively.
Let $\dim$ denote $\dim_{\Bbbk}$.

\begin{definition}
\label{xxdef1.6}
Let $\ip=(\ip(n))_{n\ge 0}$ be a locally finite operad, i.e.
$\dim \ip(n)<\infty$ for all $n\ge 0$.
\begin{enumerate}
\item[(1)]
The {\it generating series} of $\ip$ is defined to be
$$G_{\ip}(t)=\sum_{n=0}^{\infty} \dim \ip(n) t^n \in {\mathbb Z}[[t]].$$
\item[(2)]
The {\it Gelfand-Kirillov dimension} ({\it GK-dimension} for short) of $\ip$
is defined to be
$$\gkdim (\ip)=\limsup_{n\to\infty} \log_{n} (\sum_{i=0}^{n} \dim \ip(i)).$$
\end{enumerate}
\end{definition}

\subsection{Truncation Ideals}
\label{xxsec1.2}

Let $\ip$ be a unitary operad and $I$ a subset of $[n]$. Recall
that a {\it restriction operator} \cite[Section 2.2.1]{Fr2} means
\begin{equation}
\label{E1.6.1}\tag{E1.6.1}
\pi^{I}\colon \ip(n)\to\ip(s),\qquad \pi^{I}(\theta)=
\theta\circ(\1_{\chi_{I}(1)}, \cdots, \1_{\chi_{I}(n)})
\end{equation}
for all $\theta\in \ip(n)$, where $\chi_{I}$ is the characteristic
function of $I$, i.e. $\chi_{I}(x)=1$ for $x\in I$ and
$\chi_{I}(x)=0$ otherwise. Note that $\circ$ is defined in
\eqref{E1.1.5}. If $I=\{i_1, \cdots, i_s\}\subset [n]$ with
$1\le i_1<\cdots<i_s\le n$, we also denote $\pi^I$ as
$\pi^{i_1, \cdots, i_s}$. We refer to \cite[Section 2.3]{BYZ}
for more details.

For integers $k\ge 1$, the $k$-th truncation ideals $\up{k}$ is
defined by
\begin{equation}
\label{E1.6.2}\tag{E1.6.2}
^k\iu_\ip(n) =
\bigcap\limits_{I\subset \n,\, |I|\le k-1} \ker\pi^I
=\begin{cases}
\bigcap\limits_{I\subset \n,\, |I|= k-1} \ker\pi^I,
   & \text{if }n\ge k;\\
\quad\ \ 0, & \text{otherwise}.
\end{cases}
\end{equation}
By convention, let ${^0\iu_\ip}=\ip$. If no confusion, we
write ${^k\iu}={^k\iu_\ip}$ for brevity.

For every subset $I=\{i_1, \cdots, i_s\}\subset [n]$ with
$i_1<\cdots<i_s$, we denote a permutation
\[c_I\colon=(i_1, \cdots, i_s, 1, \cdots, i_1-1, i_1+1,
\cdots, i_s-1, i_s+1, \cdots, n)\in \S_n.\]

\begin{theorem}\cite[Theorem 4.6]{BYZ}
\label{xxthm1.7}
Let $\ip$ be a $2$-unitary operad. For each $k\ge 0$, let
\[\Theta^k\colon =\{\theta_1^k, \cdots, \theta_{z_k}^k\}\]
be a $\Bbbk$-basis for $\up{k}(k)$. Let $\textbf{B}_k(n)$
be the set
\[\{\1_2\circ (\theta_i^k, \1_{n-k})\ast c_I
\mid 1\le i\le z_k, I\subset [n], |I|=k.\}\]
Then $\ip(n)$ has a $\Bbbk$-basis
\[\bigcup\limits_{0\le k\le n} \textbf{B}_k(n)
=\{\1_n\}\cup \bigcup\limits_{1\le k\le n} \textbf{B}_k(n),\]
and for every $k\ge 1$, $\up{k}(n)$ has a $\Bbbk$-basis
$\bigcup_{k\le i\le n} \textbf{B}_i(n)$.
\end{theorem}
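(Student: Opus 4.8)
The plan is to prove both assertions at once by reading off the successive quotients of the decreasing filtration by truncation ideals
\[\ip(n)=\up{0}(n)\supseteq\up{1}(n)\supseteq\cdots\supseteq\up{n}(n)\supseteq\up{n+1}(n)=0.\]
Writing $b_{I,\theta}:=\1_2\circ(\theta,\1_{n-k})\ast c_I$ for $\theta\in\up{k}(k)$ and $I\subseteq[n]$ with $|I|=k$ (so that $b_{I,-}$ is linear in $\theta$ and $\textbf{B}_k(n)=\{b_{I,\theta_i^k}\}$), the whole argument rests on two restriction identities: for $|I|=|J|=k$,
\[\pi^{J}(b_{I,\theta})=\begin{cases}\theta,& J=I,\\ 0,& J\neq I,\end{cases}\]
and, more generally, $\pi^{J}(b_{I,\theta})=0$ whenever $I\not\subseteq J$.

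The first step, and the one demanding the most care, is to establish these identities. I would first record the elementary compatibilities of restriction operators in a unitary operad (available from \cite[Section 2.2]{Fr2} and \cite[Section 2.3]{BYZ}): that $\pi^{J'}\circ\pi^{I}=\pi^{I_{J'}}$, where $I_{J'}\subseteq I$ is the subset matching $J'$ under the order isomorphism $I\cong[k]$; that $\pi^{J}$ is equivariant, so $\pi^{J}(x\ast\sigma)$ equals $\pi^{J'}(x)\ast\sigma'$ for the transported subset $J'$ and an induced permutation $\sigma'$ coming from \eqref{E1.1.3}--\eqref{E1.1.4}; and that $\pi^{J}$ distributes over partial composition. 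Granting these, computing $\pi^{J}(b_{I,\theta})$ reduces to plugging $\1$ into the slots of $J$ and $\1_0$ elsewhere into $\1_2\circ(\theta,\1_{n-k})$ after transporting $J$ through $c_I$. The permutation $c_I$ is chosen precisely so that it carries $I$ onto $\{1,\dots,k\}$ in an order-preserving way, i.e.\ onto the slots feeding $\theta$; hence if $I\not\subseteq J$ some input of $\theta$ receives $\1_0$, yielding a restriction $\pi^{I'}(\theta)$ with $|I'|<k$, which vanishes because $\theta\in\up{k}(k)\subseteq\ker\pi^{I'}$. When $J=I$, every input of $\theta$ receives $\1$ while all $n-k$ inputs of $\1_{n-k}$ receive $\1_0$, collapsing $\1_{n-k}$ to $\1_0\in\ip(0)$; then $\1_2\circ(\theta,\1_0)=(\1_2\ucr{2}\1_0)\ucr{1}\theta=\1\ucr{1}\theta=\theta$ by \eqref{E1.5.1}, and the order-preservation of $c_I$ on $I$ leaves no residual permutation, giving $\theta$ on the nose. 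The same collapse with $\theta=\1_0$ and $I=\varnothing$ shows $\textbf{B}_0(n)=\{\1_n\}$.

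With these identities the two global statements follow formally. First, $\textbf{B}_k(n)\subseteq\up{k}(n)$, since $|J|\le k-1$ forces $I\not\subseteq J$ and hence $\pi^{J}(b_{I,\theta})=0$. For spanning I will prove $\up{k}(n)\subseteq\mathrm{span}\,\textbf{B}_k(n)+\up{k+1}(n)$ and then descend from $\up{n+1}(n)=0$. Given $x\in\up{k}(n)$, the relation $\pi^{J'}\pi^{I}=\pi^{I_{J'}}$ shows that each $\pi^{I}(x)$ with $|I|=k$ already lies in $\up{k}(k)$, because its restrictions to size $<k$ are restrictions of $x$ to size $<k$, hence zero. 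Setting $y:=\sum_{|I|=k}b_{I,\pi^{I}(x)}\in\mathrm{span}\,\textbf{B}_k(n)$, the key identities give $\pi^{J}(y)=\pi^{J}(x)$ for every $|J|=k$; thus $x-y\in\up{k}(n)$ is annihilated by all restrictions of size $\le k$, i.e.\ $x-y\in\up{k+1}(n)$. Downward induction then yields $\up{k}(n)=\sum_{i\ge k}\mathrm{span}\,\textbf{B}_i(n)$, and $k=0$ shows $\bigcup_k\textbf{B}_k(n)$ spans $\ip(n)$.

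Linear independence comes from the same identities. In a relation $\sum_{k,I,\theta}a_{k,I,\theta}\,b_{I,\theta}=0$, let $k_0$ be the least index with a nonzero coefficient; all terms of index $>k_0$ lie in $\up{k_0+1}(n)$, so applying $\pi^{I_0}$ for each $|I_0|=k_0$ kills them and, by the key identities, leaves $\sum_{\theta}a_{k_0,I_0,\theta}\,\theta=0$ in $\up{k_0}(k_0)$. As $\Theta^{k_0}$ is a basis, all these coefficients vanish, contradicting minimality. Hence $\bigcup_{0\le k\le n}\textbf{B}_k(n)$ is a $\Bbbk$-basis of $\ip(n)$, and intersecting the spanning and independence statements with the filtration shows $\bigcup_{k\le i\le n}\textbf{B}_i(n)$ is a $\Bbbk$-basis of $\up{k}(n)$. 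The crux of the proof is the bookkeeping behind the two restriction identities --- correctly composing $\pi^{J}$ with the symmetric action $\ast c_I$ and with the partial composition defining $b_{I,\theta}$ --- after which the filtration, reconstruction, and independence arguments are purely formal.
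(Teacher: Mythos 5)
The paper itself gives no proof of this theorem: it is imported verbatim from \cite[Theorem 4.6]{BYZ}, and your argument is essentially the proof from that source — the orthogonality relations $\pi^{J}\bigl(\1_2\circ(\theta,\1_{n-k})\ast c_I\bigr)=\delta_{I,J}\,\theta$ for $\theta\in\up{k}(k)$ with $|I|=|J|=k$ (and vanishing whenever $I\not\subseteq J$), then downward induction along the truncation filtration $\up{k}(n)\subseteq \mathrm{span}\,\textbf{B}_k(n)+\up{k+1}(n)$ for spanning and a minimal-index restriction argument for independence. Your proposal is correct as written, including the key bookkeeping points (compatibility of $\pi^J$ with $\ast\,c_I$ and the collapse $\1_2\circ(\theta,\1_0)=\theta$ via \eqref{E1.5.1}), so there is nothing to fix.
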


\begin{lemma}\cite[Lemma 5.2]{BYZ}
\label{xxlem1.8}
Let $\ip$ be a $2$-unitary operad and $f_\ip(k)=\dim \up{k}(k)$
for each $k\ge 0$. Then
\begin{enumerate}
\item[(1)]
$G_\ip(t)=\sum\limits_{k=0}^\infty f_\ip(k)\dfrac{t^k}{(1-t)^{k+1}}$.
\item[(2)]
$\gkdim \ip=\max\{k\mid f_\ip(k)\neq 0\}+1=\min\{k\mid \up{k}=0\}$.
\end{enumerate}
\end{lemma}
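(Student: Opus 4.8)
The plan is to derive both statements from the explicit $\Bbbk$-basis of $\ip(n)$ supplied by Theorem \ref{xxthm1.7}. The first step is a dimension count. For fixed $k$ with $0\le k\le n$, the set $\textbf{B}_k(n)$ is indexed by pairs $(i,I)$ with $1\le i\le z_k$ and $I\subset[n]$, $|I|=k$; since these elements belong to a $\Bbbk$-basis of $\ip(n)$ they are in particular pairwise distinct, so $|\textbf{B}_k(n)|=z_k\binom{n}{k}=f_\ip(k)\binom{n}{k}$, using $z_k=\dim\up{k}(k)=f_\ip(k)$. As the union $\bigcup_{0\le k\le n}\textbf{B}_k(n)$ is a basis (hence its members are counted without repetition across different $k$), I obtain the master identity
\[\dim\ip(n)=\sum_{k=0}^{n} f_\ip(k)\binom{n}{k}=\sum_{k=0}^{\infty} f_\ip(k)\binom{n}{k},\]
the second equality holding since $\binom{n}{k}=0$ for $k>n$. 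Everything else is a formal consequence of this.

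For part (1), I would substitute the master identity into the definition of $G_\ip(t)$ and interchange the two summations,
\[G_\ip(t)=\sum_{n=0}^{\infty}\Big(\sum_{k=0}^{n} f_\ip(k)\binom{n}{k}\Big)t^n=\sum_{k=0}^{\infty} f_\ip(k)\sum_{n=k}^{\infty}\binom{n}{k}t^n,\]
which is legitimate in $\mathbb{Z}[[t]]$ because each fixed power $t^n$ receives only finitely many contributions. It then suffices to invoke the standard power-series identity $\sum_{n\ge k}\binom{n}{k}t^n=t^k/(1-t)^{k+1}$ (equivalently $(1-t)^{-(k+1)}=\sum_{m\ge0}\binom{m+k}{k}t^m$), which yields $G_\ip(t)=\sum_{k\ge0}f_\ip(k)\,t^k/(1-t)^{k+1}$.

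For part (2), set $d:=\max\{k\mid f_\ip(k)\neq0\}$; this is well defined with $d\ge0$ since $f_\ip(0)=\dim\ip(0)=1$. I would first read off the rightmost equality directly from Theorem \ref{xxthm1.7}: because $\up{k}(n)$ has basis $\bigcup_{k\le i\le n}\textbf{B}_i(n)$, the ideal $\up{k}$ vanishes exactly when $f_\ip(i)=0$ for every $i\ge k$, whence $\min\{k\mid\up{k}=0\}=d+1$. For the GK-dimension, when $d<\infty$ the master identity presents $\dim\ip(n)$ as a polynomial in $n$ of degree exactly $d$, with positive leading coefficient $f_\ip(d)/d!$; consequently the partial sums $\sum_{i=0}^{n}\dim\ip(i)$ grow like a positive constant times $n^{d+1}$, and applying $\limsup_n\log_n(-)$ gives $\gkdim\ip=d+1$. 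The case $d=\infty$ (infinitely many nonzero $f_\ip(k)$) should be treated in parallel: both $\max\{k\mid f_\ip(k)\neq0\}$ and $\min\{k\mid\up{k}=0\}$ equal $+\infty$, and the master identity forces superpolynomial growth of $\dim\ip(n)$, so $\gkdim\ip=\infty$ consistently.

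Since Theorem \ref{xxthm1.7} already carries the genuine operadic content, I expect no serious conceptual obstacle; the argument is essentially bookkeeping organized around the master identity. The only points that need care are the exactness of the count $|\textbf{B}_k(n)|=f_\ip(k)\binom{n}{k}$ and the absence of overlaps across different $k$ — both guaranteed by the word ``basis'' in Theorem \ref{xxthm1.7} — the formal justification of the summation interchange, and the uniform handling of the finite and infinite GK-dimension cases.
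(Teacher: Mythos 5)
Your proposal is correct and takes essentially the same route as the source the paper relies on: the paper gives no proof of this lemma beyond citing \cite{BYZ}, and the intended argument is exactly your master identity $\dim\ip(n)=\sum_{k\ge 0}f_\ip(k)\binom{n}{k}$ read off from the basis of Theorem \ref{xxthm1.7}, followed by the summation interchange and the identity $\sum_{n\ge k}\binom{n}{k}t^n=t^k/(1-t)^{k+1}$ for (1), and the polynomial-degree count for (2) --- precisely the computation the paper itself reuses in Section \ref{xxsec2} to obtain $\dim\ip(n)=1+dn+m\binom{n}{2}$. Your attention to the distinctness of the indexed basis elements, to the vanishing criterion $\up{k}=0\Leftrightarrow f_\ip(i)=0$ for all $i\ge k$, and to the uniform treatment of the $d=\infty$ case is appropriate, and I see no gap.
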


Combining Lemma \ref{xxlem1.8}(2) with \cite[Proposition 0.5]{BYZ}, 
if $\ip$ is $2$-unitary, then there is a canonical morphism
of $2$-unitary operads
\begin{equation}
\label{E1.8.1}\tag{E1.8.1}
\epsilon: \quad
\ip \longrightarrow \ip/{^1\iu} =\Com.
\end{equation}

\section{Basic Facts of 2-unitary Operads of GK-dimension 3}
\label{xxsec2}

Let $\ip$ be a 2-unitary operad of GK-dimension 3. By Theorem
\ref{xxthm1.7} and Lemma \ref{xxlem1.8}, we have the following
basic facts:

\begin{enumerate}
\item[(1)]
${^2\iu}\neq 0$, and ${^k\iu}=0$ for all $k\ge 3$.
\item[(2)]
$\dim \ip(n)=1+dn+m\dfrac{n(n-1)}{2}$, where
$d:=f_{\ip}(1)=\dim {^1\iu}(1)$ and $m:=f_\ip(2)=\dim {^2\iu}(2)$.
\item[(3)]
$G_\ip(t)=\dfrac{1}{1-t}+d\dfrac{t}{(1-t)^2}+m\dfrac{t^2}{(1-t)^3}$.
\end{enumerate}

Based on the above facts, we have the following lemmas, which is
useful to understand the structure of a 2-unitary operad of
GK-dimension 3.

\begin{lemma}
\label{xxlem2.1}
Let $\ip$ be a $2$-unitary operad of GK-dimension $3$ and $\1_2$ a $2$-unit.
Then for all $\tau, \mu\in \up{2}(2)$,
\begin{align}
\tau \ucr{1} \1_2=& \1_2\ucr{2} \tau+(\1_2\ucr{2} \tau)\ast (12),
\label{E2.1.1}\tag{E2.1.1}\\
\tau \ucr{2} \1_2=& \1_2\ucr{1} \tau+(\1_2\ucr{1} \tau)\ast (23).
\label{E2.1.2}\tag{E2.1.2}\\
\tau \ucr{i} \mu=& 0 \quad (i=1, 2).
\label{E2.1.3}\tag{E2.1.3}
\end{align}
\end{lemma}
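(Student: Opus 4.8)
The plan is to exploit the basis from Theorem \ref{xxthm1.7} together with the vanishing ${^3\iu}=0$ recorded in basic fact (1). The crucial structural idea is that any composite of operations lying in the truncation ideals can be reexpressed in the canonical basis, and the truncation degree (the smallest $k$ for which an element survives $\pi^I$ for $|I|=k-1$) behaves additively enough that compositions landing in $\up{3}$ or higher must vanish. So first I would compute, for $\tau\in\up{2}(2)$, where each of the composites $\tau\ucr{1}\1_2$, $\tau\ucr{2}\1_2$, $\1_2\ucr{i}\tau$, and $\tau\ucr{i}\mu$ lives with respect to the truncation filtration, by applying the restriction operators $\pi^I$ and using equivariance (OP3) to track the symmetric group action.

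For \eqref{E2.1.3}, the approach is cleanest: I would argue that $\tau\ucr{i}\mu$ is a ternary operation lying in $\up{3}(3)$. To see this, I would apply each restriction operator $\pi^{j,k}$ with $|\{j,k\}|=2$ and show the image is zero, using that $\tau,\mu\in\up{2}(2)$ means $\pi^{j}(\tau)=\pi^j(\mu)=0$ for singletons, together with the associativity axiom (OP2) to commute the restriction past the partial composition. Since the composite has arity $2+2-1=3$ and GK-dimension $3$ forces $\up{3}=0$, this gives $\tau\ucr{i}\mu=0$ directly once I verify it lands in $\up{3}(3)$.

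For \eqref{E2.1.1} and \eqref{E2.1.2}, which are the substantive identities, I would first check that $\tau\ucr{1}\1_2$ is a ternary element of $\up{2}(3)$ (it cannot be in $\up{3}$ by the arity/dimension count, and applying $\pi^I$ for $|I|=1$ kills it because $\tau\in\up{2}(2)$). Then I would expand both sides in the explicit basis $\textbf{B}_2(3)$ furnished by Theorem \ref{xxthm1.7}, which consists of elements $\1_2\circ(\theta,\1_1)\ast c_I$ with $I$ ranging over the three $2$-subsets of $[3]$. Matching coefficients amounts to computing the restriction operators $\pi^{J}$ of each side for the various $2$-subsets $J$ and using the $0$-unit relations \eqref{E1.5.1} for $\1_2$ together with equivariance. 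The symmetry $(12)$ appearing on the right should emerge from the fact that $\tau\ucr{1}\1_2$, viewed via its restrictions, is symmetric in the two inputs that get merged by $\1_2$, while the outer input is distinguished.

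The main obstacle I anticipate is the bookkeeping of symmetric group elements: translating the permutations $c_I$ and the induced actions $\sigma'$, $\phi''$ from the equivariance axioms \eqref{E1.1.3}--\eqref{E1.1.4} into the concrete transpositions $(12)$ and $(23)$ on the right-hand sides. One must be careful that $\tau$ and $\mu$ are \emph{not} assumed symmetric (they lie in $\up{2}(2)$, on which $\S_2$ acts possibly nontrivially), so the correct identity genuinely involves the symmetrized term $(\1_2\ucr{2}\tau)\ast(12)$ rather than a naive factor of $2$. Verifying that the restriction operators separate the basis elements cleanly, and that no hidden $\up{3}$ contribution has been dropped, will require the careful application of the associativity relations (OP2) in the order dictated by \eqref{E1.1.5}, and I would organize this computation by fixing the basis $\Theta^2$ of $\up{2}(2)$ and reducing everything to linear identities among the finitely many basis vectors of $\ip(3)$.
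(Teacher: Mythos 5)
Your proposal is correct and follows essentially the same route as the paper: the paper's proof likewise shows that the difference of the two sides of each identity is annihilated by every composition $-\ucr{i}\1_0$ (equivalently, by every restriction operator $\pi^I$ with $|I|=2$), hence lies in $\up{3}(3)$, which vanishes since $\gkdim\ip=3$. Your additional step of expanding both sides in the basis $\textbf{B}_2(3)$ of Theorem \ref{xxthm1.7} is harmless but redundant scaffolding, since the separation property you invoke is exactly the statement $\up{3}(3)=\bigcap_{|I|=2}\ker\pi^I=0$ that the paper uses directly.
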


\begin{proof} By a direct calculation, we have
\[(\tau \ucr{1} \1_2-\1_2\ucr{2} \tau
  -(\1_2\ucr{2} \tau)\ast (12))\ucr{i}\1_0=0\]
for all $1\le i\le 3$. It follows that
\[\tau \ucr{1} \1_2-\1_2\ucr{2} \tau
  -(\1_2\ucr{2} \tau)\ast (12)\in \up{3}(3).\]
Since $\ip$ is of GK-dimension 3 and $\up{3}(3)=0$,
Equation \eqref{E2.1.1} holds. Similarly, \eqref{E2.1.2} and 
\eqref{E2.1.3} hold.
\end{proof}

Let $\ip$ be a $2$-unitary operad with a $2$-unit $\1_2$. By
convention, we define $\1'_n=\1_n$ for $n=0,1,2$.  Recall from
\cite[Section 2]{BYZ} that, for every $n\ge 3$, one can
define inductively that
\[\1_n=\1_2\ucr{1} \1_{n-1}, \ \
{\rm and}\ \ \1'_n=\1_2\ucr{2} \1'_{n-1}.\]
By Definition \ref{xxdef1.5}(4) a 2-unitary operad is called
2a-unitary if $\1_2$ is associative, or equivalently $\1_3=\1'_3$.

\begin{lemma}
\label{xxlem2.2}
Let $\ip$ be a 2-unitary operad of GK-dimension 3 with 2-unit $\1_2$. 
Then $\1_2$ is a 2a-unit. Moreover, if $\1_2$ is a 2a-unit, then so is 
$\1_2+\tau$ for any $\tau\in \up{2}(2)$.
\end{lemma}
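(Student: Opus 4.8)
The plan is to derive both assertions from a single mechanism: in GK-dimension $3$ one has $\up{3}(3)=0$, and membership in $\up{3}(3)$ is detected by composing with the $0$-unit in each slot. Concretely, by Lemma \ref{xxlem1.8}(2) the hypothesis $\gkdim\ip=3$ forces $\up{3}=0$, hence $\up{3}(3)=0$; and from the definition \eqref{E1.6.2} of the truncation ideal I would first record the identification
\[
\up{3}(3)=\{\theta\in\ip(3)\mid \theta\ucr{i}\1_0=0 \text{ for all } i=1,2,3\}.
\]
This is immediate once one checks that for $\theta\in\ip(3)$ and a two-element $I\subset[3]$ the restriction operator $\pi^I$ merely inserts $\1_0$ in the slot outside $I$: explicitly $\pi^{\{1,2\}}(\theta)=\theta\ucr{3}\1_0$, $\pi^{\{1,3\}}(\theta)=\theta\ucr{2}\1_0$, $\pi^{\{2,3\}}(\theta)=\theta\ucr{1}\1_0$, the two inserted copies of $\1_1$ acting trivially by (OP1).

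The core computation is to show that for \emph{any} $2$-unit $\eta$ (i.e.\ $\eta\ucr{1}\1_0=\1=\eta\ucr{2}\1_0$) the two composites $\eta\ucr{1}\eta$ and $\eta\ucr{2}\eta$ in $\ip(3)$ agree after composing with $\1_0$ in every slot. For $\eta\ucr{1}\eta$ I would compute $(\eta\ucr{1}\eta)\ucr{i}\1_0$: for $i=1,2$ the associativity axiom \eqref{E1.1.1} pushes $\1_0$ into the inner factor, giving $\eta\ucr{1}(\eta\ucr{i}\1_0)=\eta\ucr{1}\1=\eta$, while for $i=3$ the parallel axiom \eqref{E1.1.2} gives $(\eta\ucr{2}\1_0)\ucr{1}\eta=\1\ucr{1}\eta=\eta$. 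The symmetric three steps for $\eta\ucr{2}\eta$ (using \eqref{E1.1.2} when $i=1$ and \eqref{E1.1.1} when $i=2,3$) again yield $\eta$ in all cases. Hence $\eta\ucr{1}\eta-\eta\ucr{2}\eta\in\up{3}(3)=0$, so $\eta\ucr{1}\eta=\eta\ucr{2}\eta$. Recalling $\1_3=\1_2\ucr{1}\1_2$ and $\1'_3=\1_2\ucr{2}\1_2$, this says exactly that every $2$-unit is a $2a$-unit, and taking $\eta=\1_2$ settles the first assertion.

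For the second assertion I would first verify that $\1_2+\tau$ is again a $2$-unit: since $\tau\in\up{2}(2)$ we have $\tau\ucr{1}\1_0=0=\tau\ucr{2}\1_0$ (these are precisely the restriction operators $\pi^{\{2\}},\pi^{\{1\}}$ defining $\up{2}(2)$), so $(\1_2+\tau)\ucr{i}\1_0=\1$ for $i=1,2$. Applying the previous paragraph to the $2$-unit $\eta=\1_2+\tau$ then shows at once that $\1_2+\tau$ is a $2a$-unit, as claimed.

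The only genuine subtlety I anticipate is bookkeeping: in each of the six evaluations $(\eta\ucr{a}\eta)\ucr{i}\1_0$ one must select the correct associativity axiom and track the shifted index $k-1+m$. The more tempting but trickier route to the second assertion—expanding $(\1_2+\tau)\ucr{1}(\1_2+\tau)-(\1_2+\tau)\ucr{2}(\1_2+\tau)$ bilinearly and feeding in the relations \eqref{E2.1.1}, \eqref{E2.1.2}, \eqref{E2.1.3} of Lemma \ref{xxlem2.1}—cancels the diagonal and $\tau\ucr{i}\tau$ terms but leaves one needing to reconcile the permuted residues $(\1_2\ucr{2}\tau)\ast(12)$ and $(\1_2\ucr{1}\tau)\ast(23)$, which is appreciably messier. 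The advantage of the $\up{3}(3)=0$ argument is that it regards $\1_2+\tau$ as a single $2$-unit and sidesteps these cross terms entirely; this is where I would expect to spend the least effort and make the fewest errors.
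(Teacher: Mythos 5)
Your proposal is correct, and it rests on the same mechanism as the paper's proof: in GK-dimension $3$ one has $\up{3}(3)=0$ (Lemma \ref{xxlem1.8}(2)), and membership in $\up{3}(3)$ is tested by the three compositions $-\ucr{i}\1_0$; your six evaluations via \eqref{E1.1.1} and \eqref{E1.1.2} are exactly the check the paper leaves implicit behind ``one can easily check'' in the first part, and all index choices you make are right. Where you genuinely diverge is the second assertion: the paper expands $(\1_2+\tau)\ucr{i}(\1_2+\tau)$ bilinearly, invokes \eqref{E2.1.3} of Lemma \ref{xxlem2.1} to kill the $\tau\ucr{i}\tau$ terms, and then runs a second membership computation to show $\1_2\ucr{1}\tau+\tau\ucr{1}\1_2-\1_2\ucr{2}\tau-\tau\ucr{2}\1_2\in\up{3}(3)=0$, whereas you prove the first assertion once for an \emph{arbitrary} $2$-unit $\eta$ and simply apply it to $\eta=\1_2+\tau$, after the (correct) observation that $\tau\ucr{1}\1_0=\tau\ucr{2}\1_0=0$ because these compositions are the restriction operators $\pi^{\{2\}},\pi^{\{1\}}$ cutting out $\up{2}(2)$. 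This is legitimate and involves no circularity: under the lemma's standing GK-dimension-$3$ hypothesis the clause ``if $\1_2$ is a $2a$-unit'' is automatic, so your uniform statement subsumes both claims while bypassing Lemma \ref{xxlem2.1} and the cross-term bookkeeping entirely. What the paper's route buys is the explicit intermediate identity $\1_2\ucr{1}\tau+\tau\ucr{1}\1_2=\1_2\ucr{2}\tau+\tau\ucr{2}\1_2$, in the spirit of \eqref{E2.1.1}--\eqref{E2.1.2}; since nothing later in the paper uses it in that form, your shortcut loses nothing, and your own assessment that the bilinear-expansion route is the messier of the two matches how the paper actually proceeds.
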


\begin{proof} Suppose that $\1_2$ is a $2$-unit of $\ip$. By 
definition, $\1_3=\1_2\ucr{1} \1_2$ and $\1'_3=\1_2\ucr{2} \1_2$.
One can easily check that $(\1_3-\1'_3) \ucr{i} \1_0=0$ for all
$i=1,2,3$. This means that $\1_3-\1'_3\in \up{3}(3)$. Since $\ip$ is of
GK-dimension 3, $\up{3}(3)=0$. Thus $\1_3=\1'_3$ as required.

Clearly,  for any $\tau\in \up{2}(2)$, we have
\[(\1_2+\mu)\ucr{i} \1_0=\1_2\ucr{i} \1_0=\1_1\]
for $i=1, 2$. So $\1_2+\mu$ is a 2-unit. Moreover, 
by Lemma \ref{xxlem2.1} \eqref{E2.1.3}, we have
\begin{align*}
(\1_2+\tau)\ucr{i} (\1_2+\tau)=\1_2\ucr{i}\1_2+\tau\ucr{i}\1_2+\1_2\ucr{i}\tau
\end{align*}
for $i=1, 2$. Since $\1_2$ is a 2a-unit, $\1_2\ucr{1}\1_2=\1_2\ucr{2}\1_2$.
By direct computation, we have
\[(\1_2\ucr{1}\tau +
\tau\ucr{1}\1_2-\1_2\ucr{2}\tau-\tau\ucr{2}\1_2)\ucr{i} \1_0=\tau-\tau=0\]
for $i=1, 2, 3$.
Therefore, 
$(\1_2\ucr{1}\tau +\tau\ucr{1}\1_2-\1_2\ucr{2}\tau-\tau\ucr{2}\1_2)
\in \up{3}(3)$. Since $\ip$ is of GK-dimension $3$ and $\up{3}(3)=0$, 
we know $\1_2\ucr{1}\tau +\tau\ucr{1}\1_2=\1_2\ucr{2}\tau +\tau\ucr{2}\1_2$.
It follows that $\1_2+\tau$ is a 2a-unit.
\end{proof}

\begin{lemma}\cite[Lemma 2.7]{BYZ}
\label{xxlem2.3}
Let $\mathcal{P}$ be a $2a$-unitary operad. Then the following hold.
\begin{enumerate}
\item[$(1)$]
For every $n\ge 3$, $\1_n=\1'_n$.
\item[$(2)$]
For every $n\ge 1$ and $k_1, \cdots, k_n\ge 0$,
$\1_n\circ (\1_{k_1}, \cdots, \1_{k_n})=\1_{k_1+\cdots+k_n}$.
\end{enumerate}
\end{lemma}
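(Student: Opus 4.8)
The plan is to reduce both statements to a single ``one-insertion'' identity and to prove that identity by induction. Precisely, I would first isolate the claim that for all $m\ge 1$, $n\ge 0$ and $1\le i\le m$,
\begin{equation*}
\1_m \ucr{i} \1_n = \1_{m+n-1}. \tag{$\star$}
\end{equation*}
Granting $(\star)$, part (2) is immediate: expanding the total composition via \eqref{E1.1.5} as an iterate of partial compositions $\1_n\ucr{n}\1_{k_n}$, then $\ucr{n-1}\1_{k_{n-1}}$, and so on, each step replaces the current operation $\1_M$ by $\1_{M+k_j-1}$ by $(\star)$, and after all $n$ insertions the arity telescopes to $k_1+\cdots+k_n$. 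Likewise part (1) is the special case $m=2$, $i=2$ of $(\star)$: once $\1_2\ucr{2}\1_{n-1}=\1_n$ is known, the definition $\1'_n=\1_2\ucr{2}\1'_{n-1}$ together with the inductive identity $\1'_{n-1}=\1_{n-1}$ gives $\1'_n=\1_n$. So everything rests on $(\star)$, and the subtle point is that its two boundary cases (insertion into the first or the last slot) are not symmetric a priori, since $\1_n$ is defined by left-nesting.

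Thus I would prove part (1) first and independently, by induction on $n$, as it is exactly the input needed to handle the last-slot case of $(\star)$. The base cases $n\le 2$ are direct from \eqref{E1.5.1}, (OP1), and the associativity \eqref{E1.5.2} of the $2a$-unit. For $n\ge 3$ I would rewrite $\1_2\ucr{2}\1_n$ using $\1_n=\1_2\ucr{1}\1_{n-1}$ and push the outer composition inward by the operadic associativity \eqref{E1.1.1}, obtaining $\1_2\ucr{2}\1_n=(\1_2\ucr{2}\1_2)\ucr{2}\1_{n-1}$; replacing $\1_2\ucr{2}\1_2$ by $\1_2\ucr{1}\1_2=\1_3$ via \eqref{E1.5.2} and re-expanding by \eqref{E1.1.1} gives $\1_2\ucr{1}(\1_2\ucr{2}\1_{n-1})$, at which point the induction hypothesis $\1_2\ucr{2}\1_{n-1}=\1_n$ finishes the chain as $\1_2\ucr{1}\1_n=\1_{n+1}$. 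This closes the induction and yields $\1_n=\1'_n$ for all $n$.

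With part (1) in hand I would establish $(\star)$ by induction on the \emph{arity of the output} $N=m+n-1$ rather than on $m$, which is what makes the induction close. For $m=1$ the identity is (OP1). For $m\ge 2$ I would write $\1_m=\1_2\ucr{1}\1_{m-1}$ for interior and first slots, or, for the last slot, the equal expression $\1_m=\1_2\ucr{2}\1_{m-1}$ furnished by part (1), and use \eqref{E1.1.1} to move the insertion $\ucr{i}\1_n$ inside the $\1_{m-1}$ factor; the resulting inner composition has output arity $N-1$, so the induction hypothesis applies and one reassembles $\1_N$. Finally part (2) follows from $(\star)$ as described above.

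The main obstacle I anticipate is purely bookkeeping: matching slot indices under the two associativity axioms \eqref{E1.1.1} and \eqref{E1.1.2}, and in particular verifying in the last-slot case $i=m$ that the reassociation lands on slot $m-1$ of the inner $\1_{m-1}$. This is where part (1)'s symmetric description of $\1_m$ is essential, and where inducting on the output arity $N$ (so that every reassociated inner composition is strictly smaller) keeps the argument from becoming circular.
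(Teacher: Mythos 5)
Your argument is correct, but note that there is nothing in this paper to compare it against: the lemma is stated with the citation \cite[Lemma 2.7]{BYZ} and no proof is given in this text, so the benchmark is the (similar, equational) induction in that reference rather than anything here. Your ordering of the steps is exactly right and defuses the circularity you flagged: the chain $\1_2\ucr{2}\1_n=(\1_2\ucr{2}\1_2)\ucr{2}\1_{n-1}=(\1_2\ucr{1}\1_2)\ucr{2}\1_{n-1}=\1_2\ucr{1}(\1_2\ucr{2}\1_{n-1})$ uses only \eqref{E1.1.1} and \eqref{E1.5.2} and closes the induction for part (1), and inducting on the output arity $N=m+n-1$ for $(\star)$ lets both the case $1\le i\le m-1$ (via $\1_m=\1_2\ucr{1}\1_{m-1}$) and the case $i=m$ (via $\1_m=\1_2\ucr{2}\1_{m-1}$, supplied by part (1)) reduce to instances of output arity $N-1$; the index check $2-1+(m-1)=m$ in \eqref{E1.1.1} confirms the reassociation lands on slot $m-1$ of the inner factor, as you anticipated. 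Two points worth making explicit in a written version: (i) the terminal ``reassembly'' steps $\1_2\ucr{1}\1_k=\1_{k+1}$ and $\1_2\ucr{2}\1_k=\1_{k+1}$ need the readings $k=0$ (from \eqref{E1.5.1}) and $k=1$ (from (OP1)) in addition to the definition and part (1), since $(\star)$ with $n=0$ is genuinely used in part (2) whenever some $k_j=0$; (ii) in deducing part (2) from $(\star)$ via \eqref{E1.1.5} one should remark that inserting at slots $n, n-1, \dots$ in decreasing order never shifts the slots yet to be filled, so each step is a legitimate instance of $(\star)$. Finally, there is a slicker alternative that your proof renders unnecessary: by Definition \ref{xxdef1.5}(4) a $2a$-unitary structure is precisely a morphism $\As\to\ip$ in $\Op_+$, both identities of the lemma hold in $\As$ by associativity, and they transfer along the morphism; your equational induction proves the same facts directly from \eqref{E1.5.1} and \eqref{E1.5.2}, which is more elementary and self-contained.
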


\begin{lemma}
\label{xxlem2.4}
Let $\ip$ be a $2$-unitary operad of GK-dimension $3$. Suppose 
that $\up{1}(1)$ has a $\Bbbk$-basis $\{\delta_j\mid j\in [d]\}$
and $\up{2}(2)$ has a $\Bbbk$-basis $\{\tau_s\mid s\in [m]\}$.
Then for every $n\ge 3$, $\ip(n)$ has a $\Bbbk$-basis:
\begin{align}
\label{E2.4.1}\tag{E2.4.1}
\{\1_n\}\cup \{\delta^n_{(i), j}\mid i\in [n], j\in [d]\}
\cup \{\tau^n_{(i_1i_2), s}\mid 1\le i_1<i_2\le n, s\in [m]\},
\end{align}
where $\delta^n_{(i), j}=(\1_n\ucr{1} \delta_j)\ast c_{i}$,
$\tau^n_{(i_1i_2), s}=(\1_{n-1}\underset{1}{\circ}\tau_s)\ast
c_{i_1i_2}$,
and $c_i=(i, 1, \cdots, i-1, \hat{i}, i+1, \cdots, n)$ and
$c_{i_1i_2}=(i_1, i_2, 1, \cdots, i_1-1, \hat{i_1}, i_1+1,
\cdots, i_2-1, \hat{i_2}, i_2+1, \cdots, n)$.
\end{lemma}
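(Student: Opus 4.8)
The plan is to derive Lemma \ref{xxlem2.4} directly from the general basis theorem, Theorem \ref{xxthm1.7}, specialized to GK-dimension $3$, after rewriting the resulting basis elements into the form displayed in \eqref{E2.4.1}.

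First I would record the structural inputs. By Lemma \ref{xxlem2.2} the $2$-unit $\1_2$ is automatically a $2a$-unit, so Lemma \ref{xxlem2.3} applies; in particular $\1_m = \1'_m$ for all $m$, and hence $\1_2 \ucr{2} \1_m = \1'_{m+1} = \1_{m+1}$. Since $\gkdim \ip = 3$, the basic facts at the start of Section \ref{xxsec2} give $\up{k} = 0$ for all $k \geq 3$, so in Theorem \ref{xxthm1.7} one has $z_k = 0$ and $\Theta^k = \varnothing$ for $k \geq 3$; only $k = 0, 1, 2$ contribute to the basis $\bigcup_{0 \le k \le n} \textbf{B}_k(n)$. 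Because each $\Theta^k$ is only required to be a $\Bbbk$-basis of $\up{k}(k)$, I am free to choose $\Theta^0 = \{\1_0\}$ (as $\up{0}(0) = \ip(0) = \Bbbk\1_0$), $\Theta^1 = \{\delta_j \mid j \in [d]\}$, and $\Theta^2 = \{\tau_s \mid s \in [m]\}$.

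The single computational step is to simplify the basis elements $\1_2 \circ (\theta_i^k, \1_{n-k}) \ast c_I$ produced by Theorem \ref{xxthm1.7}. Expanding the composite via \eqref{E1.1.5} gives $\1_2 \circ (\theta_i^k, \1_{n-k}) = (\1_2 \ucr{2} \1_{n-k}) \ucr{1} \theta_i^k = \1_{n-k+1} \ucr{1} \theta_i^k$, using the $2a$-unit identity above. For $k = 1$ this reads $\1_n \ucr{1} \delta_j$ and for $k = 2$ it reads $\1_{n-1} \ucr{1} \tau_s$; for $k = 0$ a further application of Lemma \ref{xxlem2.3}(2) yields $\1_{n+1} \ucr{1} \1_0 = \1_n$, so that $\textbf{B}_0(n) = \{\1_n\}$. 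Since the permutation $c_I$ of Theorem \ref{xxthm1.7} specializes, for $|I| = 1$ and $|I| = 2$, to the permutations $c_i$ and $c_{i_1 i_2}$ of the statement, the sets $\textbf{B}_1(n)$ and $\textbf{B}_2(n)$ become exactly $\{\delta^n_{(i),j}\}$ and $\{\tau^n_{(i_1 i_2), s}\}$. Thus $\bigcup_{0 \le k \le n} \textbf{B}_k(n)$ coincides with \eqref{E2.4.1}, and Theorem \ref{xxthm1.7} guarantees it is a $\Bbbk$-basis of $\ip(n)$.

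I do not expect a genuine obstacle: the argument is essentially bookkeeping on top of Theorem \ref{xxthm1.7}. The only place demanding care is the rewriting $\1_2 \circ (\theta, \1_{n-k}) = \1_{n-k+1} \ucr{1} \theta$, where the composition convention \eqref{E1.1.5} and the $2a$-unit relations must be combined correctly; as a consistency check, the cardinality $1 + dn + m \tfrac{n(n-1)}{2}$ of the proposed basis matches the dimension formula recorded in fact (2) of Section \ref{xxsec2}.
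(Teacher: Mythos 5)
Your proposal is correct and takes essentially the same route as the paper's proof: both specialize Theorem \ref{xxthm1.7} using $\up{k}(k)=0$ for $k\ge 3$, choose $\Theta^1=\{\delta_j\}$ and $\Theta^2=\{\tau_s\}$, and then use Lemmas \ref{xxlem2.2} and \ref{xxlem2.3} to identify $\1_2\circ(\delta_j,\1_{n-1})$ with $\1_n\ucr{1}\delta_j$ and $\1_2\circ(\tau_s,\1_{n-2})$ with $\1_{n-1}\ucr{1}\tau_s$. The only cosmetic difference is the direction of the rewriting (you expand $\1_2\circ(\theta,\1_{n-k})$ via \eqref{E1.1.5}, while the paper starts from $\1_n\ucr{1}\delta_j$ and applies \eqref{E1.1.2}), which changes nothing.
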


\begin{proof}
Since $\ip$ is a $2$-unitary operad of GK-dimension $3$, we have
$\up{k}(k)=0$ for all $k\ge 3$. By Theorem \ref{xxthm1.7}, we can
choose a basis of $\ip(n)$ as follows
\[\{\1_n\}\cup\{\1_2\circ (\delta_j, \1_{n-1})\ast c_i
\mid i\in [n], j\in [d]\}\cup
\{\1_2\circ (\tau_s, \1_{n-2})\ast c_{i_1i_2}
\mid 1\le i_1<i_2\le n, s\in [m]\}.\]
By Lemmas \ref{xxlem2.2} and \ref{xxlem2.3}
\begin{align*}
\1_n\ucr{1} \delta_j=\1'_n \ucr{1} \delta_j
&=(\1_2\ucr{2}\1_{n-1}) \ucr{1} \delta_j
=(\1_2 \ucr{1} \delta_j)\ucr{2}\1_{n-1}=\1_2\circ (\delta_j, \1_{n-1}), \\
\1_{n-1}\ucr{1} \tau_s=\1'_{n-1} \ucr{1} \tau_s
&=(\1_2\ucr{2}\1_{n-2})\ucr{1} \tau_s
=(\1_2 \ucr{1} \tau_s)\ucr{3}\1_{n-2}=\1_2\circ (\tau_s, \1_{n-2}),
\end{align*}
we immediately obtain basis \eqref{E2.4.1} of $\ip(n)$.
\end{proof}

\begin{proposition}
\label{xxpro2.5}
Suppose ${\text{char}}\; \Bbbk\neq 2$. Let $\ip$ be a 2-unitary 
operad of GK-dimension 3 with a $2$-unit $\1_2$. Let
$$ \1'_2:=\dfrac{1}{2}(\1_2+\1_2\ast (12)).$$
Then $\1'_2$ is also a $2a$-unit. Consequently, $\ip$ is 
$\Com$-augmented.
\end{proposition}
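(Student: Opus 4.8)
The plan is to verify directly that $\1'_2$ is again a $2a$-unit and then to manufacture a $\Com$-augmentation out of the fact that $\1'_2$ is symmetric. The symmetry is immediate: since $(12)$ is an involution in $\S_2$, we have $(\1_2\ast(12))\ast(12)=\1_2$, and therefore
\[
\1'_2 \ast (12) = \tfrac{1}{2}(\1_2 \ast (12) + \1_2) = \1'_2 .
\]
So it only remains to show that $\1'_2$ is a $2a$-unit, after which $\1'_2$ will be a \emph{symmetric} $2a$-unit.

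To prove $\1'_2$ is a $2a$-unit I would first show it is a $2$-unit and then invoke Lemma \ref{xxlem2.2}, whose first assertion guarantees that any $2$-unit of a GK-dimension $3$ operad is automatically a $2a$-unit. The $2$-unit condition $\1'_2 \ucr{i} \1_0 = \1$ (for $i=1,2$) splits into the contribution of $\1_2$, handled by \eqref{E1.5.1}, and the twisted contribution $(\1_2\ast(12))\ucr{i}\1_0$. For the latter I would apply the equivariance axiom \eqref{E1.1.4}: composing $\1_0$ into slot $i$ after acting by $(12)$ equals composing $\1_0$ into slot $(12)(i)$, decorated by an induced permutation that necessarily lies in $\S_1$ and is therefore trivial. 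Since $\1_2\ucr{1}\1_0 = \1_2\ucr{2}\1_0 = \1$, both twisted terms equal $\1$ as well, whence $\1'_2\ucr{i}\1_0 = \tfrac{1}{2}(\1+\1)=\1$. Equivalently, this same computation shows $\1'_2 - \1_2 = \tfrac{1}{2}(\1_2\ast(12)-\1_2)\in \up{2}(2)$, so one could instead apply the ``Moreover'' clause of Lemma \ref{xxlem2.2} to the $2a$-unit $\1_2$; either route yields that $\1'_2$ is a $2a$-unit.

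Finally, to deduce that $\ip$ is $\Com$-augmented, I would build the morphism $\Com \to \ip$ sending the generator $\1_n \in \Com(n)$ to the $n$-ary unit assembled from $\1'_2$. Associativity of $\1'_2$ together with Lemma \ref{xxlem2.3} ensures that this assignment is independent of the bracketing and compatible with partial composition, matching $\1_m \ucr{i} \1_n = \1_{m+n-1}$ in $\Com$. The point requiring genuine care — and the main obstacle — is $\S_n$-equivariance: because $\Com(n)$ carries the trivial action, the morphism is well defined only if each $\1_n$ built in $\ip$ is fixed by all of $\S_n$. This is exactly where the symmetry $\1'_2\ast(12) = \1'_2$ enters: combined with associativity it forces invariance under each adjacent transposition of neighbouring inputs, and since the adjacent transpositions generate $\S_n$, one obtains full $\S_n$-invariance of $\1_n$. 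Once this is established, $\1'_2$ is a symmetric $2a$-unit in the sense of Definition \ref{xxdef1.5}(5), and the $\Com$-augmentation follows.
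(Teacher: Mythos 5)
Your proposal is correct and follows essentially the same route as the paper: verify that $\1'_2$ is a $2$-unit, invoke Lemma \ref{xxlem2.2} to upgrade it to a $2a$-unit, and then use the symmetry $\1'_2\ast(12)=\1'_2$ together with Lemma \ref{xxlem2.3} to get $\S_n$-invariance of each $\1_n$ (via adjacent transpositions), which defines the morphism $\Com\to\ip$. You merely spell out the steps the paper compresses into ``it is easy to check'' and ``by induction,'' including the correct use of the equivariance axiom \eqref{E1.1.4} with trivial induced permutation in $\S_1$.
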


\begin{proof} It is easy to check that $\1'_2$ is a 2-unit. By
Lemma \ref{xxlem2.2}, $\1'_2$ is a $2a$-unit, namely,
$(\ip, \1_0,\1,\1'_2)$ is a 2a-unitary operad.

After replacing $\1_2$ by $\1'_2$ we may assume that $\1_2 \ast
(12)=\1_2$. It follows from induction and Lemma \ref{xxlem2.3}(1)
that $\1_n \ast \sigma=\1_n$ for all $\sigma\in \S_n$. Therefore
there is a canonical morphism from $\Com$ [Example \ref{xxex1.2}]
to $\ip$ sending $\1_n\mapsto \1_n$ for all $n\geq 0$.
\end{proof}

\section{Trident Algebras}
\label{xxsec3}

Let $R$ be a untial associative algebra over $\Bbbk$ with a right 
action of an abelian group $G$ satisfying $(ab).g=(a.g)(b.g)$ and 
for all $a, b\in R$ and $g\in G$. Such an $R$ is called a 
{\it $\Bbbk G$-module algebra}. Recall that the {\it skew group 
algebra} $R\# G$ is the vector space $R\otimes \Bbbk G$ with the 
multiplication
\[(a\# g)(b\# h)= ((a.h)b)\#(gh)).\]
Furthermore, a right module $M$ over $R\# G$ means that $M$ is a 
right $R$-module and a right $\Bbbk G$-module satisfying 
$(\mu a)g=(\mu g)(a.g)$ for all $\mu\in M, g\in G$ and $a\in R$.
The following lemma is easy.

\begin{lemma}
\label{xxlem3.1}
Let $G$ be an abelian group. Let $A$ and $B$ be $G$-module algebras. 
Suppose $M, N$ are right modules over the skew groups algebras 
$A\# G$ and $B\# G$, respectively. Then $M\otimes N$ is a right 
$(A\otimes B)\# G$-module with the action given by 
\[(x\otimes y)(a\otimes b\# g)\colon= (x.(a\#g))\otimes (y. (b\#g))\]
for $x\in M, y\in N, a\in A, b\in B, g\in G$.
\end{lemma}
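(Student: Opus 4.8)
The plan is to verify directly that the proposed action of $(A\otimes B)\# G$ on $M\otimes N$ satisfies the module axioms, by reducing everything to the three given pieces of data: the right $A\#G$-module structure on $M$, the right $B\#G$-module structure on $N$, and the fact that $G$ is abelian (so that the $G$-action is compatible across the two factors). First I would record what must be checked. Writing a typical element of $(A\otimes B)\#G$ as $a\otimes b\# g$, associativity of the module action amounts to showing
\[
\bigl((x\otimes y)(a\otimes b\# g)\bigr)(a'\otimes b'\# h)
=(x\otimes y)\bigl((a\otimes b\# g)(a'\otimes b'\# h)\bigr),
\]
together with the unitality condition that $(x\otimes y)(1_A\otimes 1_B\# e)=x\otimes y$, where $e\in G$ is the identity. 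The unitality is immediate from the definition, since $1_A\# e$ acts as the identity on $M$ and $1_B\# e$ on $N$ by hypothesis.

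Next I would unwind both sides of the associativity identity. The left-hand side, by two applications of the defining formula, equals
\[
\bigl(x.(a\#g)\bigr).(a'\#h)\;\otimes\;\bigl(y.(b\#g)\bigr).(b'\#h),
\]
which, using associativity of the $A\#G$- and $B\#G$-actions on $M$ and $N$ separately, becomes
\[
x.\bigl((a\#g)(a'\#h)\bigr)\;\otimes\;y.\bigl((b\#g)(b'\#h)\bigr).
\]
For the right-hand side I would first compute the product $(a\otimes b\#g)(a'\otimes b'\#h)$ inside the skew group algebra $(A\otimes B)\#G$ using the multiplication rule recalled just before the lemma, namely $\bigl((a\otimes b).h\bigr)(a'\otimes b')\#(gh)$, where the $G$-action on the tensor product algebra $A\otimes B$ is the diagonal one, $(a\otimes b).h=(a.h)\otimes(b.h)$. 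This yields $\bigl((a.h)a'\bigr)\otimes\bigl((b.h)b'\bigr)\#(gh)$, and applying the defining action formula once gives
\[
x.\bigl(((a.h)a')\#(gh)\bigr)\;\otimes\;y.\bigl(((b.h)b')\#(gh)\bigr).
\]
The crux is then to match each tensor factor: I must show $(a\#g)(a'\#h)=((a.h)a')\#(gh)$ as elements of $A\#G$, but this is exactly the skew group multiplication rule for $A\#G$, and likewise for $B$. Hence both sides agree factor by factor.

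The step I expect to require the most care—though it is not deep—is bookkeeping the fact that $A\otimes B$ is a $G$-module algebra under the diagonal action and that its skew group product is compatible with the separate products in $A\#G$ and $B\#G$; here commutativity of $G$ is what guarantees the diagonal action is again an action and that no cross-terms in the group coordinate appear. I would state this compatibility as a preliminary observation, verify the multiplicativity $(a\otimes b).h=(a.h)\otimes(b.h)$ respects products, and then the associativity of the $M\otimes N$-action follows by the factorwise matching above. Finally I would note that $M$ being a right $R$-module and right $\Bbbk G$-module satisfying the compatibility $(\mu a)g=(\mu g)(a.g)$ is precisely the unpacked definition of an $A\#G$-module, so no separate verification of the ``mixed'' axiom for $M\otimes N$ is needed beyond what the factorwise argument already supplies.
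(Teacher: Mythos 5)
Your direct factorwise verification is correct and is exactly the routine check the paper intends: the paper gives no proof at all, introducing the lemma only with the remark that it ``is easy,'' and your computation (unwinding both sides of the associativity axiom, computing the product in $(A\otimes B)\# G$ via the diagonal action, and matching each tensor factor against the skew-product rules in $A\# G$ and $B\# G$) is the omitted verification. One minor quibble that does not affect correctness: your parenthetical claim that commutativity of $G$ is what guarantees the diagonal action is an action is inaccurate --- the diagonal right action, and indeed your entire calculation, goes through verbatim for an arbitrary group $G$, so the abelian hypothesis is never actually invoked in your argument.
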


Let $A$ be a unital associative algebra. Clearly, the tensor 
product algebra $A\otimes A$ (also denoted by $A^{\otimes 2}$) 
admits a natural right $\S_2$-action given by 
$(a\otimes b)(12)\colon=b\otimes a$. So we obtain a skew group
algebra $(A\otimes A)\# \S_2$ (also denoted by 
$A^{\otimes 2} \# \S_2$). Let $M$ be a left $A$- right 
$A^{\otimes 2}\#\S_2$-bimodule. Equivalently, $M$ is both a right 
$\Bbbk\S_2$-module and a left $A$- right $(A\otimes A)$-bimodule
satisfying
\begin{align}
\label{E3.1.1}\tag{E3.1.1}
a (\mu (12))=& (a\mu)(12),\\
\label{E3.1.2}\tag{E3.1.2}
(\mu.(a\otimes b)) (12)=& (\mu(12)).(b\otimes a),
\end{align}
for all $a, b\in A$, $\mu\in M$.

\begin{remark}
\label{xxrem3.2}
Observe that if $M$ is both a right $\Bbbk\S_2$-module and a 
right $A$-module with the action
$M \otimes A \to M, (\mu, a)\mapsto \mu\underset{1}{\cdot} a$,
then $M$ admits another right $A$-module action given by
$$\mu\underset{2}{\cdot}a=((\mu.(12)) \underset{1}{\cdot} a).(12),$$
which is called the \textit{congruence action}. Therefore, a right
$A^{\otimes 2}\#\S_2$-module action on $M$ is equivalent to a right 
$\Bbbk \S_2$-action together with a right
$A$-module action satisfying
\begin{equation}
\label{E3.2.1}\tag{E3.2.1}
(\mu \underset{1}{\cdot}a)\underset{2}{\cdot} b 
=(\mu \underset{2}{\cdot}b)\underset{1}{\cdot} a
\end{equation}
for all $\mu\in M, a\in A$.
\end{remark}

\subsection{Tridents}
\label{xxsec3.1}
In this subsection we introduce a new algebraic system.

Let $A=\Bbbk \1_1 \oplus \bar A$ be an augmented algebra with
augmentation ideal $\bar A$. Obviously, the right regular 
$A^{\otimes 2}$-module $A\otimes A$ with an action of $\S_2$ 
given by $(a\otimes b)(12)=b\otimes a$ admits a right 
$A^{\otimes 2}\# \S_2$-module structure. Furthermore, its quotient 
module $(A\otimes A)/(\bar A \otimes \bar A)$ admits an 
$(A, A^{\otimes 2}\# \S_2)$-bimodule structure, where the left 
$A$-action is given by
\[a\cdot [1_A\otimes 1_A]\colon=[a\otimes 1_A]+[1_A\otimes a],
\quad
a\cdot [b\otimes 1_A]\colon=[(ab)\otimes 1_A],
\quad a\cdot [1_A\otimes b]\colon=[1_A\otimes (ab)]\]
for $a, b\in \bar A$,
where $[x\otimes y]$ denotes the element 
$x\otimes y+\bar A\otimes \bar A\in (A\otimes A)/(\bar A
\otimes \bar A)$ for $x\otimes y\in A \otimes A$.

In fact, $(A\otimes A)/(\bar A\otimes \bar A)$ is isomorphic to
$\Bbbk(1_A\otimes 1_A)\oplus (\bar A\otimes \Bbbk 1_A)
\oplus (\Bbbk 1_A \otimes \bar A)$ as a vector space.

Let $A$ be an augmented algebra with the augmented ideal 
$\bar A$ and $M$ an $(A, A^{\otimes 2}\# \S_2)$-bimodule.
Suppose that $E$ is an extension of $M$ by 
$(A\otimes A)/(\bar A\otimes \bar A)$ as an 
$(A, A^{\otimes 2}\# \S_2)$-bimodule. Then the triple $(A, M, E)$ 
is called a \textit{trident}.

Let $(A, M, E)$ and $(A', M', E')$ be two tridents. Suppose that 
$\alpha\colon A \to A'$ is a homomorphism of augmented algebras, 
and $\beta\colon M \to M'$ is a homomorphism of 
$(A, A^{\otimes 2}\# \S_2)$-bimodules with the 
$(A, A^{\otimes 2}\# \S_2)$-actions on $M'$ given by the algebra 
homomorphisms $\alpha\colon A \to A'$ and 
$\alpha\otimes \alpha\colon A\otimes A \to A'\otimes A'$. Clearly, 
$[\alpha\otimes \alpha]\colon (A\otimes A)/(\bar A\otimes \bar A) 
\to (A'\otimes A')/(\bar A'\otimes \bar A')$ is a homomorphism of 
$(A, A^{\otimes 2}\#\S_2)$-bimodules. Then one can obtain a 
homomorphism of extensions of $(A, A^{\otimes 2}\#\S_2)$-bimodules
\begin{align*}
\begin{CD}
0 @>>> M @>>> E @>>> (A\otimes A)/(\bar A\otimes \bar A) @>>> 0\\
&& @V{\beta}VV  @V{\gamma}VV @VV{[\alpha\otimes \alpha]}V \\
0 @>>> M' @>>> E' @>>> (A'\otimes A')/(\bar A'\otimes \bar A') @>>> 0
\end{CD}
\end{align*}
which is also called a homomorphism of tridents. Consequently, 
we obtain a category $\mathcal{T}$, called the {\it trident category}.
The following lemmas follows from Lemma \ref{xxlem3.1}.

\begin{lemma}
\label{xxlem3.3}
Let $A, B$ be an associative algebras over $\Bbbk$. Suppose that 
$M$ and $N$ are $(A, A^{\otimes 2}\#\S_2)$- and 
$(B, B^{\otimes 2}\#\S_2)$-bimodules, respectively. Then $M\otimes N$ 
is $(A\otimes B, (A\otimes B)^{\otimes 2}\#\S_2)$-bimodule. 
\end{lemma}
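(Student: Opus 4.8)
The plan is to reduce everything to Lemma \ref{xxlem3.1} with $G=\S_2$. First I would observe that $A^{\otimes 2}$ and $B^{\otimes 2}$ are $\Bbbk\S_2$-module algebras under the swap action $(a_1\otimes a_2)(12)=a_2\otimes a_1$, and that the given right $A^{\otimes 2}\#\S_2$- and $B^{\otimes 2}\#\S_2$-module structures on $M$ and $N$ are precisely the input required by Lemma \ref{xxlem3.1}. Applying that lemma then endows $M\otimes N$ with a right $(A^{\otimes 2}\otimes B^{\otimes 2})\#\S_2$-module structure, given on homogeneous elements by $(x\otimes y).(\alpha\otimes\beta\# g)=(x.(\alpha\# g))\otimes(y.(\beta\# g))$ for $\alpha\in A^{\otimes 2}$, $\beta\in B^{\otimes 2}$ and $g\in\S_2$.

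The second step is to identify the target algebra. There is a canonical \emph{middle-swap} isomorphism of algebras $(A\otimes B)^{\otimes 2}\cong A^{\otimes 2}\otimes B^{\otimes 2}$ sending $(a_1\otimes b_1)\otimes(a_2\otimes b_2)$ to $(a_1\otimes a_2)\otimes(b_1\otimes b_2)$. The point to check is that this isomorphism is $\S_2$-equivariant: the generator $(12)$ acts on $(A\otimes B)^{\otimes 2}$ by interchanging the two copies of $A\otimes B$, and under the isomorphism this becomes exactly the diagonal swap action on $A^{\otimes 2}\otimes B^{\otimes 2}$ (swapping within each of the two tensor slots simultaneously). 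Consequently the skew group algebras $(A\otimes B)^{\otimes 2}\#\S_2$ and $(A^{\otimes 2}\otimes B^{\otimes 2})\#\S_2$ are isomorphic, and transporting the structure from the first step along this isomorphism makes $M\otimes N$ into a right $(A\otimes B)^{\otimes 2}\#\S_2$-module.

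Finally I would supply the left structure and verify the bimodule axiom. Since $M$ and $N$ are left $A$- and left $B$-modules, $M\otimes N$ is a left $A\otimes B$-module via $(a\otimes b).(x\otimes y)=(a.x)\otimes(b.y)$. The requirement that this left action commute with the right $(A\otimes B)^{\otimes 2}\#\S_2$-action factors through the two tensor slots: it reduces, slot by slot, to the bimodule compatibility already assumed for $M$ over $(A, A^{\otimes 2}\#\S_2)$ and for $N$ over $(B, B^{\otimes 2}\#\S_2)$, for both the $\alpha,\beta$ parts and the $g\in\S_2$ part. The only genuinely non-formal point, and hence the main obstacle, is the $\S_2$-equivariance of the middle-swap isomorphism in the second step; once that is confirmed the remainder is bookkeeping that inherits associativity and the bimodule law directly from $M$ and $N$.
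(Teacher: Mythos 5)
Your proposal is correct and follows essentially the same route as the paper, which proves Lemma \ref{xxlem3.3} simply by invoking Lemma \ref{xxlem3.1} with $G=\S_2$ applied to the $\S_2$-module algebras $A^{\otimes 2}$ and $B^{\otimes 2}$. Your explicit verification of the $\S_2$-equivariance of the middle-swap isomorphism $(A\otimes B)^{\otimes 2}\cong A^{\otimes 2}\otimes B^{\otimes 2}$, together with the slot-by-slot check of the left $A\otimes B$-action and the bimodule compatibility, supplies exactly the bookkeeping the paper leaves implicit.
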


\subsection{Trident systems}
\label{xxsec3.2} 
There is another way of introducing a trident.
Denote by $\rmod\S_2$ the category of finite dimensional right
$\Bbbk\S_2$-modules. It is well known that $V\otimes W\in \rmod\S_2$
with the diagonal action
$$(v\otimes w)\ast (12)=(v\ast (12))\otimes (w\ast (12))$$
for $V, W\in \rmod\S_2$.  Let $B$ be an associative algebra in
the category $\rmod\S_2$. Such an algebra is also called a
$\Bbbk\S_2$-module algebra (in Hopf algebra language). Sometimes
it is called an algebra with involution (but $\Bbbk$ may not be
the complex field). A right (\textit{resp}. left) module $V$
over a $\Bbbk\S_2$-module algebra $B$ means $V\in \rmod\S_2$
and the action $V\otimes B \to V$ (\textit{resp}.
$B\otimes V\to V$) is a homomorphism of $\Bbbk\S_2$-modules.

\begin{definition}
\label{xxdef3.4}
A pair $(A,M)$ with morphisms $f,g$, or equivalently, a
quadruple $(A, M, f,g)$, is called a {\it trident system} if
\begin{enumerate}
\item[(1)]
$A=\Bbbk \1_1 \oplus \bar A$ is an augmented algebra with 
the augmentation ideal $\bar A$,
\item[(2)]
$M$ is a nontrivial $(A, A\otimes A)$-bimodule in $\rmod\S_2$,
\item[(3)]
$f\colon \bar A \to M$ is a $\Bbbk$-linear map in $\rmod\S_2$
where the $\S_2$-action on ${\bar A}$ is trivial,
\item[(4)]
$g\colon \bar A\ \otimes \bar A \to M$ is a homomorphism of right
$A\otimes A$-modules in $\rmod\S_2$,
\end{enumerate}
such that the following identities hold
\begin{align}
f(ab)=& af(b)+f(a)\cdot (b\otimes 1_A)
+f(a)\cdot (1_A\otimes b)+g(a, b)+g(b, a),
\label{E3.4.1}\tag{E3.4.1}\\
f(a)\cdot(b, c)=&ag(b, c)-g(ab, c)-g(b, ac),
\label{E3.4.2}\tag{E3.4.2}
\end{align}
for all $a,b,c\in {\bar A}$.
\end{definition}

We define morphisms between trident systems as follows.
Let $(A, M, f, g)$ and $(A', M', f', g')$ be two trident systems.
A morphism $(\alpha, \beta)\colon (A, M, f, g)\to (A', M', f', g')$
is given by an algebra homomorphism $\alpha\colon A \to A'$ and a
trident $A$-module homomorphism $\beta\colon M \to M'$ such that
the following diagrams commute
\[\begin{CD}
\bar A @>f>> M \\
@V{\alpha}VV @VV{\beta}V\\
\bar A'@>f'>> M'
\end{CD}\quad\quad\quad{\rm and}\quad\quad\quad
\begin{CD}
\bar A\otimes \bar A @>g>> M \\
@V{\alpha\otimes \alpha}VV @VV{\beta}V\\
\bar A'\otimes \bar A'@>g'>> M'
\end{CD}\]
where the $(A, A\otimes A)$-bimodule actions on $M'$ is
determined by $(A',A'\otimes A')$-bimodule actions and
the algebra homomorphisms $\alpha\colon A \to A'$ and
$\alpha\otimes \alpha\colon A\otimes A \to A'\otimes A'$.

One can define a category $\C$ consisting of all
trident systems and morphisms defined above.

\begin{proposition}
\label{xxpro3.5}
Retain the above notation. The trident category is isomorphic 
to the category of trident systems.
\end{proposition}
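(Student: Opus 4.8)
The plan is to exhibit mutually inverse functors $F\colon \T\to\C$ and $G\colon\C\to\T$ by routing everything through the canonical generator of the quotient bimodule. Write $Q:=(A\ot A)/(\bar A\ot\bar A)$ and $e:=[1_A\ot 1_A]\in Q$. The first thing I would record is that $e$ is $\S_2$-invariant, that $Q$ is generated by $e$ under the right $A^{\ot 2}$-action, and that the decomposition $Q=\Bbbk e\oplus(\bar A\ot\Bbbk 1_A)\oplus(\Bbbk 1_A\ot\bar A)$ noted in the text is a decomposition of vector spaces with $[a\ot 1_A]=e\cdot(a\ot 1_A)$ and $[1_A\ot a]=e\cdot(1_A\ot a)$ for $a\in\bar A$. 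This identifies all of $Q$ with translates of $e$, so that every piece of structure on an extension of $Q$ is controlled by its effect on $e$.

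For $F$, given a trident $(A,M,E)$ with short exact sequence $0\to M\to E\xrightarrow{p} Q\to 0$ of $(A,A^{\ot 2}\#\S_2)$-bimodules, I would choose an $\S_2$-invariant lift $\tilde e\in E$ of $e$ and set
\[
f(a):=a\cdot\tilde e-\tilde e\cdot(a\ot 1_A)-\tilde e\cdot(1_A\ot a),\qquad g(a,b):=\tilde e\cdot(a\ot b)
\]
for $a,b\in\bar A$. Since $p(f(a))=a\cdot e-e\cdot(a\ot 1_A)-e\cdot(1_A\ot a)=0$ and $p(g(a,b))=[a\ot b]=0$, both land in $M$. Invariance of $\tilde e$ together with the compatibilities \eqref{E3.1.1}--\eqref{E3.1.2} forces $f(a)$ to be symmetric and $g(a,b)\ast(12)=g(b,a)$, i.e.\ $f$ and $g$ are morphisms in $\rmod\S_2$, while right $A^{\ot2}$-linearity of $g$ is immediate from associativity of the right action. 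The two defining identities then drop out of the bimodule axioms: expanding $(ab)\cdot\tilde e=a\cdot(b\cdot\tilde e)$ and collecting the $M$-component yields exactly \eqref{E3.4.1}, whereas multiplying $f(a)$ on the right by $b\ot c$ and using that the left and right actions commute yields exactly \eqref{E3.4.2}. Thus $F(A,M,E):=(A,M,f,g)$ is a trident system.

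For $G$ I would run this in reverse: given $(A,M,f,g)$, put $E:=M\oplus Q$ as a vector space and define the actions by prescribing them on $M$ (its given structure) and on the generator, namely $a\cdot(0,e):=(f(a),\,[a\ot1_A]+[1_A\ot a])$ and $(0,e)\cdot(a\ot b):=(g(a,b),0)$ for $a,b\in\bar A$, extending the rest by the required module laws, with the $\S_2$-action the direct sum of the given action on $M$ and that on $Q$ (which fixes $(0,e)$). The real content, and the step I expect to be the main obstacle, is that these formulas assemble into an honest $(A,A^{\ot2}\#\S_2)$-bimodule: one must check associativity of the left action, associativity of the right action together with the congruence compatibility \eqref{E3.2.1} of Remark \ref{xxrem3.2}, the commutation of the left and right actions, and the identities \eqref{E3.1.1}--\eqref{E3.1.2}. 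My plan is to reduce each of these to the generator $e$, where associativity of the left action collapses precisely to \eqref{E3.4.1}, the left--right commutation collapses precisely to \eqref{E3.4.2}, and the $\S_2$-identities collapse to the $\rmod\S_2$-equivariance of $f$ and $g$; everything else is inherited from $M$ and from the already-understood structure of $Q$.

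Finally I would verify that $F$ and $G$ are mutually inverse and functorial. On objects this is the calculation that extracting $(f,g)$ from the $E$ built by $G$ (using the canonical invariant lift $\tilde e=(0,e)$) returns $(f,g)$, and that reassembling $E$ from $F(A,M,E)$ recovers it via the splitting $E=M\oplus Q$; on morphisms, a trident morphism restricts to data making the two squares of Definition \ref{xxdef3.4} commute, while a trident-system morphism $(\alpha,\beta)$ induces a unique bimodule map on $M\oplus Q$, so the assignments are inverse bijections on Hom-sets. The one genuinely delicate point beyond bookkeeping is the existence of the $\S_2$-invariant lift $\tilde e$ used to define $F$: this is where one symmetrizes, and it is the step I would isolate and treat first, since it is the unique place where the symmetric structure must be handled with care.
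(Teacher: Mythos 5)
Your proposal follows the paper's own proof almost verbatim: the paper likewise fixes a preimage $\1_2\in E$ of $[1_A\ot 1_A]$ and sets $f(a)=a\cdot \1_2-\1_2\cdot(a\ot 1_A)-\1_2\cdot(1_A\ot a)$ and $g(a,b)=\1_2\cdot(a\ot b)$, checks via $\pi$ that both land in $M$, and in the other direction builds $E=M\oplus (A\ot A)/(\bar A\ot \bar A)$ with the $(A,A^{\ot 2}\#\S_2)$-action concentrated on the generator (the paper writes this action out on all elements explicitly; your ``define on $M$ and on the generator and extend by the module laws, then verify everything by reduction to the generator'' is the same construction), and finally declares the two canonical constructions mutually inverse. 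So in structure and in all formulas your argument is the paper's argument.

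The one place you diverge is the lift, and it cuts both ways. The paper takes an \emph{arbitrary} preimage $\1_2$ of $[1_A\ot 1_A]$ and asserts that $(A,M,f,g)$ is ``directly checked'' to be a trident system, whereas you insist on an $\S_2$-invariant lift $\tilde e$. Your instinct is sound: with a non-invariant lift, \eqref{E3.1.2} gives $g(a,b)\ast(12)=g(b,a)+(\1_2\ast(12)-\1_2)\cdot(b\ot a)$, so the equivariance demanded by Definition \ref{xxdef3.4}(3)(4) genuinely uses invariance of the lift --- a point the paper's proof leaves implicit. But your proposed fix, symmetrizing $\tilde e=\frac{1}{2}(\1_2+\1_2\ast(12))$, requires ${\text{char}}\;\Bbbk\neq 2$, a hypothesis that Section \ref{xxsec3} does not impose (the paper reserves it for Proposition \ref{xxpro2.5}); in general the obstruction to an invariant lift is a class in $H^1(\S_2,M)$, which need not vanish in characteristic $2$. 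So as written your argument establishes the proposition only when $2$ is invertible (or when an invariant lift happens to exist); note that in the intended application, Theorem \ref{xxthm4.2}, invariance comes for free, since there the lift is the symmetric $2a$-unit of a $\Com$-augmented operad rather than an average. A final small point you share with the paper: with choices of lifts one gets $F\circ G=\mathrm{id}$ on the nose but $G\circ F$ only canonically isomorphic to the identity via the splitting determined by the chosen lift, so strictly speaking the argument as organized yields an equivalence; the paper passes over the same subtlety by calling both constructions canonical.
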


\begin{proof} Let $(A,M,f,g)$ be a trident system. Recall that 
$A^{\otimes 2}$ is a subring of $A^{\otimes 2}\# \S_2$. Then $f$ 
and $g$ satisfy
\begin{enumerate}
\item[(1)] $f(a)=f(a)\ast (12)$,
\item[(2)] $g(a, b)=g(b, a)\ast (12)$,
\item[(3)] $f(ab)= a\cdot f(b)+f(a)\cdot(b\otimes 1_A\#(1))
+f(a)\cdot(1_A\otimes b\#(1))+g(a, b)+g(b, a)$,
\item[(4)] $f(a)\cdot (b\otimes c\#(1))=a\cdot g(b, c)-g(ab, c)-g(b, ac)$,
\end{enumerate}
for all $a, b, c\in \bar A$. Using these equations, one can 
define an extension $E$ of $M$ by $(A\otimes A)/(\bar A\otimes \bar A)$.
To be precise, $E=M\oplus (A\otimes A)/(\bar A\otimes \bar A)$ as 
a right $\S_2$-module with the $(A, A\otimes A)$-bimodule action
given by
\begin{align*}
	&a\cdot (x, \la[1_A\otimes 1_A]+[b\otimes 1_A]+[1_A\otimes c])=(ax+\la f(a)+f(a)\cdot (b\otimes 1_A)+f(a)\cdot (1_A\otimes c)\\
	& \qquad \qquad \qquad \qquad \qquad +g(b, a)+g(a, c), \la[a\otimes 1_A]+\la[1_A\otimes a]+[ab\otimes 1_A]+[1_A\otimes ac]), \\
	&(x, \la[1_A\otimes 1_A]+[b\otimes 1_A]+[1_A\otimes c])\cdot (a\otimes 1_A)=(x\cdot (a\otimes 1_A)+g(a, c), \la[a\otimes 1_A]+[ba\otimes 1_A]),\\
	&(x, \la[1_A\otimes 1_A]+[b\otimes 1_A]+[1_A\otimes c])\cdot (1_A\otimes a)=(x\cdot (1_A\otimes a)+g(b, a), \la[1_A\otimes a]+[1\otimes ca]),\\
	&(x, \la[1_A\otimes 1_A]+[b\otimes 1_A]+[1_A\otimes c])\cdot (a\otimes a')=(x\cdot (a\otimes a')+\la g(a, a')+g(ba, a')+g(a, ca'), 0)
\end{align*}
for all $\la\in \Bbbk$,  $a, a_1, a_2, b, c\in \bar A$ and $x\in M$.
It is easy to see that $(A,M,E)$ is a trident.

Conversely, given a trident $(A, M, E)$, we construct a trident system as
follows. Suppose that 
\[0\to M \to E \xrightarrow{\pi} (A\otimes A)/(\bar A\otimes \bar A)\to 0\]
is the corresponding short exact sequence of $(A, A^{\otimes 2}\#\S_2)$-bimodules. Without loss of generality, we
assume $M$ is a sub-bimodule of $E$. Fix an element $\1_2\in E$ with 
$\pi(\1_2)=[1_A\otimes 1_A]\in (A\otimes A)/(\bar A\otimes \bar A)$.
For all $a, b\in \bar A$, we define
\begin{align*}
	f(a)\colon = & a\cdot \1_2-\1_2\cdot (a\otimes 1_A\# (1))-\1_2\cdot (1_A\otimes a\#(1)),\\
	g(a, b)\colon=& \1_2\cdot (a\otimes b\#(1))
\end{align*}
in $E$. Clearly, in $(A\otimes A)/(\bar A\otimes \bar A)$, we have 
\begin{align*}
\pi(f(a))=& a\cdot[1_A\otimes 1_A]-[a\otimes 1_A]-[1_A\otimes a]=0,\\
\pi(g(a, b))=& [1_A\otimes 1_A]\cdot (a\otimes b)=0
\end{align*}
Therefore, we obtain two $\Bbbk$-linear maps 
\[f\colon \bar A\to M, \quad {\rm and} \quad g\colon \bar A\otimes \bar A \to M.\]
It can be directly checked that $(A, M, f, g)$ is a trident system. 

Since both constructions above are canonical, these defines two functors 
that are inverse to each other.  
\end{proof}

\begin{definition}
\label{xxsec3.6}
A {\it trident algebra} means either a trident $(A,M,E)$ or
a trident system $(A,M,f,g)$.
\end{definition}

\subsection{Examples}
\label{xxsec3.3} 
We give some easy examples of trident algebras.

\begin{example}
\label{xxex3.7}
Let $A$ be an augmented algebra and let $M$ be an 
$(A, A^{\otimes 2}\# \S_2)$-bimodule.
Consider the trivial extension of $M$ by 
$(A\otimes A)/(\bar A\otimes \bar A)$. Equivalently, 
$f: {\bar A}\to M$ and $g: {\bar A}^{\otimes 2} \to M$ are 
zero maps in the corresponding trident system. In this case, 
we call $(A,M,f,g)$ is called a {\it trivial} trident algebra.
\end{example}

\begin{example}
\label{xxex3.8}
This is the trident algebra corresponding to $\mathcal{D}_A \hot
\mathcal{D}_B$, where $\mathcal{D}_A$ and $\mathcal{D}_B$ are the 
2-unitary operad defined in \cite[Example 2.4]{BYZ}.

Let $A$ and $B$ be augmented algebras. 
Clearly, $A\otimes B$ is also an augmented algebra with 
${\overline{A\otimes B}}=
{\bar A}\otimes \Bbbk 1_B+\Bbbk 1_A\otimes {\bar B} 
+{\bar A}\otimes {\bar B}$.
From Subsection \ref{xxsec3.1}, we know 
$(A\otimes A)/(\bar A\otimes \bar A)$ and $(B\otimes B)/(\bar B\otimes\bar B)$
are $(A, A^{\otimes 2}\#\S_2)$-bimodule and 
$(B, B^{\otimes 2}\# \S_2)$-bimodule, respectively. By 
Lemma \ref{xxlem3.1}, we obtain an 
$(A\otimes B, (A\otimes B)^{\otimes 2}\#\S_2)$-bimodule
\[E=[(A\otimes A)/(\bar A\otimes \bar A)]
\otimes [(B\otimes B)/(\bar B\otimes\bar B)].\]
Observe that, 
\begin{align*}
E=(\Bbbk[1_A\otimes 1_A]\oplus (\bar A\otimes \Bbbk 1_A)
\oplus (\Bbbk 1_A\otimes \bar A))\otimes 
(\Bbbk[1_B\otimes 1_B]\oplus (\bar B\otimes \Bbbk 1_B)
\oplus (\Bbbk 1_B\otimes \bar B))
\end{align*}
and
\[M=[(\bar A\otimes \Bbbk 1_A)\otimes (\Bbbk 1_B\otimes \bar B)]
\oplus [(\Bbbk 1_A\otimes \bar A)\otimes (\bar B\otimes \Bbbk 1_B)] \]
is a sub-bimodule of $E$. By easy computation, we have 
\begin{align*}
E/M\cong & \Bbbk[1_{A\otimes B}\otimes 1_{A\otimes B}]
\oplus [(\Bbbk 1_A\otimes \bar B)\otimes \Bbbk 1_{A\otimes B}]
\oplus [(\Bbbk 1_{A\otimes B}\otimes (\Bbbk 1_A\otimes \bar B))]\\
& \oplus [(\bar A\otimes \Bbbk 1_B)\otimes \Bbbk 1_{A\otimes B} ]
\oplus [(\bar A\otimes \bar B) \otimes \Bbbk 1_{A\otimes B}]
\oplus [\Bbbk 1_{A\otimes B}\otimes (\bar A\otimes \Bbbk 1_B)]
\oplus [\Bbbk 1_{A\otimes B}\otimes (\bar A\otimes \bar B)]\\
=& \Bbbk[1_{A\otimes B}\otimes 1_{A\otimes B}]
\oplus (\overline{A\otimes B}\otimes \Bbbk 1_{A\otimes B})
\oplus (\Bbbk 1_{A\otimes B}\otimes \overline{A\otimes B})\\
=& [(A \otimes B)\otimes (A\otimes B)]/[(\overline{A \otimes B})
\otimes (\overline{A \otimes B})]
\end{align*}
Therefore, we obtain a trident $(A\otimes B, M, E)$. Finally this trident 
$(A\otimes B,M,E)$ is denoted by $A\odot B$. Using the language 
of trident system, we have 

\begin{enumerate}
\item[(1)]
$f\colon \overline{A\otimes B}\to M$ is determined by
$$\begin{aligned}
f(a\otimes 1_B)&=0,\\
f(1_A\otimes b)&=0,\\
f(a\otimes b) &=(a\otimes 1_A)\otimes (1_B\otimes b)+
(1_A\otimes a)\otimes (b\otimes 1_B)
\end{aligned}
$$
for all $a\in {\bar A}$ and $b\in {\bar B}$.
\item[(2)]
$g\colon \overline{A\otimes B}\otimes \overline{A\otimes B}\to M$ 
is determined by
$$\begin{aligned}
g((a\otimes 1_B)\otimes (a'\otimes 1_B))&=0,\\
g((a\otimes 1_B)\otimes (1_A\otimes b'))&
   =(a\otimes 1_A)\otimes (1_B\otimes b'),\\
g((a\otimes 1_B)\otimes (a'\otimes b'))&=0,\\
g((1_A\otimes b)\otimes (a'\otimes 1_B))&
   =(1_A\otimes a')\otimes (b\otimes 1_B),\\
g((1_A\otimes b)\otimes (1_A\otimes b'))&=0,\\
g((1_A\otimes b)\otimes (a'\otimes b'))&=0,\\
g((a\otimes b)\otimes (a'\otimes 1_B))&=0,\\
g((a\otimes b)\otimes (1_A\otimes b'))&=0,\\
g((a\otimes b)\otimes (a'\otimes b'))&=0
\end{aligned}
$$
for all $a, a'\in {\bar A}$ and $b,b'\in {\bar B}$.
\end{enumerate}
\end{example}

\section{Classification of 2-unitary operads of GK-dimension 3}
\label{xxsec4}
\subsection{An operad constructed from a trident algebra}
\label{xxsec4.1}
In this part, we construct a 2-unitary operad $\ip$ by $(A, M, E)$, 
with $\ip(0)=\Bbbk \1_0$, $\ip(1)=A$ and $\ip(2)=E$,
where the composition $\ip(1)\ucr{1} \ip(1)\to \ip(1)$ is given 
by the multiplication of $A$, the compositions 
$\ip(1) \ucr{1} \ip(2)\to \ip(2)$,
$\ip(2)\circ (\ip(1), \ip(1)) \to \ip(1)$ are given by the 
corresponding actions of $A$ on $E$. 

In fact, let $(A, M, f, g)$ be a trident system. We consider 
the operad $\ip$ generated by the $\Bbbk\S$-module 
$(\Bbbk\1_0, A, \Bbbk\1_2\oplus M, 0, 0, \cdots)$
and subject to the following relations
\begin{align*}
&a\circ \1_0=0,\ {\rm for\ all}\ a\in \bar A, \\
&\mu\underset{i}\circ \1_0=0, \ {\rm for\ all}\  \mu\in M, \\
&\1_2\underset{i}\circ\1_0=\1_1, \ {\rm for}\ i=1, 2, \\
&a\circ b=ab, \ {\rm for\ all}\ a, b\in \bar A, \\
&a\circ\mu=a\cdot \mu, \ {\rm for\ all}\ a\in \bar A, \mu\in M, \\
&a\circ \1_2=\1_2\underset{1}\circ a
 +\1_2\underset{2}\circ a+f(a), \ {\rm for\ all}\ a\in \bar A, \\
&\1_2\circ (a, b)=g(a, b), \ {\rm for\ all}\ a, b\in \bar A,\\
&\mu\circ (a, b)=\mu\cdot(a\otimes b), \ {\rm for\ all}\ \mu\in M, a, b\in \bar A, \\
&\1_2\underset{1}\circ\1_2 =\1_2\underset{2}{\circ}\1_2,\\
& \mu\underset{1}\circ \1_2=\1_2\underset{2}
 \circ\mu+(\1_2\underset{2}\circ\mu)\ast (2, 1, 3), \ {\rm for\ all}\ \mu\in M, \\
& \mu\underset{2}\circ \1_2=\1_2\underset{1}\circ\mu
 +(\1_2\underset{1}\circ\mu)\ast (1, 3, 2), \ {\rm for\ all}\ \mu\in M,\\
& \mu\underset{i}\circ \mu'=0,  \ {\rm for\ all}\ \mu, \mu'\in M, i=1, 2.
\end{align*}
where $\1_2\ast (2,1)=\1_2$, and $ab, a\cdot\mu, \mu\cdot(a\otimes b)$ are
given by the multiplication of $A$, the left module action of $A$ on $M$,
and the right module action of $A\otimes A$ on $M$, respectively.

Next we give an explicit description of $\ip$.

\noindent
(C1) The vector space $\ip(n)$:

\begin{itemize}
\item[(C11)] $\ip(0)=\Bbbk \1_0$.
\item[(C12)] $\ip(1)=A=\Bbbk \1_1 \oplus \bar A$.
\item[(C13)] $\ip(2)=\Bbbk \1_2\oplus (\bar A^{(2)}_1\oplus
       \bar A^{(2)}_2) \oplus M$.
\item[(C14)] for each $n\ge 3$,
\[\ip(n)=\Bbbk \1_n\oplus \bigoplus\limits_{k=1}^n \bar A^{(n)}_k
        \oplus \bigoplus\limits_{1\le i<j\le n} M^{(n)}_{ij},\]
where $\Bbbk\1_n$ is a 1-dimensional vector space with the basis
$\1_n$, $\bar A^{(n)}_k$ is a vector space
isomorphic to $\bar A$ for $1\le k\le n$, $n\ge 2$, and
$M^{(n)}_{ij}$ is a vector space isomorphic to $M$
for $1\le i<j\le n$, $n\ge 2$.
\end{itemize}

In order to write elements in $\bar A^{(n)}_k$ and $M^{(n)}_{ij}$,
we fix two families of $\Bbbk$-linear isomorphisms
\begin{center}
$\varphi^n_k\colon \bar A \to \bar A^{(n)}_k \quad$ and
$\quad \psi^n_{ij}\colon M \to M^{(n)}_{ij}$,
\end{center}
for $1\le k\le n, 1\le i<j\le n$ and $n\ge 2$. In fact, we
will see that $\bar A^{(n)}_k=\{\1_n\underset{k}{\circ}a\mid a\in \bar A\}$
and $M^{(n)}_{ij}=\{(\1_{n-1}\underset{1}{\circ} \mu)
\ast c_{ij}\mid \mu\in M \}$, where  $c_{ij}=(i, j, 1, \cdots, i-1,
\hat{i}, i+1, \cdots, j-1, \hat{j}, j+1, \cdots, n)$.

\noindent
(C2) The right action of $\Bbbk \S_n$ on $\ip(n)$: for each $\sigma\in \S_n$,

\begin{itemize}
\item[(C21)]
$\1_n\ast \sigma = \1_n$,
\item[(C22)]
$\varphi^{(n)}_i(a) \ast \sigma =  \varphi^{(n)}_{\sigma^{-1}(i)}(a)$,
\item[(C23)]
$\psi^{(n)}_{ij}(\mu)\ast \sigma=
\begin{cases}
\psi^{(n)}_{\sigma^{-1}(i), \sigma^{-1}(j)}(\mu),
  & \sigma^{-1}(i)<\sigma^{-1}(j), \\
\psi^{(n)}_{\sigma^{-1}(j), \sigma^{-1}(i)}(\mu\ast (21)),
  & \sigma^{-1}(i)>\sigma^{-1}(j).
\end{cases}$
\end{itemize}

\noindent
(C3) The partial composition $\ip(m) \underset{s}{\circ} \ip(n) \to \ip(m+n-1)$:

\begin{itemize}
\item[(C31)]
$\1_m \underset{s}\circ \1_n=\1_{m+n-1}$.
\item[(C32)]
$\1_m \underset{s}\circ \varphi^{(n)}_{i}(a)=\varphi^{(m+n-1)}_{s+i-1}(a)$.
\item[(C33)]
$\1_m \underset{s}\circ \psi^{(n)}_{i_1,i_2}(\mu )
=\psi^{(m+n-1)}_{s+i_1-1,s+i_2-1}(\mu)$.
\item[(C34)]
$\varphi^{(m)}_i(a) \underset{s}{\circ} \1_n=
\begin{cases}
\varphi^{(m+n-1)}_i(a), & i<s,\\
\sum\limits_{k=i}^{i+n-1}\varphi^{(m+n-1)}_{k}(a)
+\sum\limits_{i\le k_1<k_2\le i+n-1}\psi^{(m+n-1)}_{k_1k_2}(f(a)), & i=s, \\
\varphi^{(m+n-1)}_{i+n-1}(a), & i>s.
\end{cases}$
\item[(C35)]
$\varphi^{(m)}_i(a) \underset{s}{\circ} \varphi^{(n)}_j(b)=
\begin{cases}
\psi^{(m+n-1)}_{i, s+j-1}(g(a, b)), & i<s, \\
\begin{array}{ll}
\sum\limits_{k=i}^{i+j-2} \psi^{(m+n-1)}_{k, i+j-1}(g(a, b)
+f(a)\underset{2}\cdot b)+\varphi^{(m+n-1)}_{i+j-1}(ab)\\
+\sum\limits_{k=i+j}^{i+n-1} \psi^{(m+n-1)}_{i+j-1, k}(g(b, a)
+f(a)\underset{1}\cdot b),
\end{array}& i=s, \\
\psi^{(m+n-1)}_{s+j-1, i+n-1}(g(b, a)), & i>s.
\end{cases}$
\item[(C36)]
$\varphi^{(m)}_i(a)\underset{s}{\circ} \psi^{(n)}_{j_1j_2}(\mu)=
\begin{cases}
0, & i\neq s, \\
\psi^{(m+n-1)}_{i+j_1-1, i+j_2-1}(a\mu), & i=s.\\
\end{cases}$
\item[(C37)]
$\psi^{(m)}_{i_1i_2}(\mu) \underset{s}{\circ} \1_n=
\begin{cases}
\psi^{(m+n-1)}_{i_1+n-1, i_2+n-1}(\mu), & 1\le s<i_1, \\
\sum\limits_{k=i_1}^{i_1+n-1} \psi^{(m+n-1)}_{k, i_2+n-1}(\mu), & s=i_1, \\
\psi^{(m+n-1)}_{i_1, i_2+n-1}(\mu), & i_1<s<i_2,\\
\sum\limits_{k=i_2}^{i_2+n-1} \psi^{(m+n-1)}_{i_1, k}(\mu), & s=i_2,\\
\psi^{(m+n-1)}_{i_1i_2}(\mu), & i_2<s\le m.
\end{cases}$
\item[(C38)]
$\psi^{(m)}_{i_1i_2}(\mu) \underset{s}{\circ} \varphi^{(n)}_j(b)=
\begin{cases}
0, & s\neq i_1, i_2, \\
\psi^{(m+n-1)}_{i_1+j-1, i_2+n-1}(\mu \underset{1}\cdot b), & s=i_1, \\
\psi^{(m+n-1)}_{i_1, i_2+j-1}(\mu \underset{2}\cdot b), & s=i_2.\\
\end{cases}$
\item[(C39)]
$\psi^{(m)}_{i_1i_2}(\mu) \underset{s}{\circ} \psi^{(n)}_{j_1j_2}(\nu) =0.$
\end{itemize}

\begin{theorem}
\label{xxthm4.1}
Retain the above notation. Let $(A,M,f,g)$ be a trident algebra. Then
$\ip:=F(A,M,f,g)$ is a 2-unitary $\Com$-augmented operad of
GK-dimension 3.
\end{theorem}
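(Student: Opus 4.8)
The plan is to take the explicit data (C11)--(C39) as the definition of a graded $\S$-module equipped with identity $\1=\1_1$ and partial compositions $\ucr s$, and to verify directly that this data satisfies the operad axioms (OP1)--(OP3); the three stated properties then follow by inspection. A first point to settle is well-definedness: one checks that the spanning sets in (C11)--(C14) are genuine direct sums and that $\varphi^n_k$, $\psi^n_{ij}$ are isomorphisms onto their summands, so that (C21)--(C23) and (C31)--(C39) extend unambiguously by $\Bbbk$-linearity. This is exactly the normal-form statement of Theorem \ref{xxthm1.7} and Lemma \ref{xxlem2.4}: every element of $\ip(n)$ is uniquely a combination of $\1_n$, the $\varphi^n_k(a)=\1_n\ucr k a$, and the $\psi^n_{ij}(\mu)=(\1_{n-1}\ucr1\mu)\ast c_{ij}$. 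Having fixed this basis I would also record that the defining relations listed before (C1) hold, which simultaneously identifies the constructed object with the presented operad and shows the presentation is complete, since the explicit basis already has the predicted dimensions and no further collapsing can occur.

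Next I would dispatch the easy axioms. Axiom (OP1) is immediate by specializing the composition formulas to the identity $\1_1$ of arity one, where every piecewise sum in (C34) and (C37) degenerates to a single term. Axiom (OP3) (equivariance) is pure bookkeeping: the formulas (C21)--(C23) merely permute the position labels $k$ and $\{i,j\}$, with an $\S_2$-twist $\mu\mapsto\mu\ast(21)$ when a pair is reordered, and \eqref{E1.1.3} together with \eqref{E1.1.4} then amount to tracking how the block permutations $\sigma'=1_m\ucr i\sigma$ and $\phi''=\phi\ucr i 1_n$ act on those labels. The substance is the associativity axiom (OP2), namely \eqref{E1.1.1} and \eqref{E1.1.2}, which I would verify on triples $\lambda,\mu,\nu$ drawn from the three basis types $\1$, $\varphi(a)$, $\psi(\mu)$.

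The associativity check splits into cases according to the types of $\lambda,\mu,\nu$ and the relative positions of the composition slots. Most cases are index bookkeeping or vanish outright, since by (C36) and (C39) a composition into an $M$-slot from the wrong position is zero. The genuinely nontrivial cases are precisely those in which the trident structure is used, and each collapses to a trident-system identity: comparing the two ways of forming a composite of type $\varphi(a)$ with $\varphi(b)$ through $\1_2$ produces \eqref{E3.4.1}; the case pitting $\psi(f(a))$ against two $\varphi$'s, combined with the congruence-action relation \eqref{E3.2.1}, produces \eqref{E3.4.2}; and the cases where an $M$-element meets the transposition are governed by the $\Bbbk\S_2$-module and bimodule axioms \eqref{E3.1.1} and \eqref{E3.1.2}. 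The associativity $\1_2\ucr1\1_2=\1_2\ucr2\1_2$ and symmetry $\1_2\ast(12)=\1_2$ built into the relations yield, via Lemma \ref{xxlem2.3}(2), the clean identities $\1_n\circ(\1_{k_1},\dots,\1_{k_n})=\1_{k_1+\cdots+k_n}$ that make all $\1_n$-cases transparent. I expect this exhaustive case analysis to be the main obstacle: not conceptually deep, but long, which is why the full verification belongs in the appendix.

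Finally the extra structure and the dimension are read off directly. Since $\ip(0)=\Bbbk\1_0$ the operad is unitary, and the relation $\1_2\ucr i\1_0=\1_1$ is precisely \eqref{E1.5.1}, so $\1_2$ is a $2$-unit and $\ip$ is $2$-unitary. The relations $\1_2\ast(12)=\1_2$ and $\1_2\ucr1\1_2=\1_2\ucr2\1_2$ exhibit $\1_2$ as a symmetric $2a$-unit in the sense of Definition \ref{xxdef1.5}(5); the assignment $\1_n\mapsto\1_n$ is then a morphism $\Com\to\ip$, compatibility with partial composition being Lemma \ref{xxlem2.3}(2), so $\ip$ is $\Com$-augmented. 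For the GK-dimension, (C14) gives $\dim\ip(n)=1+nd+\binom{n}{2}m$ with $d=\dim\bar A$ and $m=\dim M$; a short restriction-operator computation identifies $\up1(1)=\bar A$, $\up2(2)=M$, and $\up k(k)=0$ for $k\ge3$, whence $f_\ip(1)=d$, $f_\ip(2)=m$, and $f_\ip(k)=0$ for $k\ge3$. Because $M$ is nontrivial by Definition \ref{xxdef3.4}(2) we have $m\ge1$, and Lemma \ref{xxlem1.8}(2) yields $\gkdim\ip=3$.
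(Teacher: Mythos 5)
Your proposal takes essentially the same route as the paper's appendix proof: direct verification of (OP1)--(OP3) for the explicit data (C11)--(C39) by exhaustive case analysis on the three basis types $\1_n$, $\varphi^{(n)}_k(a)$, $\psi^{(n)}_{ij}(\mu)$ (with the trident identities \eqref{E3.4.1}--\eqref{E3.4.2} doing the work in the genuinely nontrivial cases, exactly as in the paper's Cases 10--13), followed by reading off $2$-unitarity and the $\Com$-augmentation $\1_m\mapsto\1_m$ from (C21) and (C31). The only cosmetic difference is at the end, where you compute $\gkdim\ip=3$ via the truncation ideals and Lemma \ref{xxlem1.8}(2) while the paper writes down $G_\ip(t)$ directly from the basis count $\dim\ip(n)=1+dn+m\binom{n}{2}$; these are trivially equivalent by Lemma \ref{xxlem1.8}, and your explicit use of the nontriviality of $M$ to get $m\ge 1$ is a point the paper leaves tacit.
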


A tedious proof of Theorem \ref{xxthm4.1} is given in the final section.

\subsection{Classification of 2-unitary operads of GK-dimension 3}
\label{xxsec4.2}
Now we are ready to prove the main theorem.

\begin{theorem}
\label{xxthm4.2}
The category $\C$ consisting of trident algebras $(A, M, f, g)$ is
equivalent to the category $\mathcal{D}$ of $\Com$-augmented
operads of GK-dimension 3.
\end{theorem}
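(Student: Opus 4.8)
The plan is to construct a pair of mutually inverse functors $F\colon \C \to \mathcal{D}$ and $G\colon \mathcal{D}\to \C$ and verify they give an equivalence of categories. The functor $F$ is already available: given a trident algebra $(A,M,f,g)$, Theorem \ref{xxthm4.1} produces a $\Com$-augmented operad $F(A,M,f,g)$ of GK-dimension $3$, and the explicit formulas (C1)--(C39) show how $F$ acts on the low-arity data. I would first check that $F$ is functorial: a morphism of trident systems $(\alpha,\beta)\colon (A,M,f,g)\to (A',M',f',g')$ induces an operad morphism by setting it to be $\alpha$ on arity $1$, to be $\id\oplus\beta$ on $\Bbbk\1_2\oplus M$ in arity $2$ (preserving the symmetric $2a$-unit), and extending via the bases of Lemma \ref{xxlem2.4}; compatibility with the partial compositions reduces to the commuting squares in the definition of a morphism in $\C$ together with the defining relations of $F$.

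The functor $G$ runs the construction in reverse. Given a $\Com$-augmented operad $\ip$ of GK-dimension $3$, set $A:=\ip(1)$; this is a unital associative algebra via $\ucr{1}$, and the canonical morphism $\epsilon\colon \ip\to\Com$ of \eqref{E1.8.1} furnishes the augmentation $A=\Bbbk\1_1\oplus\bar A$ with $\bar A={^1\iu}(1)$. Set $M:={^2\iu}(2)$ and $E:=\ip(2)$. Using the restriction operators and the partial compositions $\ip(1)\ucr{1}\ip(2)\to\ip(2)$ and $\ip(2)\circ(\ip(1),\ip(1))\to\ip(1)$, one equips $E$ with its $(A,A^{\otimes 2}\#\S_2)$-bimodule structure (the $\S_2$-action is the symmetric-group action $\ast$, and Remark \ref{xxrem3.2} reconciles the two right $A$-actions via the congruence action, using Lemma \ref{xxlem2.1}\eqref{E2.1.3} to see that the $M$-on-$M$ compositions vanish). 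Fixing the symmetric $2a$-unit $\1_2\in E$ with $\pi(\1_2)=[1_A\otimes 1_A]$, I would define $f$ and $g$ exactly as in the proof of Proposition \ref{xxpro3.5}, namely $f(a)=a\circ\1_2-\1_2\ucr{1}a-\1_2\ucr{2}a$ and $g(a,b)=\1_2\circ(a,b)$, and verify that identities \eqref{E3.4.1}--\eqref{E3.4.2} hold by unwinding the operadic associativity axioms (OP2); this is essentially the content already extracted in Lemma \ref{xxlem2.1} and Proposition \ref{xxpro3.5}. Thus $G(\ip)=(A,M,f,g)$ is a trident system.

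The remaining work is to produce natural isomorphisms $GF\cong \id_{\C}$ and $FG\cong\id_{\mathcal{D}}$. The isomorphism $GF\cong\id_{\C}$ is almost immediate from the explicit description (C11)--(C13): applying $G$ to $F(A,M,f,g)$ recovers $\ip(1)=A$, $\ip(2)=\Bbbk\1_2\oplus(\bar A_1^{(2)}\oplus\bar A_2^{(2)})\oplus M$ with $E=\ip(2)$, and the formulas (C34)--(C35) recover precisely the maps $f$ and $g$, so the identity maps assemble into a natural isomorphism. For $FG\cong\id_{\mathcal{D}}$ the key input is Lemma \ref{xxlem2.4}, which provides, for each $n$, a canonical $\Bbbk$-basis of $\ip(n)$ indexed by $\1_n$, by $\delta_{(i),j}^n$ (coming from $\bar A$), and by $\tau_{(i_1i_2),s}^n$ (coming from $M$); this is exactly the indexing set underlying the decomposition (C14) of $F(G(\ip))$. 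I would define the comparison morphism on these bases and verify it commutes with all partial compositions by checking it against (C31)--(C39); the GK-dimension-$3$ collapsing relations of Lemma \ref{xxlem2.1} guarantee that no higher-arity corrections appear, so the map is a well-defined operad isomorphism, natural in $\ip$.

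The main obstacle is the verification that the comparison map for $FG\cong\id_{\mathcal{D}}$ is compatible with \emph{all} the partial compositions, i.e. that the structure constants of an arbitrary $\Com$-augmented GK-dimension-$3$ operad, expressed in the Lemma \ref{xxlem2.4} basis, are forced to coincide with the explicit constants (C31)--(C39) determined by $(A,M,f,g)$. Concretely, one must show that every composition $\theta\ucr{s}\theta'$ of basis elements is completely determined by the arity-$\le 2$ data $(A,M,f,g)$ together with the relations of Lemma \ref{xxlem2.1}; this is where the associativity axioms (OP2) and equivariance (OP3) must be used repeatedly to reduce arbitrary compositions to the generating ones. This is the same computation that makes the proof of Theorem \ref{xxthm4.1} tedious, and I expect the bulk of the effort here to be bookkeeping of indices under the cycles $c_i$ and $c_{i_1i_2}$ rather than any conceptual difficulty.
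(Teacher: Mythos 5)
Your proposal is correct and follows essentially the same route as the paper: the same functors $\mathcal{F}$ (via Theorem \ref{xxthm4.1}, acting on morphisms through the bases $\1_n,\varphi^{(n)}_k,\psi^{(n)}_{ij}$) and $\mathcal{G}$ (with $A=\ip(1)$, $M={^2\iu}(2)$, and the identical formulas $f(a)=a\circ\1_2-\1_2\ucr{1}a-\1_2\ucr{2}a$, $g(a,b)=(\1_2\ucr{1}a)\ucr{2}b$), with \eqref{E3.4.1}--\eqref{E3.4.2} verified from (OP2) and the two natural isomorphisms obtained from the explicit construction and Lemma \ref{xxlem2.4} exactly as in the paper. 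Your closing paragraph correctly identifies the one step the paper compresses into a single sentence --- that Lemmas \ref{xxlem2.1} and \ref{xxlem2.4} force the structure constants of any $\Com$-augmented GK-dimension-$3$ operad to coincide with (C31)--(C39) --- so no gap beyond the paper's own level of detail remains.
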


\begin{proof}
Define a functor ${\mathcal{F}}: \mathcal{C} \longrightarrow
\mathcal{D}$ as follows:
\begin{enumerate}
\item[(i)]
For any trident algebra $(A, M, f, g)$,
$${\mathcal{F}}(A, M, f, g):=F(A,M,f,g)$$
where $F(A,M,f,g)$ is given in Theorem \ref{xxthm4.1}, namely,
$${\mathcal{F}}(A, M, f, g)
=\{\ip(n)\}_{n\geq 0}=\{\Bbbk \1_n\oplus \bigoplus\limits_{k=1}^n
\bar A^{(n)}_k \oplus 
\bigoplus\limits_{1\le i<j\le n} M^{(n)}_{ij}\}_{n\geq 0}.$$
By Theorem \ref{xxthm4.1}, ${\mathcal{F}}(A, M, f, g)$ is a
$\Com$-augmented operad of GK-dimension 3.
\item[(ii)]
For a morphism
$(\alpha, \beta)\colon (A, M, f, g)\to (A', M', f', g')$, we
define an operadic morphism
$$\Phi={\mathcal{F}}(\alpha, \beta)\colon\{\ip(n)\} \to \{\ip'(n)\}$$
as follows:
\begin{align*}
\Phi_n(\1_n)\colon
    &=\1'_n;\\
\Phi_n(\varphi^{n}_k(a))\colon
    &={\varphi' }^n_k(\alpha(a)), \ \ {\rm for}\ a\in \bar{A}; \\
\Phi_n(\psi^n_{ij}(\mu))\colon
    &={\psi'}^n_{ij} (\beta(\mu)), \ \ {\rm for}\ \mu\in M.
\end{align*}
\end{enumerate}

By a direct calculation, it follows easily from (C21)-(C23)
and (C31)-(C39) that $\Phi$ is a morphism of operads since
$(\alpha, \beta)$ is a morphism in the category $\C$.

Conversely, we define a functor ${\mathcal G}: \mathcal{D}
\longrightarrow \mathcal{C}$ as follows: for an object $\ip$
in category $\mathcal{D}$, we define
$${\mathcal G}(\ip)=(A,M,f,g)$$
where $A=\ip(1)$, $M=\up{2}_\ip(2)$, and
\begin{align*}
f: \quad & \up{1}(1) \rightarrow \up{2}(2),
& a\mapsto a\circ\1_2-\1_2\underset{1}{\circ}a
  -\1_2\underset{2}{\circ}a; \\
g: \quad & \up{1}(1) \ot \up{1}(1) \rightarrow \up{2}(2),
& (a, b)\mapsto (\1_2\underset{1}{\circ}a)\underset{2}{\circ}b.
\end{align*}

We show next that $(A,M,f,g)$ is a trident algebra. By
definitions, $A:=\ip(1)$ is an associative $\Bbbk$-algebra
with identity $\1_1$. Considering the map
$\pi^{\varnothing}: \ip(1)\rightarrow \ip(0)=\Bbbk \1_0,
\theta\mapsto \theta\circ \1_0$, we know that $A$ is an
augmented algebra with the augmentation ideal
$\Ker \pi^{\varnothing}=\up{1}_\ip(1)$. By the definition of
truncation ideals, $M:=\up{2}_\ip(2)$ is a $\Bbbk\S_2$-submodule
of $\ip(2)$, and  is an $(A, A\ot A)$-bimodule with the module
actions given by the related composition map. Since $\ip$ is a
$\Com$-augmented (hence 2-unitary) operad of GK-dimension 3, we
have
\begin{equation}
\notag 
(a\circ\1_2-\1_2\underset{1}{\circ}a-\1_2\underset{2}{\circ}a)\ucr{i} \1_0 =0,
\end{equation}
for all $a\in {\bar A}$ and $i=1,2$, and
\begin{equation}
\notag 
( (\1_2\underset{1}{\circ}a)\underset{2}{\circ}b)\ucr{i} \1_0 =0,
\end{equation}
for all $a,b\in {\bar A}$ and $i=1,2$. Therefore
$a\circ\1_2-\1_2\underset{1}{\circ}a-\1_2
\underset{2}{\circ}a$ and
$(\1_2\underset{1}{\circ}a)\underset{2}{\circ}b$ are
in $M$. Therefore $f$ maps from ${\bar A}\to M$ and
$g$ maps from ${\bar A}^{\otimes 2}\to M$.

For $a,b\in A$ and $\mu\in M$, let $a\cdot \mu$ be $a\circ \mu$
and $\mu\cdot (a\otimes b)=\mu\circ(a,b)$ both of which are in
$M$. Then, for all $a\in A$, $\mu\in M$,
$$
a\cdot (\mu\ast (2,1))= a\underset{1}{\circ}(\mu\ast (2,1))
= (a\underset{1}{\circ}\mu)\ast(\1_1\underset{1}{\circ} (2,1))
= (a\cdot \mu)\ast (2,1)
$$
which shows that \eqref{E3.1.1} holds. For $a,b\in A$, we have
\begin{align*}
(\mu\ast (2,1))\cdot (a\otimes b)=&(\mu\ast (2,1))\underset{1}{\circ}a\underset{2}{\circ}b
=((\mu\underset{2}{\circ}a)\ast ((2,1)\underset{1}{\circ}\1_1))\underset{2}{\circ}b\\
=&((\mu\underset{2}{\circ}a)\underset{1}{\circ}b)\ast ((2,1)\underset{2}{\circ}\1_1))
= (\mu\cdot(b\otimes a))\ast (2,1),
\end{align*}
which shows that \eqref{E3.1.2} holds. Hence $M$ is a trident $A$-module.

For $\bar{a},\bar{b}\in {\bar A}$,
\begin{align*}
f(\bar{a}\bar{b})
=&(\bar{a}\bar{b})\circ\1_2-\1_2\underset{1}{\circ}
 (\bar{a}\bar{b})-\1_2\underset{2}{\circ}(\bar{a}\bar{b})\\
=&\bar{a}\circ (\bar{b}\circ\1_2-\1_2\underset{1}{\circ}\bar{b}
 -\1_2\underset{2}{\circ}\bar{b})
 +(\bar{a}\circ\1_2-\1_2\underset{1}{\circ}\bar{a}
 -\1_2\underset{2}{\circ}\bar{a})\underset{1}{\circ}\bar{b}\\
&+(\bar{a}\circ\1_2-\1_2\underset{1}{\circ}\bar{a}
 -\1_2\underset{2}{\circ}\bar{a})\underset{2}{\circ}\bar{b}
 +(\1_2\underset{1}{\circ}\bar{a})\underset{2}{\circ}\bar{b}
 +(\1_2\underset{1}{\circ}\bar{b})\underset{2}{\circ}\bar{a}\\
=&\bar{a}f(\bar{b})+f(\bar{a})\underset{1}{\circ}\bar{b}
+f(\bar{a})\underset{2}{\circ}\bar{b}+g(\bar{a}, \bar{b})
+g(\bar{b}, \bar{a}).
\end{align*}
Hence \eqref{E3.4.1} holds.
For $\bar{a},\bar{b}, \bar{c}\in {\bar A}$,
\begin{align*}
f(\bar{a})\cdot(\bar{b},\bar{c})
=&((\bar{a}\circ\1_2-\1_2\underset{1}{\circ}\bar{a}
 -\1_2\underset{2}{\circ}\bar{a})\underset{1}{\circ}\bar{b})
 \underset{2}{\circ}\bar{c}\\
=&\bar{a}((\1_2\underset{1}{\circ}\bar{b})\underset{2}{\circ}\bar{c})
-(\1_2\underset{1}{\circ} (\bar{a}\bar{b}))\underset{2}{\circ}\bar{c}
-(\1_2\underset{1}{\circ}\bar{b})\underset{2}{\circ}(\bar{a}\bar{c})\\
=&\bar{a}g(\bar{b}, \bar{c})-g(\bar{a}\bar{b}, \bar{c})
-g(\bar{b}, \bar{a}\bar{c}).
\end{align*}
Hence \eqref{E3.4.2} holds. Therefore $(A, M, f, g)$ is a
trident algebra. It follows that ${\mathcal G}(\ip)$ is an 
object in the category $\C$.

For the operad morphism $\Psi \colon \ip \to \ip'$, we define
the morphism
\[(\alpha, \beta)={\mathcal G}(\Psi)\colon (\ip(1),\up{2}_\ip(2),f,g)
\to (\ip'(1),\up{2}'_{\ip'}(2),f',g')\]
as follows:
\[{\mathcal G}(\Psi): =(\Psi(1),\Psi(2)|_{\up{2}(2)}).\]
Therefore ${\mathcal G}$ is a functor from ${\mathcal D}\to \C$.

Finally, it is clear from the definition that ${\mathcal G}
{\mathcal F}$ is the identity and it follows from Lemma \ref{xxlem2.4}
that ${\mathcal F}{\mathcal G}$ is naturally isomorphic to the identity.
The assertion follows.
\end{proof}

\section{Comments, examples, and remarks}
\label{xxsec5}

We refer to \cite{LV} for the definition of Hadamard product
$-\underset{{\rm H}}{\otimes}-$ and a Hopf operad. Recall that 
the \textit{Hadamard product} $\ip\underset{\textrm{H}}{\otimes} \iq$ 
of the operads $\ip$ and $\iq$ is defined to be 
\begin{align*}
(\ip\underset{\textrm{H}}{\otimes} \iq)(n)= \ip(n)\otimes \iq(n),
\end{align*}
for all $n\ge 0$ with the partial composition 
\begin{align*}
 (\mu_1\otimes \nu_1)\ucr{i} (\mu_2\otimes \nu_2)
=(\mu_1\ucr{i}\mu_2)\otimes (\nu_1\ucr{i}\nu_2),
\end{align*} 
for $\mu_1\otimes \nu_1\in (\ip\underset{\textrm{H}}{\otimes} \iq)(m)$,
$\mu_2\otimes \nu_2\in (\ip\underset{\textrm{H}}{\otimes} \iq)(n)$, 
and $m\ge 1, n\ge 0, 1\le i\le m$. Clearly, the operad $\Com$ is 
obviously a unit for Hadamard product.

A \textit{Hopf operad} is a symmetric operad $\ip$ with a morphism 
of operads $\Delta \colon \ip \to \ip\underset{\textrm{H}}\otimes \ip$
called the coproduct of $\ip$ and a morphism of operads
$\epsilon_\ip\colon \ip \to \Com$ called the counit,
which is supposed to be coassociative and counital. 

\begin{definition}
\label{xxdef5.1}
Let $\ip$ be a Hopf operad. We say that $\ip$ is a
{\it $\Com$-augmented Hopf operad} if
\begin{enumerate}
\item[(1)]
$\ip$ is $\Com$-augmented and the composition
$$\Com \xrightarrow{\; u_{\ip}\; } \ip \xrightarrow{\; \epsilon \; } \Com$$
is the identity map, and
\item[(2)]
the following diagram is commutative
$$\begin{CD}
\Com @> \cong >> \Com \hot \Com\\
@V u_{\ip} VV @VV u_{\ip}\hot u_{\ip} V\\
\ip @>> \Delta> \ip\hot \ip.
\end{CD}$$
\end{enumerate}
\end{definition}

\begin{remark}
\label{xxrem5.2}
A Hopf operad satisfying the condition (1) in Definition 
\ref{xxdef5.1}  is also called a unital augmented connected 
Hopf operad in \cite[Definition 2.5]{Kh}.
\end{remark}

\begin{proposition}
\label{xxpro5.3}
Let $\ip$ be a $\Com$-augmented
Hopf operad of GK-dimension $\leq 2$. Then $\ip=\Com$.
\end{proposition}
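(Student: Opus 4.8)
The plan is to reduce to the explicit operad of Section~\ref{xxsec4} and then exploit the interaction between the coproduct and a single arity-$2$ composition relation. Since $\gkdim\ip\le 2$, Lemma~\ref{xxlem1.8}(2) gives $\up{2}=0$, so the associated trident algebra has $M=\up{2}(2)=0$ and $f=g=0$; by Theorem~\ref{xxthm4.2} we may identify $\ip$ with $F(A,0,0,0)$, where $A=\ip(1)$ is an augmented algebra with augmentation ideal $\bar A=\up{1}(1)$. In particular
\[
\ip(2)=\Bbbk\1_2\oplus \bar A^{(2)}_1\oplus \bar A^{(2)}_2,
\qquad \bar A^{(2)}_i=\varphi^{(2)}_i(\bar A),
\]
and (C35) together with the definition of $g$ give the crucial relation $\1_2\circ(a,b)=g(a,b)=0$ for all $a,b\in\bar A$. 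It then remains to prove $\bar A=0$, since $F(\Bbbk\1_1,0,0,0)=\Com$.

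Next I would record the Hopf-theoretic input at low arity. First, the counit $\epsilon\colon\ip\to\Com$ coincides with the operadic augmentation: condition (1) of Definition~\ref{xxdef5.1} forces $\epsilon_0(\1_0)=\1_0$, and applying $\epsilon$ to the defining relation $a\ucr{1}\1_0=0$ of $\bar A=\up{1}(1)$ yields $\epsilon_1(a)=0$ for every $a\in\bar A$. Thus $\varepsilon:=\epsilon_1$ is exactly the augmentation of $A$, and $\Delta_1\colon A\to A\otimes A$ makes $A$ a counital coalgebra; writing $\Delta_1(a)=\sum a_{(1)}\otimes a_{(2)}$ for $a\in\bar A$, the counit identities show that the reduced coproduct $\bar\Delta_1(a)=\Delta_1(a)-\1\otimes a-a\otimes\1$ lies in $\bar A\otimes\bar A$. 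Second, condition (2) of Definition~\ref{xxdef5.1} (the $\Com$-augmented compatibility) says precisely that $\1_2$ is grouplike, i.e. $\Delta_2(\1_2)=\1_2\otimes\1_2$ in $(\ip\hot\ip)(2)=\ip(2)\otimes\ip(2)$.

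The heart of the argument is to apply $\Delta$ to the relation $\1_2\circ(a,b)=0$. On the one hand $\Delta_2(0)=0$. On the other hand, since $\Delta$ is a morphism of operads and the Hadamard composition is computed componentwise,
\[
\Delta_2\bigl(\1_2\circ(a,b)\bigr)
=(\1_2\otimes\1_2)\circ\bigl(\Delta_1 a,\Delta_1 b\bigr)
=\sum_{(a),(b)}\bigl(\1_2\circ(a_{(1)},b_{(1)})\bigr)\otimes\bigl(\1_2\circ(a_{(2)},b_{(2)})\bigr).
\]
Here, for $c,d\in A$, the composition rules give $\1_2\circ(c,d)=\varepsilon(c)\varepsilon(d)\1_2+\varepsilon(c)\varphi^{(2)}_2(\bar d)+\varepsilon(d)\varphi^{(2)}_1(\bar c)$, which in particular vanishes when both $c,d\in\bar A$. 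Feeding in $\Delta_1 a=\1\otimes a+a\otimes\1+\bar\Delta_1 a$ and $\Delta_1 b=\1\otimes b+b\otimes\1+\bar\Delta_1 b$, and using $\bar\Delta_1(a),\bar\Delta_1(b)\in\bar A\otimes\bar A$ together with the counit identities, every term containing a reduced factor or a pairing of two elements of $\bar A$ dies, and only two survive, giving
\[
\varphi^{(2)}_1(a)\otimes\varphi^{(2)}_2(b)+\varphi^{(2)}_2(b)\otimes\varphi^{(2)}_1(a)=0.
\]
In the direct-sum decomposition of $\ip(2)\otimes\ip(2)$ the summands $\bar A^{(2)}_1\otimes\bar A^{(2)}_2$ and $\bar A^{(2)}_2\otimes\bar A^{(2)}_1$ are distinct, so the two terms must vanish separately; as $\varphi^{(2)}_1,\varphi^{(2)}_2$ are injective this forces $\bar A\otimes\bar A=0$, hence $\bar A=0$ and $\ip=\Com$.

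I expect the main obstacle to be conceptual rather than computational: the coproduct makes $A=\ip(1)$ a finite-dimensional bialgebra, and such bialgebras are plentiful (e.g. $\Bbbk[\Z/2]$), so the arity-$1$ data alone cannot force $\bar A=0$. The decisive move is to bring in the arity-$2$ relation $g=0$ (equivalently $\up{2}(2)=0$) and to observe that applying the componentwise coproduct to it, with $\1_2$ grouplike, isolates the ``off-diagonal'' term $\varphi^{(2)}_1(a)\otimes\varphi^{(2)}_2(b)$, which lands in a summand of $\ip(2)^{\otimes 2}$ where nothing can cancel it. Some care is needed in bookkeeping the ninefold Sweedler expansion of $\1_2\circ(\Delta_1a,\Delta_1 b)$ and in checking that $\epsilon$ really is the operadic augmentation, but no genuinely hard estimate is involved.
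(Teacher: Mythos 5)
Your proof is correct, and it takes a genuinely different route to the contradiction than the paper does. Both arguments start identically: reduce to $\ip=F(A,0,0,0)$ via the classification, and use Definition \ref{xxdef5.1}(2) to make $\1_2$ grouplike. But the paper then runs a two-stage argument: it identifies $\ip\hot\ip$ with $F(A\odot A)$ via Example \ref{xxex3.8}, invokes the functoriality of Theorem \ref{xxthm4.2} to regard $\Delta$ as a morphism of trident algebras $(A,0,0,0)\to A\odot A$, uses the vanishing of the source's $f$ together with the explicit $f$ of $A\odot A$ to conclude that every $a\in\bar A$ is primitive, and only then derives a contradiction from an arity-$4$ composition of primitive elements. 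You stay entirely in arity $\le 2$: applying $\Delta_2$ componentwise to the single relation $\1_2\circ(a,b)=0$ and expanding by Sweedler, the grouplikeness of $\1_2$ and the counit identity (which you verify carefully, including $\epsilon_1|_{\bar A}=0$) kill seven of the nine terms and isolate $\varphi^{(2)}_1(a)\otimes\varphi^{(2)}_2(b)+\varphi^{(2)}_2(b)\otimes\varphi^{(2)}_1(a)$, which lies in two distinct direct summands of $\ip(2)^{\otimes 2}$ and so forces $a\otimes b=0$ for all $a,b$, hence $\bar A=0$ in one stroke, with no primitivity step. In trident language, you exploit the $g$-component of $A\odot A$ where the paper exploits the $f$-component. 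Your route buys two things: it avoids both Example \ref{xxex3.8} and the categorical machinery applied to $\Delta$, and it sidesteps the paper's delicate final computation, which in fact contains a slip --- by (C35), $\varphi^{(2)}_1(a)\ucr{1}\varphi^{(3)}_1(a)$ equals $\varphi^{(4)}_1(a^2)$, not $0$, so the displayed coproduct there omits the terms $\varphi^{(4)}_1(a^2)\otimes \1_4$ and $\1_4\otimes \varphi^{(4)}_1(a^2)$ (the contradiction survives after comparing the two evaluations of $\Delta(\varphi^{(4)}_1(a^2))$, but your argument never goes near this). One shared gloss, for which no fault attaches to you since the paper does exactly the same: Theorem \ref{xxthm4.2} is stated for GK-dimension exactly $3$, so identifying a GK-dimension $\le 2$ operad with $F(A,0,0,0)$ strictly requires the $M=0$ degeneration of the construction (equivalently, Theorem \ref{xxthm0.1} together with the $\Com$-augmentation).
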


\begin{proof} Note that $\Delta: \ip \to \ip\hot \ip$ is an 
operadic morphism. Write $\ip$ as $F(A,0,0,0)$ as given by 
Theorem \ref{xxthm0.2}. Suppose ${\bar A}\neq 0$. Then
$\gkdim \ip=2$ and $\gkdim \ip\hot \ip=3$. Since
$\ip\hot \ip$ has GK-dimension 3, it is of the form
$F(A\odot A)$ where $A\odot A$ is given in Example
\ref{xxex3.8}. By Theorem \ref{xxthm4.2},
$\Delta$ induces a morphism of trident algebras
$(A,0,0,0)\to A\odot A:=(A\otimes A, M,f,g)$. Then
$f \Delta\mid_{\ip(1)}: A\to A\otimes A \to M$ is zero. 
We claim that $A=\Bbbk$. If not, let
$0\neq a\in {\bar A}$ and write $\Delta\mid_{\ip(1)}(a)=
1\otimes a+ a\otimes 1+ \sum a_{(1)}\otimes a_{(2)}$ where
$a_{(1)}, a_{(2)}\in {\bar A}$. Then, by three equations in 
Example \ref{xxex3.8}(1),
$$\begin{aligned}
0&=f\Delta(a)=f( 1\otimes a+ a\otimes 1+ \sum a_{(1)}\otimes
a_{(2)})\\
&=\sum ((a_{(1)}\otimes 1_A) \otimes (1_A\otimes a_{(2)})
+(1_A \otimes a_{(1)}) \otimes (a_{(2)}\otimes 1_A)).
\end{aligned}
$$
Therefore $\sum a_{(1)}\otimes a_{(2)}=0$, and consequently,
$a$ is a primitive element. By Definition \ref{xxdef5.1}(2),
$\1_n$ is group-like, i.e., $\Delta(\1_n)=\1_n \otimes \1_n$ 
for all $n$. Since each $a$ is primitive, it follows from 
(C32) that each $\varphi_i^{(n)}(a)$ is primitive,
i.e., $\Delta(\varphi_i^{(n)}(a))=
\varphi_i^{(n)}(a)\otimes \1_n+\1_n\otimes \varphi_i^{(n)}(a)$
for all $i,n$. Since $\ip$ has GK-dimension $\leq 2$, 
$\varphi_1^{(2)}(a) \ucr{1} \varphi_1^{(3)}(a)=0$ by (C35). But
$$\begin{aligned}
\Delta(\varphi_1^{(2)}(a) \ucr{1} \varphi_1^{(3)}(a))
&=
(\varphi_1^{(2)}(a)\otimes \1_2+\1_2\otimes \varphi_1^{(2)}(a))
\ucr{1}
(\varphi_1^{(3)}(a)\otimes \1_3+\1_3\otimes \varphi_1^{(3)}(a))\\
&=(\varphi_1^{(2)}(a)\otimes \1_2)\ucr{1}
(\1_3\otimes \varphi_1^{(3)}(a))+
(\1_2\otimes \varphi_1^{(2)}(a))\ucr{1}
(\varphi_1^{(3)}(a)\otimes \1_3)\\
&=(\sum_{k=1}^{2} \varphi_k^{(4)}(a))\otimes \varphi_1^{(4)}(a)+
\varphi_1^{(4)}(a)\otimes (\sum_{k=1}^{2} \varphi_k^{(4)}(a))\\
&=\varphi_2^{(4)}(a)\otimes \varphi_1^{(4)}(a)+
\varphi_1^{(4)}(a)\otimes \varphi_2^{(4)}(a)+ 2 \varphi_1^{(4)}(a)\otimes \varphi_1^{(4)}(a)\\
&\neq 0,
\end{aligned}
$$
yielding a contradiction. Therefore ${\bar A}=0$ and $\ip=\Com$.
%
%
%
\end{proof}


Unlike in the GK-dimension 2 case, the 2-unit of a 2-unitary
operad needs not be unique.

\begin{example}
\label{xxex5.4}
Let $\ip=F(A,M,f,g)$ where $M\neq 0$. Let $\1_2$ be the
canonical 2-unit of $\ip$ given in the construction of
$F(A,M,f,g)$. Let $\1'_2=\1_2+ \psi^{(2)}_{12}(m)$. It is easy to
check that $\1'_2$ is a $2a$-unit. Suppose that $m\ast (2,1)\neq
m$. Then $(\ip, \1_0, \1_1,\1'_2)$ is a 2-unitary
operad, but not $\Com$-augmented.
As a consequence, we can not replace ``$\Com$-augmented'' by
``2-unitary'' in Theorem \ref{xxthm0.2}.
\end{example}

\begin{remark}
\label{xxrem5.5}
For non-2-unitary operad, we have the following remarks.
\begin{enumerate}
\item[(1)]
By \cite[Construction 7.1]{QXZZ} there are a lot of symmetric
operads of GK-dimension 3 that are not 2-unitary.
\item[(2)]
In \cite{QXZZ}, an analogue of Bergman's gap theorem of
nonsymmetric operads is proved, namely, no finitely generated
locally finite nonsymmetric operad has GK-dimension strictly
between 1 and 2. In \cite{LQXZZZ} the authors proved that there
is no finitely generated symmetric operad with GK-dimension
strictly between 1 and 2.
\item[(3)]
It is an open question if there are finitely generated symmetric
operads with GK-dimension strictly between 2 and 3, see
\cite[Question 0.8]{QXZZ}.
\item[(4)]
For every $r\in \{0\}\cup \{1\} \cup [2, \infty)$ or $r=\infty$,
the authors in \cite{QXZZ} constructed an explicit non-symmetric
operad of GK-dimension $r$.
\end{enumerate}
\end{remark}

The following lemma was proved in \cite[Theorem 6.5]{BYZ}.

\begin{lemma}
\label{xxlem5.6}
Let $\ip$ be a 2-unitary operad of finite GK-dimension $\geq 3$.
Then $\ip$ is not semiprime.
\end{lemma}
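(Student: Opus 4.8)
The plan is to produce a nonzero nilpotent ideal inside $\ip$, which is precisely what semiprimeness forbids. Write $g=\gkdim\ip$, finite with $g\ge 3$. By Lemma~\ref{xxlem1.8}(2) we have $\gkdim\ip=\min\{k\mid \up{k}=0\}$, so $\up{g}=0$ while $\up{g-1}\neq 0$; since the truncation ideals are nested ideals of $\ip$, the candidate is $\up{g-1}$, a nonzero ideal. The aim is to show that $\up{g-1}$ is square-zero in the operadic sense, whence $\ip$ cannot be semiprime.

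The engine is the multiplicativity of the truncation filtration,
\[
\up{j}\ucr{i}\up{k}\subseteq \up{j+k-1}\qquad (j,k\ge 1),
\]
which I would establish by tracking how a restriction operator $\pi^J$ interacts with a partial composition. Given $\theta\in\up{j}(m)$, $\eta\in\up{k}(n)$ and $J\subseteq[m+n-1]$ with $|J|\le j+k-2$, the operad axioms rewrite $\pi^J(\theta\ucr{i}\eta)$ as a single partial composition $\pi^{J'}(\theta)\ucr{i'}\pi^{J''}(\eta)$, where $J''$ records the kept inputs of $\eta$ and $J'$ records those of $\theta$ together with the slot $i$ whenever $J''\neq\varnothing$. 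If the whole $\eta$-block is deleted, then $\pi^{\varnothing}(\eta)$ is a $0$-ary operation that vanishes because $\eta\in\up{k}$ with $k\ge 1$; otherwise a counting argument from $|J'|+|J''|\le j+k-1$ forces $|J'|\le j-1$ or $|J''|\le k-1$, so one of the two factors is zero. In all cases $\pi^J(\theta\ucr{i}\eta)=0$, which is exactly $\theta\ucr{i}\eta\in\up{j+k-1}$. This is the general form of the vanishing already recorded in Lemma~\ref{xxlem2.1}(3).

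Granting multiplicativity, set $j=k=g-1$ to get
\[
\up{g-1}\ucr{i}\up{g-1}\subseteq \up{2(g-1)-1}=\up{2g-3}\subseteq\up{g}=0,
\]
the last inclusion holding because $2g-3\ge g$ exactly when $g\ge 3$. To upgrade this from "partial compositions vanish" to "$\up{g-1}$ is a nilpotent ideal", I would show by induction on the composition tree that any iterated composite containing at least two factors from $\up{g-1}$ already lies in $\up{2g-3}=0$: at the root partial composition, if both subtrees carry a factor from $\up{g-1}$ their truncation levels add via multiplicativity, while if only one side does, the ideal property of $\up{g-1}$ keeps the level from dropping. Thus $\up{g-1}$ is a nonzero square-zero ideal, and $\ip$ is not semiprime.

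The main obstacle is the multiplicativity statement itself—specifically the bookkeeping identifying $\pi^J(\theta\ucr{i}\eta)$ with one partial composition of restricted operations, and the care required at the boundary case where an entire block of inputs is deleted so that a $0$-ary operation is substituted and a slot disappears. Once that is in place, the remaining steps (the arithmetic $2g-3\ge g$ and the tree induction) are routine, and the result for $\gkdim\ip=3$ drops out as the case $g=3$.
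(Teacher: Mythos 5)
Your proposal is correct, but it takes a genuinely different route from the paper, whose entire proof is a two-line reduction: if $\ip$ were semiprime then, by the proof of \cite[Theorem 6.5]{BYZ}, one would get $\up{2}=0$, and Lemma \ref{xxlem1.8}(2) would force $\gkdim \ip\leq 2$, a contradiction. You instead build the obstruction by hand: with $g=\gkdim\ip$, Lemma \ref{xxlem1.8}(2) gives $\up{g}=0$ and $\up{g-1}\neq 0$, and your multiplicativity estimate $\up{j}\ucr{i}\up{k}\subseteq \up{j+k-1}$ yields $\up{g-1}\ucr{i}\up{g-1}\subseteq \up{2g-3}\subseteq \up{g}=0$ precisely when $g\geq 3$. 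Your sketch of the multiplicativity is sound: if the whole $\eta$-block is deleted then $\pi^{\varnothing}(\eta)=0$ because $\eta\in\up{k}$ with $k\geq 1$, and otherwise $|J'|+|J''|=|J|+1\leq j+k-1$ forces $|J'|\leq j-1$ or $|J''|\leq k-1$, killing one restricted factor; this vanishing-under-restriction device is the same mechanism that powers the cited result of \cite{BYZ}, and, as you observe, it generalizes \eqref{E2.1.3}, whose proof in this paper is essentially your argument in the special case $j=k=2$, $g=3$. What your route buys: a self-contained proof independent of \cite{BYZ}, a slightly stronger conclusion (the top nonvanishing truncation ideal $\up{g-1}$ is square-zero, not merely nilpotent, as $\up{2}$ would be after roughly $g-1$ compositions), and a transparent explanation of the threshold $g\geq 3$ through the inequality $2g-3\geq g$. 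What the paper's route buys is brevity, since the nilpotency of $\up{2}$ in finite GK-dimension is already done in \cite{BYZ}. One small economy: your closing tree induction is dispensable, because by \eqref{E1.1.5} every full composition factors into iterated partial compositions, so once every partial composite of two elements of the ideal $\up{g-1}$ vanishes and one invokes the ideal property for composites involving only one such element, the square of that ideal is zero.
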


\begin{proof}
If $\ip$ is semiprime, by the proof of \cite[Theorem 6.5]{BYZ},
$\up{(2)}=0$. So $\gkdim \ip\leq 2$, yielding a contradiction.
\end{proof}

The following example shows that $\hot$ does not
preserve primeness.

\begin{example}
\label{xxex5.7}
Let $A=\Bbbk\oplus M_2(\Bbbk)$. Then $\ip:=F(A,0,0,0)$ is prime
of GK-dimension 2 by Theorem \ref{xxthm0.1}. Since
$\ip\hot \ip$ has GK-dimension 3, it is not semiprime
by Lemma \ref{xxlem5.6}.
\end{example}

\section{Appendix: Proof of Theorem \ref{xxthm4.1}}
\label{xxsec6}
This final section is devoted to a complete proof of 
Theorem \ref{xxthm4.1}.

\begin{proof}[Proof of Theorem \ref{xxthm4.1}]
We need to check (OP1), (OP2), (OP3).
Let ${\mathbb A}=\{\1_{n}\}$,
$\Phi=\{\varphi_{i}^{(n)}(a) \mid a\in {\bar A}\}$, and
$\Psi=\{\psi_{ij}^{(n)}(y) \mid y\in M\}$. We will use these
elements.

\noindent
{\it Verification of (OP1):}
By (C31), (C32) and (C33), $\1 \underset{1}{\circ} \theta=\theta$
for all $\theta\in \ip$. By (C31), (C34) and (C37), we
have $\theta\underset{i}{\circ} \1=\theta$ for all
$\theta\in \ip$ and $1\leq i\leq \Ar(\theta)$. Therefore
(OP1) holds.

\noindent
{\it Verification of (OP2):} There are two equations in (OP2).
We only check the first one in (OP2), namely, the following
equation
\begin{equation}
\label{E1.1.1a}\tag{E1.1.1}
(\la  \underset{i}{\circ} \mu) \underset{i-1+j}{\circ} \nu
=\la \underset{i}{\circ} (\mu \underset{j}{\circ} \nu),
1\le i\le l, 1\le j\le m
\end{equation}
for all $\la\in \ip(l), \mu\in \ip(m)$ and
$\nu\in \ip(n)$.

If two of
$\lambda, \mu,$ and $\nu$ are $\psi^{(n)}_{ij}$, it follows
from (C39) that both sides of \eqref{E1.1.1} are zero. Then
there are following 20 cases to consider.

{\sf Case 1: $\lambda \in \Psi$ and $\mu,\nu\in \Phi$. }
Write $\lambda =\psi^{(l)}_{k_1k_2}(y)$, $\mu=\varphi^{(m)}_{s}(a)$,
and $\nu=\varphi^{(n)}_{t}(b)$. Then
$$\begin{aligned}
{\text{LHS of \eqref{E1.1.1}}}
&=(\psi^{(l)}_{k_1k_2}(y)\underset{i}{\circ} \varphi^{(m)}_{s}(a))
\underset{i-1+j}{\circ} \varphi^{(n)}_{t}(b)\\
&\underset{\text{by (C38)}}{=}{\tiny
\begin{cases}
0 & i\neq k_1,k_2,\\
\psi_{k_1+s-1,k_2+m-1}^{(l+m-1)}(y\underset{1}{\cdot}a) & i=k_1,\\
\psi_{k_1,k_2+s-1}^{(l+m-1)}(y\underset{2}{\cdot}a) & i=k_2,
\end{cases}} \qquad \underset{i-1+j}{\circ} \varphi^{(n)}_{t}(b) \\
&\underset{\text{by (C38)}}{=}{\tiny
\begin{cases}
0 & i\neq k_1, k_2, i+j-1\neq k_1+s-1, k_2+m-1, k_1, k_2+s-1,\\
\psi_{k_1+s+t-2,k_2+m+n-2}^{(l+m+n-2)}((y\underset{1}{\cdot}a)\underset{1}{\cdot}b)
& i=k_1, i+j-1=k_1+s-1,\\
\psi_{k_1+s-1,k_2+m+t-2}^{(l+m+n-2)}((y\underset{1}{\cdot}a)\underset{2}{\cdot}b)
& i=k_1, i+j-1=k_2+m-1, \; (impossible)\\
\psi_{k_1+t-1,k_2+s+n-2}^{(l+m+n-2)}((y\underset{2}{\cdot}a)\underset{1}{\cdot}b)
& i=k_2, i+j-1=k_1,\; (impossible)\\
\psi_{k_1,k_2+s+t-2}^{(l+m+n-2)}((y\underset{2}{\cdot}a)\underset{2}{\cdot}b)
& i=k_2, i+j-1=k_2+s-1,
\end{cases}}\\
&=
{\tiny
\begin{cases}
\psi_{k_1+s+t-2,k_2+m+n-2}^{(l+m+n-2)}((y\underset{1}{\cdot}a)\underset{1}{\cdot}b)
& i=k_1, j=s,\\
\psi_{k_1,k_2+s+t-2}^{(l+m+n-2)}((y\underset{2}{\cdot}a)\underset{2}{\cdot}b)
& i=k_2, j=s,\\
0& {\text{otherwise}},
\end{cases}}
\end{aligned}
$$
and
$$\begin{aligned}
{\text{RHS of \eqref{E1.1.1}}}
&=\psi^{(l)}_{k_1k_2}(y)\underset{i}{\circ} (\varphi^{(m)}_{s}(a)
\underset{j}{\circ} \varphi^{(n)}_{t}(b))\\
&\underset{\text{by (C35)}}{=}
\psi^{(l)}_{k_1k_2}(y)\underset{i}{\circ}
{\tiny \begin{cases}
\psi^{(m+n-1)}_{s, j+t-1}(g(a, b)), & s<j, \\
\begin{array}{ll}
\sum\limits_{k=s}^{s+t-2} \psi^{(m+n-1)}_{k, s+t-1}(g(a, b)
+f(a)\underset{2}\cdot b)+\varphi^{(m+n-1)}_{s+t-1}(ab)\\
+\sum\limits_{k=s+t}^{s+n-1} \psi^{(m+n-1)}_{s+t-1, k}(g(b, a)
+f(a)\underset{1}\cdot b),
\end{array}& s=j, \\
\psi^{(m+n-1)}_{j+t-1, s+n-1}(g(b, a)), & s>j.
\end{cases}}\\
&\underset{\text{by (C39)}}{=}{\tiny
\begin{cases}
0 & s<j,\\
\psi^{(l)}_{k_1k_2}(y)\underset{i}{\circ}
\varphi^{(m+n-1)}_{s+t-1}(ab) & s=j,\\
0 & s>j,
\end{cases}}\\
&\underset{\text{by (C36)}}{=}
{\tiny
\begin{cases}
0 & s<j,\\
0& s=j, i\neq k_1, k_2,\\
\psi^{(l+m+n-2)}_{k_1+s+t-2,k_2+m+n-2}(y\underset{1}{\cdot}ab)
& s=j, i=k_1\\
\psi^{(l+m+n-2)}_{k_1,k_2+s+t-2}(y\underset{2}{\cdot}ab)
& s=j, i=k_2\\
0 & s>j,
\end{cases}}\\
&=
{\tiny
\begin{cases}
\psi_{k_1+s+t-2,k_2+m+n-2}^{(l+m+n-2)}((y\underset{1}{\cdot}a)\underset{1}{\cdot}b)
& i=k_1, j=s,\\
\psi_{k_1,k_2+s+t-2}^{(l+m+n-2)}((y\underset{2}{\cdot}a)\underset{2}{\cdot}b)
& i=k_2, j=s,\\
0& {\text{otherwise}},
\end{cases}}
\end{aligned}
$$
which implies that \eqref{E1.1.1} holds.

{\sf Case 2: $\mu \in \Psi$ and $\lambda,\nu\in \Phi$. }
Write $\lambda=\varphi^{(l)}_{u}(a)$, $\mu=\psi_{k_1k_2}^{(m)}(y)$,
and $\nu=\varphi^{(n)}_{v}(c)$ where $a,c\in {\bar A}$ and
$b\in M$. Then
$$\begin{aligned}
{\text{LHS of \eqref{E1.1.1}}}
&=(\varphi^{(l)}_{u}(a)\underset{i}{\circ}\psi_{k_1k_2}^{(m)}(y))
\underset{i-1+j}{\circ} \varphi^{(n)}_{v}(c)\\
&\underset{\text{by (C36)}}{=}
{\tiny
\begin{cases}
0& i\neq u,\\
\psi_{k_1+u-1,k_2+u-1}^{(l+m-1)}(ay)& i=u
\end{cases}
} \quad \underset{i-1+j}{\circ} \varphi^{(n)}_{v}(c)\\
&\underset{\text{by (C38)}}{=}
{\tiny
\begin{cases}
0& i\neq u,\\
0&j\neq k_1,k_2,\\
\psi_{k_1+u+v-2,k_2+u+n-2}^{(l+m+n-2)}(ay \underset{1}{\cdot} c) & i=u, j=k_1,\\
\psi_{k_1+u-1,k_2+u+v-2}^{(l+m+n-2)}(ay \underset{2}{\cdot} c) 	& i=u, j=k_2,
\end{cases}
}
\end{aligned}
$$
and
$$\begin{aligned}
{\text{RHS of \eqref{E1.1.1}}}
&=\varphi^{(l)}_{u}(a)\underset{i}{\circ} (\psi_{k_1k_2}^{(m)}(y)
\underset{j}{\circ} \varphi^{(n)}_{v}(c))\\
&\underset{\text{by (C38)}}{=}
\varphi^{(l)}_{u}(a)\underset{i}{\circ}
{\tiny
\begin{cases}
0& j\neq k_1,k_2,\\
\psi_{k_1+v-1,k_2+n-1}^{(m+n-1)}(y\underset{1}{\cdot} c) & j=k_1,\\
\psi_{k_1,k_2+v-1}^{(m+n-1)}(y\underset{2}{\cdot} c) & j=k_2,
\end{cases}
}\\
&\underset{\text{by (C38)}}{=}
{\tiny
\begin{cases}
0& j\neq k_1,k_2,\\
0& i\neq u,\\
\psi_{k_1+u+v-2,k_2+u+n-2}^{(l+m+n-2)}(a(y\underset{1}{\cdot} c))
&j=k_1, i=u,\\
\psi_{k_1+u-1,k_2+u+v-2}^{(l+m+n-2)}(a(y\underset{2}{\cdot} c))
&j=k_2, i=u.
\end{cases}
}
\end{aligned}
$$
Hence \eqref{E1.1.1} holds.

{\sf Case 3: $\nu \in \Psi$ and $\lambda,\mu\in \Phi$.}
Write $\lambda=\varphi^{(l)}_{u}(a)$, $\mu=\varphi^{(m)}_{v}(c)$,
and $\nu=\psi_{k_1k_2}^{(n)}(y)$. Then
$$\begin{aligned}
{\text{LHS of \eqref{E1.1.1}}}
&=(\varphi^{(l)}_{u}(a)\underset{i}{\circ}\varphi^{(m)}_{v}(c))
\underset{i-1+j}{\circ} \psi_{k_1k_2}^{(n)}(y)\\
&\underset{\text{by (C35)}}{=}
{\tiny
\begin{cases}
\psi  &i\neq u,\\
\psi +\varphi_{u+v-1}^{(l+m-1)}(ac) & i=u,
\end{cases}
}
\quad \underset{i-1+j}{\circ} \psi_{k_1k_2}^{(n)}(y)\\
&\underset{\text{by (C39)}}{=}
{\tiny
\begin{cases}
0 &i\neq u,\\
\varphi_{u+v-1}^{(l+m-1)}(ac) \underset{i-1+j}{\circ} \psi_{k_1k_2}^{(n)}(y) & i=u,
\end{cases}
}\\
&\underset{\text{by (C36)}}{=}
{\tiny
\begin{cases}
0 &i\neq u,\\
0 &j\neq v,\\
\psi_{k_1+u+v-2,k_2+u+v-2}^{(l+m+n-2)}((ac)y) & i=u,j=v,
\end{cases}
}
\end{aligned}
$$
where $\psi$ is a linear combination of elements in $\Psi$. And
$$\begin{aligned}
{\text{RHS of \eqref{E1.1.1}}}
&=\varphi^{(l)}_{u}(a)\underset{i}{\circ}(\varphi^{(m)}_{v}(c)
\underset{j}{\circ} \psi_{k_1k_2}^{(n)}(y))\\
&\underset{\text{by (C36)}}{=}
\varphi^{(l)}_{u}(a)\underset{i}{\circ}
{\tiny
\begin{cases}
0& j\neq v,\\
\psi_{k_1+v-1,k_2+v-1}^{(m+n-1)}(cy)& j=v,
\end{cases}
}\\
&\underset{\text{by (C36)}}{=}
{\tiny
\begin{cases}
0& j\neq v,\\
0& i\neq u\\
\psi_{k_1+v+u-2,k_2+v+u-2}^{(l+m+n-2)}(a(cy))& j=v, i=u.
\end{cases}
}
\end{aligned}
$$
Hence \eqref{E1.1.1} holds.

{\sf Case 4: $\lambda \in \Psi$, $\mu\in {\mathbb A}$, and $\nu\in \Phi$. }
Write $\lambda =\psi^{(l)}_{k_1k_2}(y)$, $\mu=\1_m$,
and $\nu=\varphi^{(n)}_{t}(c)$. Then
$$\begin{aligned}
{\text{LHS of \eqref{E1.1.1}}}
&=(\psi^{(l)}_{k_1k_2}(y)\underset{i}{\circ}\1_m)
\underset{i-1+j}{\circ} \varphi^{(n)}_{t}(c)\\
&\underset{\text{by (C37)}}{=}
{\tiny
\begin{cases}
\psi^{(l+m-1)}_{k_1+m-1, k_2+m-1}(y), & 1\le i<k_1, \\
\sum\limits_{k=k_1}^{k_1+m-1} \psi^{(l+m-1)}_{k, k_2+m-1}(y), & i=k_1, \\
\psi^{(l+m-1)}_{k_1, k_2+m-1}(y), & k_1<i<k_2,\\
\sum\limits_{k=k_2}^{k_2+m-1} \psi^{(l+m-1)}_{k_1, k}(y), & i=k_2,\\
\psi^{(l+m-1)}_{k_1k_2}(y), & k_2<i\le l.
\end{cases}
} \quad \underset{i-1+j}{\circ} \varphi^{(n)}_{t}(c)\\
&\underset{\text{by (C38)}}{=}
{\tiny
\begin{cases}
{\tiny
\begin{cases}
0, & i-1+j\neq k_1+m-1,k_2+m-1\\
\psi_{k_1+m+t-2,k_2+m+n-2}^{(l+m+n-2)}(y\underset{1}{\cdot} c), & i-1+j=k_1+m-1,(impossible)\\
\psi_{k_1+m-1,k_2+m+t-2}^{(l+m+n-2)}(y\underset{2}{\cdot} c), & i-1+j=k_1+m-1,(impossible).
\end{cases}
}& 1\le i<k_1,\\
\sum\limits_{k=k_1}^{k_1+m-1}\psi^{(l+m-1)}_{k,k_2+m-1}(y)\underset{i-1+j}{\circ} \varphi^{(n)}_{t}(c)=\psi_{k_1+j+t-2,k_2+m+n-2}^{(l+m+n-2)}(y\underset{1}{\cdot} c), &  i=k_1,\\
{\tiny
\begin{cases}
0, & i-1+j\neq k_1,k_2+m-1\\
\psi_{k_1+t-1,k_2+m+n-2}^{(l+m+n-2)}(y\underset{1}{\cdot} c), & i-1+j=k_1,(impossible)\\
\psi_{k_1,k_2+m+t-2}^{(l+m+n-2)}(y\underset{2}{\cdot} c), & i-1+j=k_2+m-1,(impossible).
\end{cases}
}& k_1<i<k_2,\\
\sum\limits_{k=k_2}^{k_2+m-1}\psi^{(l+m-1)}_{k_1,k}(y)\underset{i-1+j}{\circ} \varphi^{(n)}_{t}(c)=\psi_{k_1,k_2+j+t-2}^{(l+m+n-2)}(y\underset{2}{\cdot} c), & i=k_2,\\
{\tiny
\begin{cases}
0, & i-1+j\neq k_1,k_2\\
\psi_{k_1+t-1,k_2+n-1}^{(l+m+n-2)}(y\underset{1}{\cdot} c), & i-1+j=k_1,(impossible)\\
\psi_{k_1,k_2+t-1}^{(l+m+n-2)}(y\underset{2}{\cdot} c), & i-1+j=k_2,(impossible).
\end{cases}
}& k_2<i\le l.
\end{cases}}
\end{aligned}
$$
and
$$\begin{aligned}
{\text{RHS of \eqref{E1.1.1}}}
&=\psi^{(l)}_{k_1k_2}(y)\underset{i}{\circ}(\1_m
\underset{j}{\circ} \varphi^{(n)}_{t}(c))\\
&\underset{\text{by (C32)}}{=}\psi^{(l)}_{k_1k_2}(y)\underset{i}{\circ}\varphi^{(m+n-1)}_{j+t-1}(c)\\
&\underset{\text{by (C38)}}{=}
{\tiny
\begin{cases}
0& i\neq k_1,k_2,\\
\psi_{k_1+j+t-2,k_2+m+n-2}^{(l+m+n-2)}(y\underset{1}{\cdot} c)
&i=k_1,\\
\psi_{k_1,k_2+j+t-2}^{(l+m+n-2)}(y\underset{2}{\cdot} c)
&i=k_2.
\end{cases}
}
\end{aligned}
$$
Hence \eqref{E1.1.1} holds.

{\sf Case 5: $\lambda \in \Psi$, $\mu\in \Phi$, and $\nu\in {\mathbb A}$. }
Write $\lambda =\psi^{(l)}_{k_1k_2}(y)$, $\mu=\varphi^{(m)}_{s}(b)$,
and $\nu=\1_n$. Then
$$\begin{aligned}
{\text{LHS of \eqref{E1.1.1}}}
&=(\psi^{(l)}_{k_1k_2}(y)\underset{i}{\circ} \varphi^{(m)}_{s}(b))
\underset{i-1+j}{\circ} \1_n\\
&\underset{\text{by (C38)}}{=}{\tiny
\begin{cases}
0 & i\neq k_1,k_2,\\
\psi_{k_1+s-1,k_2+m-1}^{(l+m-1)}(y\underset{1}{\cdot}b) & i=k_1,\\
\psi_{k_1,k_2+s-1}^{(l+m-1)}(y\underset{2}{\cdot}b) & i=k_2,
\end{cases}} \qquad \underset{i-1+j}{\circ} \1_n \\
&\underset{\text{by (C37)}}{=}{\tiny
\begin{cases}
0 & i\neq k_1, k_2,\\
{\tiny
\begin{cases}
\psi^{(l+m+n-2)}_{k_1+s-1+n-1, k_2+m-1+n-1}(y\underset{1}{\cdot}b), & 1\le i-1+j<k_1+s-1, \\
\sum\limits_{k=k_1+s-1}^{k_1+s-1+n-1} \psi^{(l+m+n-2)}_{k, k_2+m-1+n-1}(y\underset{1}{\cdot}b), & i-1+j=k_1+s-1, \\
\psi^{(l+m+n-2)}_{k_1+s-1, k_2+m-1+n-1}(y\underset{1}{\cdot}b), & k_1+s-1<i-1+j<k_2+m-1,\\
\sum\limits_{k=k_2+m-1}^{k_2+m-1+n-1} \psi^{(l+m+n-2)}_{k_1+s-1, k}(y\underset{1}{\cdot}b), & i-1+j=k_2+m-1,(impossible)\\
\psi^{(l+m+n-2)}_{k_1+s-1,k_2+m-1}(y\underset{1}{\cdot}b), & k_2+m-1<i-1+j\le l+m-1,(impossible).
\end{cases}
} & i=k_1,\\
{\tiny
\begin{cases}
\psi^{(l+m+n-2)}_{k_1+n-1, k_2+s-1+n-1}(y\underset{2}{\cdot}b), & 1\le i-1+j<k_1,(impossible) \\
\sum\limits_{k=k_1}^{k_1+n-1} \psi^{(l+m+n-2)}_{k, k_2+s-1+n-1}(y\underset{2}{\cdot}b), & i-1+j=k_1, (impossible)\\
\psi^{(l+m+n-2)}_{k_1, k_2+s-1+n-1}(y\underset{2}{\cdot}b), & k_1<i-1+j<k_2+s-1,\\
\sum\limits_{k=k_2+s-1}^{k_2+s-1+n-1} \psi^{(l+m+n-2)}_{k_1, k}(y\underset{2}{\cdot}b), & i-1+j=k_2+s-1,\\
\psi^{(l+m+n-2)}_{k_1k_2+s-1}(y\underset{2}{\cdot}b), & k_2+s-1<i-1+j\le l+m-1.
\end{cases}
} & i=k_2,\\
\end{cases}}
\end{aligned}
$$
and
$$\begin{aligned}
{\text{RHS of \eqref{E1.1.1}}}
&=\psi^{(l)}_{k_1k_2}(y)\underset{i}{\circ} (\varphi^{(m)}_{s}(b)
\underset{j}{\circ} \1_n)\\
&\underset{\text{by (C34)}}{=}
\psi^{(l)}_{k_1k_2}(y)\underset{i}{\circ}
{\tiny \begin{cases}
\varphi^{(m+n-1)}_s(b), & s<j,\\
\sum\limits_{k=s}^{s+n-1}\varphi^{(m+n-1)}_{k}(b)
+\sum\limits_{s\le p_1<p_2\le s+n-1}\psi^{(m+n-1)}_{p_1p_2}(f(b)), & s=j, \\
\varphi^{(m+n-1)}_{s+n-1}(b), & s>j.
\end{cases}}\\
&\underset{\text{by (C38)}}{=}{\tiny
\begin{cases}
{\tiny
\begin{cases}
0 & i\neq k_1,k_2,\\
\psi_{k_1+s-1,k_2+m+n-2}^{(l+m+n-2)}(y\underset{1}{\cdot}b) & i=k_1,\\
\psi_{k_1,k_2+s-1}^{(l+m+n-2)}(y\underset{2}{\cdot}b) & i=k_2,
\end{cases}}& s<j\\
{\tiny
\begin{cases}
0 & i\neq k_1,k_2,\\
\sum\limits_{k=s}^{s+n-1}\psi_{k_1+k-1,k_2+m+n-2}^{(l+m+n-2)}(y\underset{1}{\cdot}b) & i=k_1,\\
\sum\limits_{k=s}^{s+n-1}\psi_{k_1,k_2+k-1}^{(l+m+n-2)}(y\underset{2}{\cdot}b) & i=k_2,
\end{cases}}& s=j\\
{\tiny
\begin{cases}
0 & i\neq k_1,k_2,\\
\psi_{k_1+s+n-2,k_2+m+n-2}^{(l+m+n-2)}(y\underset{1}{\cdot}b) & i=k_1,\\
\psi_{k_1,k_2+s+n-2}^{(l+m+n-2)}(y\underset{2}{\cdot}b) & i=k_2,
\end{cases}}& s>j
\end{cases}}
\end{aligned}
$$
which implies that \eqref{E1.1.1} holds.

{\sf Case 6: $\mu \in \Psi$, $\lambda\in {\mathbb A}$, and $\nu\in \Phi$. }
Write $\lambda =\1_l$, $\mu=\psi^{(m)}_{k_1k_2}(y)$,
and $\nu=\varphi^{(n)}_{t}(c)$. Then
$$\begin{aligned}
{\text{LHS of \eqref{E1.1.1}}}
&=(\1_l\underset{i}{\circ}\psi_{k_1k_2}^{(m)}(y))
\underset{i-1+j}{\circ} \varphi^{(n)}_{t}(c)\\
&\underset{\text{by (C33)}}{=}
\psi_{i+k_1-1,i+k_2-1}^{(l+m-1)}(y) \underset{i-1+j}{\circ} \varphi^{(n)}_{t}(c)\\
&\underset{\text{by (C38)}}{=}
{\tiny
\begin{cases}
0 & j\neq k_1,k_2,\\
\psi_{i+k_1+t-2,i+k_2+n-2}^{(l+m+n-2)}(y\underset{1}{\cdot}c) & j=k_1,\\
\psi_{i+k_1-1,i+k_2+t-2}^{(l+m+n-2)}(y\underset{2}{\cdot}c) & j=k_2,
\end{cases}}
\end{aligned}
$$
and
$$\begin{aligned}
{\text{RHS of \eqref{E1.1.1}}}
&=\1_l\underset{i}{\circ} (\psi_{k_1k_2}^{(m)}(y)
\underset{j}{\circ} \varphi^{(n)}_{t}(c))\\
&\underset{\text{by (C38)}}{=}
\1_l\underset{i}{\circ}
{\tiny
\begin{cases}
0& j\neq k_1,k_2,\\
\psi_{k_1+t-1,k_2+n-1}^{(m+n-1)}(y\underset{1}{\cdot} c) & j=k_1,\\
\psi_{k_1,k_2+t-1}^{(m+n-1)}(y\underset{2}{\cdot} c) & j=k_2,
\end{cases}
}\\
&\underset{\text{by (C33)}}{=}
{\tiny
\begin{cases}
0& j\neq k_1,k_2,\\
\psi_{i+k_1+t-2,i+k_2+n-2}^{(l+m+n-2)}(y\underset{1}{\cdot} c) & j=k_1,\\
\psi_{i+k_1-1,i+k_2+t-2}^{(l+m+n-2)}(y\underset{2}{\cdot} c) & j=k_2,
\end{cases}
}
\end{aligned}
$$
Hence \eqref{E1.1.1} holds.

{\sf Case 7: $\mu \in \Psi$, $\lambda\in \Phi$, and $\nu\in {\mathbb A}$. }
Write $\lambda =\varphi^{(l)}_{r}(a)$, $\mu=\psi^{(m)}_{k_1k_2}(y)$,
and $\nu=\1_n$. Then
$$\begin{aligned}
{\text{LHS of \eqref{E1.1.1}}}
&=(\varphi^{(l)}_{r}(a)\underset{i}{\circ}\psi_{k_1k_2}^{(m)}(y))
\underset{i-1+j}{\circ} \1_n\\
&\underset{\text{by (C36)}}{=}
{\tiny
\begin{cases}
0& i\neq r,\\
\psi_{k_1+r-1,k_2+r-1}^{(l+m-1)}(ay)& i=r
\end{cases}
} \quad \underset{i-1+j}{\circ} \1_n\\
&\underset{\text{by (C37)}}{=}
{\tiny
\begin{cases}
0& i\neq r,\\
\psi^{(l+m+n-2)}_{k_1+r-1+n-1, k_2+r-1+n-1}(ay), & 1\le r-1+j<r+k_1-1, \\
\sum\limits_{k=k_1+r-1}^{k_1+r-1+n-1} \psi^{(l+m+n-2)}_{k, k_2+r-1+n-1}(ay), & r-1+j=r+k_1-1, \\
\psi^{(l+m+n-2)}_{k_1+r-1, k_2+r-1+n-1}(ay), & r+k_1-1<r-1+j<r+k_2-1,\\
\sum\limits_{k=k_2+r-1}^{k_2+r-1+n-1} \psi^{(l+m+n-2)}_{k_1+r-1, k}(ay), & r-1+j=r+k_2-1,\\
\psi^{(l+m+n-2)}_{k_1+r-1,k_2+r-1}(ay), & r+k_2-1<r-1+j\le l+m-1.
\end{cases}
}
\end{aligned}
$$
and
$$\begin{aligned}
{\text{RHS of \eqref{E1.1.1}}}
&=\varphi^{(l)}_{r}(a)\underset{i}{\circ} (\psi_{k_1k_2}^{(m)}(y)
\underset{j}{\circ} \1_n)\\
&\underset{\text{by (C37)}}{=}
\varphi^{(l)}_{r}(a)\underset{i}{\circ}
{\tiny
\begin{cases}
\psi^{(m+n-1)}_{k_1+n-1, k_2+n-1}(y), & 1\le j<k_1, \\
\sum\limits_{k=k_1}^{k_1+n-1} \psi^{(m+n-1)}_{k, k_2+n-1}(y), & j=k_1, \\
\psi^{(m+n-1)}_{k_1, k_2+n-1}(y), & k_1<j<k_2,\\
\sum\limits_{k=k_2}^{k_2+n-1} \psi^{(m+n-1)}_{k_1, k}(y), & j=k_2,\\
\psi^{(m+n-1)}_{k_1k_2}(y), & k_2<j\le m.
\end{cases}
}\\
&\underset{\text{by (C36)}}{=}
{\tiny
\begin{cases}
{\tiny
\begin{cases}
0& i\neq r,\\
\psi_{r+k_1+n-2, r+k_2+n-2}^{(l+m+n-2)}(ay)& i=r
\end{cases}
}& 1\le j<k_1,\\
{\tiny
\begin{cases}
0& i\neq r,\\
\sum\limits_{k=k_1}^{k_1+n-1} \psi_{r+k-1, r+k_2+n-2}^{(l+m+n-2)}(ay)& i=r
\end{cases}
}& j=k_1,\\
{\tiny
\begin{cases}
0& i\neq r,\\
\psi_{r+k_1-1, r+k_2+n-2}^{(l+m+n-2)}(ay)& i=r
\end{cases}
}& k_1<j<k_2,\\
{\tiny
\begin{cases}
0& i\neq r,\\
\sum\limits_{k=k_2}^{k_2+n-1} \psi_{r+k_1-1, r+k-1}^{(l+m+n-2)}(ay)& i=r
\end{cases}
}& j=k_2,\\
{\tiny
\begin{cases}
0& i\neq r,\\
\psi_{r+k_1-1,r+k_2-1}^{(l+m+n-2)}(ay)& i=r
\end{cases}
}& k_2<j\le m.
\end{cases}
}
\end{aligned}
$$
Hence \eqref{E1.1.1} holds.

{\sf Case 8: $\nu \in \Psi$, $\lambda\in {\mathbb A}$, and $\mu\in \Phi$.}
Write $\lambda=\1_l$, $\mu=\varphi^{(m)}_k(a)$, and
$\nu=\psi^{(n)}_{k_1k_2}(y)$. Then
$$\begin{aligned}
{\text{LHS of \eqref{E1.1.1}}}
&=(\1_l\underset{i}{\circ} \varphi^{(m)}_k(a))
  \underset{i-1+j}{\circ} \psi^{(n)}_{k_1k_2}(y)\\
&\underset{\text{by (C32)}}{=}
\varphi^{(l+m-1)}_{i+k-1}(a)\underset{i-1+j}{\circ} \psi^{(n)}_{k_1k_2}(y)\\
&\underset{\text{by (C36)}}{=}
{\tiny \begin{cases}
0 & i+k-1\neq i-1+j,\\
\psi^{(l+m+n-2)}_{k_1+i+j-2,k_2+i+j-2}(ay)&i+k-1=i-1+j,
\end{cases}
}
\end{aligned}
$$
and
$$\begin{aligned}
{\text{RHS of \eqref{E1.1.1}}}
&=\1_l\underset{i}{\circ} (\varphi^{(m)}_k(a)
  \underset{j}{\circ} \psi^{(n)}_{k_1k_2}(y))\\
&\underset{\text{by (C36)}}{=}
\1_l\underset{i}{\circ} \quad
{\tiny \begin{cases}
0 & k\neq j,\\
\psi^{(m+n-1)}_{k_1+j-1,k_2+j-1}(ay)& k= j
\end{cases}}\\
&\underset{\text{by (C32)}}{=}
{\tiny \begin{cases}
0 & k\neq j,\\
\psi^{(l+m+n-2)}_{k_1+j+i-2,k_2+j+i-2}(ay)& k=j.
\end{cases}}
\end{aligned}
$$
Hence \eqref{E1.1.1} holds.

{\sf Case 9: $\nu \in \Psi$, $\lambda\in \Phi$, and $\mu\in {\mathbb A}$.}
Write $\lambda=\varphi^{(l)}_{r}(a)$, $\1_m$,
and $\nu=\psi_{k_1k_2}^{(n)}(y)$. Then
$$\begin{aligned}
{\text{LHS of \eqref{E1.1.1}}}
&=(\varphi^{(l)}_{r}(a)\underset{i}{\circ}\1_m)
\underset{i-1+j}{\circ} \psi_{k_1k_2}^{(n)}(y)\\
&\underset{\text{by (C34)}}{=}
{\tiny \begin{cases}
\varphi^{(l+m-1)}_r(a), & r<i,\\
\sum\limits_{k=r}^{r+m-1}\varphi^{(l+m-1)}_{k}(a)
+\sum\limits_{r\le p_1<p_2\le r+m-1}\psi^{(l+m-1)}_{p_1p_2}(f(a)), & r=i, \\
\varphi^{(l+m-1)}_{r+m-1}(a), & r>i.
\end{cases}}
\quad \underset{i-1+j}{\circ} \psi_{k_1k_2}^{(n)}(y)\\
&\underset{\text{by (C36)}}{=}
{\tiny
\begin{cases}
{\tiny \begin{cases}
0 & r\neq i-1+j,\\
\psi^{(l+m+n-2)}_{r+k_1-1,r+k_2-1}(ay)& r=i-1+j,(impossible).
\end{cases}}& r<i\\
\psi^{(l+m+n-2)}_{i-1+j+k_1-1,i-1+j+k_2-1}(ay)& r=i\\
{\tiny \begin{cases}
0 & r+m-1\neq i-1+j,\\
\psi^{(l+m+n-2)}_{r+m-1+k_1-1,r+m-1+k_1-1}(ay)& r+m-1=i-1+j,(impossible).
\end{cases}}& r>i
\end{cases}
}\\
\end{aligned}
$$
And
$$\begin{aligned}
{\text{RHS of \eqref{E1.1.1}}}
&=\varphi^{(l)}_{r}(a)\underset{i}{\circ}(\1_m
\underset{j}{\circ} \psi_{k_1k_2}^{(n)}(y))\\
&\underset{\text{by (C33)}}{=}
\varphi^{(l)}_{r}(a)\underset{i}{\circ}
\psi_{j+k_1-1,j+k_2-1}^{(m+n-1)}(y)\\
&\underset{\text{by (C36)}}{=}
{\tiny
\begin{cases}
0& r\neq i,\\
\psi_{r+j+k_1-2,r+j+k_2-2}^{(l+m+n-2)}(ay))& r=i.
\end{cases}
}
\end{aligned}
$$
Hence \eqref{E1.1.1} holds.

{\sf Case 10: $\lambda,\mu,\nu\in \Phi$.}
Write $\lambda=\varphi_{r}^{(l)}(a)$ for $a\in {\bar A}$, $\mu=\varphi_{s}^{(m)}(b)$
and $\nu=\varphi^{(n)}_t(c)$. Then
$$\begin{aligned}
{\text{LHS of \eqref{E1.1.1}}}
&=(\varphi_{r}^{(l)}(a)\underset{i}{\circ} \varphi_{s}^{(m)}(b)) \underset{i-1+j}{\circ} \varphi^{(n)}_t(c)\\
&\underset{\text{by (C35)}}{=}
{\tiny \begin{cases}
\psi^{(l+m-1)}_{r, i+s-1}(g(a, b)), & r<i, \\
\begin{array}{ll}
\sum\limits_{k=r}^{r+s-2} \psi^{(l+m-1)}_{k, r+s-1}(g(a, b)
+f(a)\underset{2}\cdot b)+\varphi^{(l+m-1)}_{r+s-1}(ab)\\
+\sum\limits_{k=r+s}^{r+m-1} \psi^{(l+m-1)}_{r+s-1, k}(g(b, a)
+f(a)\underset{1}\cdot b),
\end{array}& r=i, \\
\psi^{(l+m-1)}_{i+s-1, r+m-1}(g(b, a)), & r>i.
\end{cases}} \quad \underset{i-1+j}{\circ} \varphi^{(n)}_t(c)\\
&\underset{\text{by (C38)}}{=}
{\tiny
\begin{cases}
{\tiny
\begin{cases}
0, & j\neq s,\\
\psi^{(l+m+n-2)}_{r,i+s-1+t-1}(g(a, b)\underset{2}\cdot c), & j= s.
\end{cases}
} & r<i,\\
Xterm,& r=i,\\
{\tiny
\begin{cases}
0, & j\neq s,\\
\psi^{(l+m+n-2)}_{i+s-1+t-1,r+m-1+n-1}(g(b, a)\underset{1}\cdot c), & j= s.
\end{cases}
} & r>i.
\end{cases}}
\end{aligned}
$$
where
$$\begin{aligned}
Xterm&=
(\sum\limits_{k=i}^{i+s-2} \psi^{(l+m-1)}_{k, i+s-1}(g(a, b)
+f(a)\underset{2}\cdot b)+\varphi^{(l+m-1)}_{i+s-1}(ab)
+\sum\limits_{k=i+s}^{i+m-1} \psi^{(l+m-1)}_{i+s-1, k}(g(b, a)
+f(a)\underset{1}\cdot b))
\underset{i-1+j}{\circ} \varphi^{(n)}_t(c)\\
&\underset{\text{by (C35),(C38)}}{=}{\tiny
\begin{cases}
\psi^{(l+m-1+n-1)}_{i+s-1, i+j-1+t-1}(g(ab, c))+\psi^{(l+m-1+n-1)}_{i+s-1, i+j-1+t-1}(g(b, a)
+f(a)\underset{1}\cdot b)\underset{2}\cdot c,& s<j,\\
{\tiny
\begin{cases}
\sum\limits_{k=i}^{i+s-2} \psi^{(l+m-1+n-1)}_{k, i+s-1+t-1}(g(a, b)
+f(a)\underset{2}\cdot b)\underset{2}\cdot c\\
+\sum\limits_{k=i+s-1}^{i+s-1+t-2} \psi^{(l+m-1+n-1)}_{k, i+s-1+t-1}(g(ab, c)+f(ab)\underset{2}\cdot c)
+\varphi^{(l+m-1+n-1)}_{i+s-1+t-1}((ab)c)\\
+\sum\limits_{k=i+s-1+t}^{i+s-1+n-1} \psi^{(l+m-1+n-1)}_{i+s-1+t-1, k}(g(c, ab)+f(ab)\underset{1}\cdot c)\\
+\sum\limits_{k=i+s}^{i+m-1} \psi^{(l+m-1+n-1)}_{i+s-1+t-1, k+n-1}(g(b, a)
+f(a)\underset{1}\cdot b)\underset{1}\cdot c
\end{cases}
} & s=j,\\
\psi^{(l+m-1+n-1)}_{i+j-1+t-1, i+s-1+n-1}(g(a, b)+f(a)\underset{2}\cdot b)\underset{1}\cdot c+\psi^{(l+m-1+n-1)}_{i+j-1+t-1, i+s-1+n-1}(g(c, ab)), & s> j.
\end{cases}
}
\end{aligned}
$$
and
$$\begin{aligned}
{\text{RHS of \eqref{E1.1.1}}}
&=\varphi_{r}^{(l)}(a)\underset{i}{\circ} (\varphi_{s}^{(m)}(b) \underset{j}{\circ} \varphi^{(n)}_t(c))\\
&\underset{\text{by (C35)}}{=}\varphi_{r}^{(l)}(a)\underset{i}{\circ} {\tiny \begin{cases}
\psi^{(m+n-1)}_{s, j+t-1}(g(b, c)), & s<j, \\
\begin{array}{ll}
\sum\limits_{k=s}^{s+t-2} \psi^{(m+n-1)}_{k, s+t-1}(g(b, c)
+f(b)\underset{2}\cdot c)+\varphi^{(m+n-1)}_{s+t-1}(bc)\\
+\sum\limits_{k=s+t}^{s+n-1} \psi^{(m+n-1)}_{s+t-1, k}(g(c, b)
+f(b)\underset{1}\cdot c),
\end{array}& s=j, \\
\psi^{(m+n-1)}_{j+t-1, s+n-1}(g(c, b)), & s>j.
\end{cases}}\\
&\underset{\text{by (C36)}}{=}
{\tiny \begin{cases}
{\tiny
\begin{cases}
0, & r\neq i,\\
\psi^{(l+m+n-2)}_{r+s-1,r+j+t-1-1}(ag(b,c)), & r= i.
\end{cases}
} & s<j,\\
Yterm& s=j,\\
{\tiny
\begin{cases}
0, & r\neq i,\\
\psi^{(l+m+n-2)}_{r+j+t-1-1,r+s+n-1-1}(ag(c, b)), & r= i.
\end{cases}
} & s>j.
\end{cases}
}
\end{aligned}
$$
where
$$\begin{aligned}
Yterm&\underset{s=j}{=}
\varphi_{r}^{(l)}(a)\underset{i}{\circ}(\sum\limits_{k=s}^{s+t-2} \psi^{(m+n-1)}_{k, s+t-1}(g(b, c)
+f(b)\underset{2}\cdot c)+\varphi^{(m+n-1)}_{s+t-1}(bc)
+\sum\limits_{k=s+t}^{s+n-1} \psi^{(m+n-1)}_{s+t-1, k}(g(c, b)
+f(b)\underset{1}\cdot c))\\
&\underset{\text{by (C35),(C36)}}{=}
{\tiny
\begin{cases}
\psi^{(l+m-1+n-1)}_{r, i+s+t-1-1}g(a, bc) & r<i,\\
{\tiny
\begin{cases}
\sum\limits_{k=s}^{s+t-2} \psi^{(l+m-1+n-1)}_{i+k-1, i+s+t-1-1}(a(g(b, c)
+f(b)\underset{2}\cdot c))\\
+\sum\limits_{k=i}^{i+s+t-1-2} \psi^{(l+m-1+n-1)}_{k, i+s+t-1-1}(g(a, bc)
+f(a)\underset{2}\cdot bc)+\varphi^{(l+m-1+n-1)}_{i+s+t-1-1}(a(bc))\\
+\sum\limits_{k=i+s+t-1}^{i+m+n-1-1} \psi^{(l+m-1+n-1)}_{i+s+t-1-1, k}(g(bc, a)
+f(a)\underset{1}\cdot bc)\\
+\sum\limits_{k=s+t}^{s+n-1} \psi^{(l+m-1+n-1)}_{i+s+t-1-1, i+k-1}(a(g(c, b)
+f(b)\underset{1}\cdot c))
\end{cases}
}& r=i,\\
 \psi^{(l+m-1+n-1)}_{i+s+t-1-1, r+m-1+n-1}g(bc, a), & r> i.
\end{cases}
}
\end{aligned}
$$
Hence \eqref{E1.1.1} holds.

{\sf Case 11: $\lambda \in {\mathbb A}$ and $\mu,\nu\in \Phi$. }
Write $\lambda=\1_l$, $\mu=\varphi^{(m)}_s(b)$, and
$\nu=\varphi^{(n)}_t(c)$. Then
$$\begin{aligned}
{\text{LHS of \eqref{E1.1.1}}}
&=(\1_l\underset{i}{\circ} \varphi^{(m)}_s(b))
  \underset{i-1+j}{\circ} \varphi^{(n)}_t(c)\\
&\underset{\text{by (C32)}}{=}
\varphi^{(l+m-1)}_{i+s-1}(b)\underset{i-1+j}{\circ} \varphi^{(n)}_t(c)\\
&\underset{\text{by (C35)}}{=}
{\tiny \begin{cases}
\psi^{(l+m+n-2)}_{i+s-1, i-1+j+t-1}(g(b, c)), & i+s-1<i-1+j, \\
\begin{array}{ll}
\sum\limits_{k=i+s-1}^{i+s-1+t-2} \psi^{(l+m+n-2)}_{k, i+s-1+t-1}(g(b,c)
+f(b)\underset{2}\cdot c)+\varphi^{(l+m+n-2)}_{i+s-1+t-1}(bc)\\
+\sum\limits_{k=i+s-1+t}^{i+s-1+n-1} \psi^{(l+m+n-2)}_{i+s-1+t-1, k}(g(c, b)
+f(b)\underset{1}\cdot c),
\end{array}&  i+s-1=i-1+j, \\
\psi^{(l+m+n-2)}_{i-1+j+t-1, i+s-1+n-1}(g(c, b)), &  i+s-1>i-1+j.
\end{cases}
}
\end{aligned}
$$
and
$$\begin{aligned}
{\text{RHS of \eqref{E1.1.1}}}
&=\1_l\underset{i}{\circ} (\varphi^{(m)}_s(b)
  \underset{j}{\circ} \varphi^{(n)}_t(c))\\
&\underset{\text{by (C35)}}{=}
\1_l\underset{i}{\circ} \quad
{\tiny \begin{cases}
\psi^{(m+n-1)}_{s, j+t-1}(g(b, c)), & s<j, \\
\begin{array}{ll}
\sum\limits_{k=s}^{s+t-2} \psi^{(m+n-1)}_{k, s+t-1}(g(b, c)
+f(b)\underset{2}\cdot c)+\varphi^{(m+n-1)}_{s+t-1}(bc)\\
+\sum\limits_{k=s+t}^{s+n-1} \psi^{(m+n-1)}_{s+t-1, k}(g(c, b)
+f(b)\underset{1}\cdot c),
\end{array}& s=j, \\
\psi^{(m+n-1)}_{j+t-1, s+n-1}(g(c, b)), & s>j.
\end{cases}}\\
&\underset{\text{by (C32),(C33)}}{=}
{\tiny \begin{cases}
\psi^{(l+m+n-2)}_{i+s-1, i+j+t-2}(g(b, c)), & s<j, \\
\begin{array}{ll}
\sum\limits_{k=s}^{s+t-2} \psi^{(l+m+n-2)}_{i+k-1, i+s+t-2}(g(b, c)
+f(b)\underset{2}\cdot c)+\varphi^{(l+m+n-2)}_{i+s+t-2}(bc)\\
+\sum\limits_{k=s+t}^{s+n-1} \psi^{(l+m+n-2)}_{i+s+t-2, i+k-1}(g(c, b)
+f(b)\underset{1}\cdot c),
\end{array}& s=j, \\
\psi^{(l+m+n-2)}_{i+j+t-2, i+s+n-2}(g(c, b)), & s>j.
\end{cases}}
\end{aligned}
$$
Hence \eqref{E1.1.1} holds.

{\sf Case 12: $\mu \in {\mathbb A}$ and $\lambda,\nu\in \Phi$. }
Write $\lambda=\varphi_{r}^{(l)}(a)$, $\mu=\1_m$
and $\nu=\varphi_{t}^{(n)}(c)$. Then
$$\begin{aligned}
{\text{LHS of \eqref{E1.1.1}}}
&=(\varphi_{r}^{(l)}(a)\underset{i}{\circ} \1_m) \underset{i-1+j}{\circ} \varphi_{t}^{(n)}(c)\\
&\underset{\text{by (C34)}}{=}
{\tiny \begin{cases}
\varphi^{(l+m-1)}_r(a), & r<i,\\
\sum\limits_{w=r}^{r+m-1}\varphi^{(l+m-1)}_{w}(a)
+\sum\limits_{r\le k_1<k_2\le r+m-1}\psi^{(l+m-1)}_{k_1k_2}(f(a)), & r=i, \\
\varphi^{(l+m-1)}_{r+m-1}(a), & r>i.
\end{cases}} \quad \underset{i-1+j}{\circ} \varphi_{t}^{(n)}(c)\\
&\underset{\text{by (C35)}}{=}
{\tiny
\begin{cases}
{\tiny
\begin{cases}
\psi^{(l+m+n-2)}_{r, i-1+j+t-1}(g(a,c)), & r<i-1+j, \\
(impossible), & others.
\end{cases}
} & r<i.\\
Xterm& r=i,\\
{\tiny
\begin{cases}
\psi^{(l+m+n-2)}_{i-1+j+t-1, r+m-1+n-1}(g(c,a)), & r+m-1>i-1+j, \\
(impossible), & others.
\end{cases}
} & r>i.\\
\end{cases}}
\end{aligned}
$$
where
$$\begin{aligned}
Xterm&=
\sum\limits_{w=r}^{r+m-1}\varphi^{(l+m-1)}_{w}(a)\underset{i-1+j}{\circ} \varphi_{t}^{(n)}(c)
+\sum\limits_{r\le k_1<k_2\le r+m-1}\psi^{(l+m-1)}_{k_1k_2}(f(a))\underset{i-1+j}{\circ} \varphi_{t}^{(n)}(c)\\
&\underset{\text{by (r=i)}}{=}{\tiny
\begin{cases}
\sum\limits_{w=i}^{i-1+j-1}\varphi^{(l+m-1)}_{w}(a)\underset{i-1+j}{\circ} \varphi_{t}^{(n)}(c)
+\varphi^{(l+m-1)}_{i-1+j}(a)\underset{i-1+j}{\circ} \varphi_{t}^{(n)}(c)
+\sum\limits_{w=i+j}^{i+m-1}\varphi^{(l+m-1)}_{w}(a)\underset{i-1+j}{\circ}\varphi_{t}^{(n)}(c) \\
+\sum\limits_{k=i+j}^{i+m-1}\psi^{(l+m-1)}_{i-1+j,k}(f(a))\underset{i-1+j}{\circ} \varphi_{t}^{(n)}(c)+
\sum\limits_{k=i}^{i-1+j-1}\psi^{(l+m-1)}_{k,i-1+j}(f(a))\underset{i-1+j}{\circ} \varphi_{t}^{(n)}(c)
\end{cases}
}\\
&\underset{\text{by (C35),(C38)}}{=}
{\tiny \begin{cases}
\sum\limits_{w=i}^{i-1+j-1}\psi^{(l+m+n-2)}_{w, i-1+j+t-1}(g(a,c))
+\sum\limits_{k=i-1+j}^{i-1+j+t-2} \psi^{(l+m+n-2)}_{k, i-1+j+t-1}(g(a, c)+f(a)\underset{2}\cdot c)\\
+\varphi^{(l+m+n-2)}_{i-1+j+t-1}(ac)
+\sum\limits_{k=i-1+j+t}^{i-1+j+n-1} \psi^{(l+m+n-2)}_{i-1+j+t-1, k}(g(c, a)
+f(a)\underset{1}\cdot c)\\
+\sum\limits_{w=i+j}^{i+m-1}\psi^{(l+m+n-2)}_{i-1+j+t-1, w+n-1}(g(c,a)) \\
+\sum\limits_{k=i+j}^{i+m-1}\psi^{(l+m-1)}_{i-1+j+t-1,k+n-1}(f(a)\underset{1}\cdot c)+
\sum\limits_{k=i}^{i-1+j-1}\psi^{(l+m-1)}_{k,i-1+j+t-1}(f(a)\underset{2}\cdot c)
\end{cases}
}
\end{aligned}
$$
and
$$\begin{aligned}
{\text{RHS of \eqref{E1.1.1}}}
&=\varphi_{r}^{(l)}(a)\underset{i}{\circ} (\1_m \underset{j}{\circ} \varphi_{t}^{(n)}(c))\\
&\underset{\text{by (C32)}}{=}\varphi_{r}^{(l)}(a)\underset{i}{\circ} \varphi_{j+t-1}^{(m+n-1)}(c) \\
&\underset{\text{by (C35)}}{=}
{\tiny \begin{cases}
\psi^{(l+m+n-2)}_{r, i+j+t-1-1}(g(a,c)), & r<i, \\
\begin{array}{ll}
\sum\limits_{k=r}^{r+j+t-1-2} \psi^{(l+m+n-2)}_{k, r+j+t-1-1}(g(a, c)
+f(a)\underset{2}\cdot c)+\varphi^{(l+m+n-2)}_{r+j+t-1-1}(ac)\\
+\sum\limits_{k=r+j+t-1}^{r+m+n-1-1} \psi^{(l+m+n-2)}_{r+j+t-1-1, k}(g(c, a)
+f(a)\underset{1}\cdot c),
\end{array}& r=i, \\
\psi^{(l+m+n-2)}_{i+j+t-1-1, r+m+n-1-1}(g(c, a)), & r>i.
\end{cases}
}
\end{aligned}
$$
Hence \eqref{E1.1.1} holds.

{\sf Case 13: $\nu \in {\mathbb A}$ and $\lambda,\mu\in \Phi$. }
Write $\lambda=\varphi_{r}^{(l)}(a)$ for $a\in {\bar A}$, $\mu=\varphi_{s}^{(m)}(b)$
and $\nu=\1_n$. Then
$$\begin{aligned}
{\text{LHS of \eqref{E1.1.1}}}
&=(\varphi_{r}^{(l)}(a)\underset{i}{\circ} \varphi_{s}^{(m)}(b)) \underset{i-1+j}{\circ} \1_n\\
&\underset{\text{by (C35)}}{=}
{\tiny \begin{cases}
\psi^{(l+m-1)}_{r, i+s-1}(g(a, b)), & r<i, \\
\begin{array}{ll}
\sum\limits_{k=r}^{r+s-2} \psi^{(l+m-1)}_{k, r+s-1}(g(a, b)
+f(a)\underset{2}\cdot b)+\varphi^{(l+m-1)}_{r+s-1}(ab)\\
+\sum\limits_{k=r+s}^{r+m-1} \psi^{(l+m-1)}_{r+s-1, k}(g(b, a)
+f(a)\underset{1}\cdot b),
\end{array}& r=i, \\
\psi^{(l+m-1)}_{i+s-1, r+m-1}(g(b, a)), & r>i.
\end{cases}} \quad \underset{i-1+j}{\circ} \1_n\\
&\underset{\text{by (C37)}}{=}
{\tiny
\begin{cases}
{\tiny
\begin{cases}
\psi^{(l+m+n-2)}_{r, i+s-1+n-1}(g(a, b), & j<s,\\
\sum\limits_{k=i+s-1}^{i+s-1+n-1} \psi^{(l+m+n-2)}_{r, k}(g(a, b), & j=s,\\
\psi^{(l+m+n-2)}_{r,i+s-1}(g(a, b), & j> s.
\end{cases}
}, & r<i,\\
Xterm,& r=i,\\
{\tiny
\begin{cases}
\psi^{(l+m+n-2)}_{i+s-1+n-1, r+m-1+n-1}(g(b, a)), & j<s, \\
\sum\limits_{k=i+s-1}^{i+s-1+n-1} \psi^{(l+m+n-2)}_{k, r+m-1+n-1}(g(b, a)), & j=s, \\
\psi^{(l+m+n-2)}_{i+s-1, r+m-1+n-1}(g(b, a)), & j>s
\end{cases}
}, & r>i.
\end{cases}}
\end{aligned}
$$
where
$$\begin{aligned}
Xterm&=
(\sum\limits_{k=i}^{i+s-2} \psi^{(l+m-1)}_{k, i+s-1}(g(a, b)
+f(a)\underset{2}\cdot b)+\varphi^{(l+m-1)}_{i+s-1}(ab)
+\sum\limits_{k=i+s}^{i+m-1} \psi^{(l+m-1)}_{i+s-1, k}(g(b, a)
+f(a)\underset{1}\cdot b),)
\underset{k-1+j}{\circ} \1_n\\
&\underset{\text{by (C34),(C37)}}{=}{\tiny
\begin{cases}
{\tiny
\begin{cases}
\sum\limits_{k=i}^{i+s-2+n-1} \psi^{(l+m-1+n-1)}_{k, i+s-1+n-1}(g(a, b)+f(a)\underset{2}\cdot b),\\
+\varphi^{(l+m-1+n-1)}_{i+s-1+n-1}(ab),\\
+\sum\limits_{k=i+s+n-1}^{i+m-1+n-1} \psi^{(l+m-1+n-1)}_{i+s-1+n-1, k}(g(b, a)+f(a)\underset{1}\cdot b)),
\end{cases}
} & j<s,\\
{\tiny
\begin{cases}
\sum\limits_{i\leq k_1<i+s-2,i+s-1\leq k_2<i+s-1+n-1}\psi^{(l+m-1+n-1)}_{k_1, k_2}(g(a, b)+f(a)\underset{2}\cdot b)\\
+\sum\limits_{k=i+s-1}^{i+s-1+n-1}\varphi^{(l+m-1+n-1)}_{k}(ab)+\sum\limits_{i+s-1\leq k_1<k_2<i+s-1+n-1} \psi^{(l+m-1+n-1)}_{k_1, k_2}f(ab)\\
+\sum\limits_{i+s-1\leq k_1<i+s-1+n-1,i+s+n-1\leq k_2<i+m-1+n-1} \psi^{(l+m-1+n-1)}_{k_1, k_2}(g(b, a)+f(a)\underset{1}\cdot b))
\end{cases}
} & j=s,\\
{\tiny
\begin{cases}
\sum\limits_{k=i}^{i+s-2} \psi^{(l+m-1+n-1)}_{k, i+s-1}(g(a, b)+f(a)\underset{2}\cdot b),\\
+\varphi^{(l+m-1+n-1)}_{i+s-1}(ab),\\
+\sum\limits_{k=i+s}^{i+m-1+n-1} \psi^{(l+m-1+n-1)}_{i+s-1, k}(g(b, a)+f(a)\underset{1}\cdot b)),
\end{cases}
}, & j> s.
\end{cases}
}
\end{aligned}
$$
and
$$\begin{aligned}
{\text{RHS of \eqref{E1.1.1}}}
&=\varphi_{r}^{(l)}(a)\underset{i}{\circ} (\varphi_{s}^{(m)}(b) \underset{j}{\circ} \1_n)\\
&\underset{\text{by (C34)}}{=}\varphi_{r}^{(l)}(a)\underset{i}{\circ} {\tiny \begin{cases}
\varphi^{(m+n-1)}_s(b), & s<j,\\
\sum\limits_{k=s}^{s+n-1}\varphi^{(m+n-1)}_{k}(b)
+\sum\limits_{s\le p_1<p_2\le s+n-1}\psi^{(m+n-1)}_{p_1p_2}(f(b)), & s=j, \\
\varphi^{(m+n-1)}_{s+n-1}(b), & s>j.
\end{cases}} \\
&\underset{\text{by (C35)}}{=}
{\tiny \begin{cases}
{\tiny \begin{cases}
\psi^{(l+m+n-2)}_{r, i+s-1}(g(a, b)), & r<i, \\
\begin{array}{ll}
\sum\limits_{k=r}^{r+s-2} \psi^{(l+m+n-2)}_{k, r+s-1}(g(a, b)
+f(a)\underset{2}\cdot b)+\varphi^{(l+m+n-2)}_{r+s-1}(ab)\\
+\sum\limits_{k=r+s}^{r+m+n-1-1} \psi^{(l+m+n-2)}_{r+s-1, k}(g(b, a)
+f(a)\underset{1}\cdot b),
\end{array}& r=i, \\
\psi^{(l+m+n-2)}_{i+s-1, r+m+n-1-1}(g(b, a)), & r>i.
\end{cases}} & s<j,\\
Yterm& s=j,\\
{\tiny \begin{cases}
\psi^{(l+m+n-2)}_{r, i+s+n-1-1}(g(a, b)), & r<i, \\
\begin{array}{ll}
\sum\limits_{k=r}^{r+s+n-1-2} \psi^{(l+m+n-2)}_{k, r+s+n-1-1}(g(a, b)
+f(a)\underset{2}\cdot b)+\varphi^{(l+m+n-2)}_{r+s+n-1-1}(ab)\\
+\sum\limits_{k=r+s+n-1}^{r+m+n-1-1} \psi^{(l+m+n-2)}_{r+s+n-1-1, k}(g(b, a)
+f(a)\underset{1}\cdot b),
\end{array}& r=i, \\
\psi^{(l+m+n-2)}_{i+s+n-1-1, r+m+n-1-1}(g(b, a)), & r>i.
\end{cases}} & s>j.
\end{cases}
}
\end{aligned}
$$
where
$$\begin{aligned}
Yterm&\underset{s=j}{=}
\varphi_{r}^{(l)}(a)\underset{i}{\circ}(\sum\limits_{k=s}^{s+n-1}\varphi^{(m+n-1)}_{k}(b)
+\sum\limits_{s\le p_1<p_2\le s+n-1}\psi^{(m+n-1)}_{p_1p_2}(f(b)))\\
&\underset{\text{by (C34),(C37)}}{=}
{\tiny
\begin{cases}
\sum\limits_{k=i+s-1}^{i+s-1+n-1} \psi^{(l+m-1+n-1)}_{r, k}g(a, b) & r<i,\\
{\tiny
\begin{cases}\sum\limits_{k=s}^{s+n-1}\sum\limits_{w=r}^{r+k-2} \psi^{(l+m-1+n-1)}_{w, r+k-1}(g(a, b)
+f(a)\underset{2}\cdot b)+\sum\limits_{k=s}^{s+n-1}\varphi^{(l+m-1+n-1)}_{r+k-1}(ab)\\
+\sum\limits_{k=s}^{s+n-1}\sum\limits_{w=r+k}^{r+m+n-1-1} \psi^{(l+m-1+n-1)}_{r+k-1, w}(g(b, a)
+f(a)\underset{1}\cdot b)\end{cases}
} & r=i,\\
\sum\limits_{k=i+s-1}^{i+s-1+n-1} \psi^{(l+m-1+n-1)}_{k, r+m-1+n-1}g(b, a), & r> i.
\end{cases}
}
\end{aligned}
$$
Hence \eqref{E1.1.1} holds.

{\sf Case 14: $\lambda \in \Phi$ and $\mu,\nu\in {\mathbb A}$. }
Write $\lambda=\varphi_{k}^{(l)}(a)$, $\mu=\1_m$
and $\nu=\1_n$. Then
$$\begin{aligned}
{\text{LHS of \eqref{E1.1.1}}}
&=(\varphi_{k}^{(l)}(a)\underset{i}{\circ} \1_m) \underset{i-1+j}{\circ} \1_n\\
&\underset{\text{by (C34)}}{=}
{\tiny \begin{cases}
\varphi^{(l+m-1)}_k(a), & k<i,\\
\sum\limits_{w=k}^{k+m-1}\varphi^{(l+m-1)}_{w}(a)
+\sum\limits_{k\le k_1<k_2\le k+m-1}\psi^{(l+m-1)}_{k_1k_2}(f(a)), & k=i, \\
\varphi^{(l+m-1)}_{k+m-1}(a), & k>i.
\end{cases}} \quad \underset{i-1+j}{\circ} \1_n\\
&\underset{\text{by (C34)}}{=}
{\tiny
\begin{cases}
\varphi^{(l+m+n-2)}_k(a), & k<i,\\
{\tiny
\begin{cases}
\sum\limits_{w=k}^{k+j-2} \varphi^{(l+m+n-2)}_{w}(a)\\
+\sum\limits_{v=k+j-1}^{k+j+n-2}\varphi^{(l+m+n-2)}_v(a)\\
+\sum\limits_{k+j-1\leq l_1<l_2\leq k+j+n-2}
\psi^{(l+m+n-2)}_{l_1l_2}(f(a))\\
+\sum\limits_{w=k+j+n-1}^{k+m+n-2} \varphi^{(l+m+n-2)}_w(a)\\
+Xterm
\end{cases}
}& k=i,\\
\varphi^{(l+m+n-2)}_{k+m+n-2}(a), & k>i.
\end{cases}}
\end{aligned}
$$
where
$$\begin{aligned}
Xterm&\underset{\text{by (C37)}}{=}
(\sum\limits_{k\le k_1<k_2\le k+m-1}\psi^{(l+m-1)}_{k_1k_2}(f(a)))
\underset{k-1+j}{\circ} \1_n\\
&={\tiny
\begin{cases}
\sum\limits_{k\le k_1<k_2<k+j-1}\psi^{(l+m+n-2)}_{k_1k_2}(f(a)))\\
+\sum\limits_{k\leq k_1<k+j-1,k+j-1\leq k_2<k+j-1+n-1}
\psi^{(l+m+n-2)}_{k_1k_2}(f(a))\\
+\sum_{k_1<k+j-1<k_2}\psi_{k_1,k_2+n-1}(f(a))\\
+\sum_{k+j-1\leq k_1<k+j-1+n,k+j-1+n\leq k_2<i+m-1+n-1}
\psi^{(l+m+n-2)}_{k_1k_2}(f(a))\\
+\sum\limits_{k\le k+j-1<k_1<k_2}\psi^{(l+m+n-2)}_{k_1+n-1,k_2+n-1}(f(a)))
\end{cases}
}
\end{aligned}
$$
and
$$\begin{aligned}
{\text{RHS of \eqref{E1.1.1}}}
&=\varphi_{k}^{(l)}(a)\underset{i}{\circ} (\1_m \underset{j}{\circ} \1_n)\\
&=\varphi_{k}^{(l)}(a)\underset{i}{\circ} \1_{m+n-1} \\
&\underset{\text{by (C34)}}{=}
{\tiny \begin{cases}
\varphi^{(l+m+n-2)}_k(a), & k<i,\\
\sum\limits_{w=k}^{k+m+n-2}\varphi_{w}^{(l+m+n-2)}(a)
+\sum_{k\leq l_1<l_2\leq k+m+n-2} \psi_{l_1l_2}^{(l+m+n-2)}(f(a))
& k=i,\\
\varphi^{(l+m+n-2)}_{k+m+n-2}(a), & k>i.
\end{cases}
}
\end{aligned}
$$
Hence \eqref{E1.1.1} holds.

{\sf Case 15: $\mu \in \Phi$ and $\lambda,\nu\in {\mathbb A}$. }
Write $\mu=\varphi_{k}^{(m)}(a)$, $\lambda=\1_l$
and $\nu=\1_n$. Then
$$\begin{aligned}
{\text{LHS of \eqref{E1.1.1}}}
&=(\1_l \underset{i}{\circ} \varphi_{k}^{(m)}(a)) \underset{i-1+j}{\circ} \1_n
\underset{\text{by (C32)}}{=}
\varphi_{k+i-1}^{(m+l-1)}(a) \underset{i-1+j}{\circ} \1_n\\
&\underset{\text{by (C34)}}{=}
{\tiny \begin{cases}
\varphi_{k+i-1}^{(m+l+n-2)}(a)
& k<j \\
\sum\limits_{s=k+i-1}^{k+i+n-2} \varphi_{s}^{(m+l+n-2)}(a)
+\sum\limits_{k+i-1\leq k_1<k_2<k+i+n-2} \psi_{k_1k_2}^{(m+l+n-2)}(f(a))
& k=j \\
\varphi_{k+i+n-2}^{(m+l+n-2)}(a)
& k>j
\end{cases}}\\
{\text{RHS of \eqref{E1.1.1}}}
&=\1_l \underset{i}{\circ} (\varphi_{k}^{(m)}(a))\underset{j}{\circ} 1_n)\\
&\underset{\text{by (C34)}}{=}
\1_l \underset{i}{\circ}
{\tiny \begin{cases}
\varphi_{k}^{(m+n-1)}(a)
& k<j \\
\sum\limits_{s=k}^{k+n-1} \varphi_{s}^{(m+n-1)}(a)
+\sum\limits_{k\leq k_1<k_2<k+n-1} \psi_{k_1k_2}^{(m+n-1)}(f(a))
& k=j \\
\varphi_{k+n-1}^{(m+n-1)}(a)
& k>j
\end{cases}}
\\
&\underset{\text{by (C32), (C33)}}{=}
{\tiny \begin{cases}
\varphi_{k+i-1}^{(m+l+n-2)}(a)
& k<j \\
\sum\limits_{s=k+i-1}^{k+i+n-2} \varphi_{s}^{(m+l+n-2)}(a)
+\sum\limits_{k+i-1\leq k_1<k_2<k+i+n-2} \psi_{k_1k_2}^{(m+l+n-2)}(f(a))
& k=j \\
\varphi_{k+i+n-2}^{(m+l+n-2)}(a)
& k>j
\end{cases}}
\end{aligned}
$$
Hence \eqref{E1.1.1} holds.

{\sf Case 16: $\nu \in \Phi$ and $\lambda,\mu\in {\mathbb A}$.}
Write $\nu=\varphi_{k}^{(n)}(a)$, $\lambda=\1_l$
and $\mu=\1_m$. Then
$$\begin{aligned}
{\text{LHS of \eqref{E1.1.1}}}
&=(\1_l \underset{i}{\circ} \1_m) \underset{i-1+j}{\circ} \varphi_{k}^{(n)}(a)
\underset{\text{by (C31)}}{=}
\1_{l+m-1} \underset{i-1+j}{\circ} \varphi_{k}^{(n)}(a)\\
&\underset{\text{by (C32)}}{=}
\varphi_{k+i+j-2}^{(n+l+m-2)}(a)
\\
{\text{RHS of \eqref{E1.1.1}}}
&=\1_l \underset{i}{\circ} (\1_m \underset{j}{\circ} \varphi_{k}^{(n)}(a))
\underset{\text{by (C32)}}{=}
\1_l \underset{i}{\circ} \varphi_{k+j-1}^{(n+m-1)}(a)\\
&\underset{\text{by (C32)}}{=}
\varphi_{k+j+i-2}^{(n+m+l-2)}(a).
\end{aligned}
$$
Hence \eqref{E1.1.1} holds.

{\sf Case 17: $\lambda \in \Psi$ and $\mu,\nu\in {\mathbb A}$. }
Write $\lambda=\psi_{k_1,k_2}^{(l)}(y)$, $\mu=\1_m$
and $\nu=\1_n$. Then
$$\begin{aligned}
{\text{LHS of \eqref{E1.1.1}}}
&=(\psi_{k_1,k_2}^{(l)}(y)\underset{i}{\circ} \1_m) \underset{i-1+j}{\circ} \1_n\\
&\underset{\text{by (C37)}}{=}
{\tiny
\begin{cases}
\psi^{(l+m-1)}_{k_1+m-1, k_2+m-1}(y), & 1\le i<k_1, \\
\sum\limits_{k=k_1}^{k_1+m-1} \psi^{(l+m-1)}_{k, k_2+m-1}(y), & i=k_1, \\
\psi^{(l+m-1)}_{k_1, k_2+m-1}(y), & k_1<i<k_2,\\
\sum\limits_{k=k_2}^{k_2+m-1} \psi^{(l+m-1)}_{k_1, k}(y), & i=k_2,\\
\psi^{(l+m-1)}_{k_1k_2}(y), & k_2<i\le l.
\end{cases}
} \quad \underset{i-1+j}{\circ} \1_n\\
&\underset{\text{by (C37)}}{=}
{\tiny
\begin{cases}
{\tiny
\begin{cases}
\psi^{(l+m+n-2)}_{k_1+m-1+n-1, k_2+m-1+n-1}(y), & 1\le i-1+j<k_1+m-1, \\
(impossible), & others.
\end{cases}
}  & 1\le i<k_1.\\
\sum\limits_{k=k_1}^{k_1+j-2} \psi^{(l+m+n-2)}_{k, k_2+m+n-2}(y)+\sum\limits_{k=k_1-1+j}^{k_1-1+j+n-1} \psi^{(l+m+n-2)}_{k, k_2+m+n-2}(y)+\sum\limits_{k=k_1+j}^{k_1+m-1} \psi^{(l+m+n-2)}_{k+n-1, k_2+m+n-2}(y)& i=k_1.\\
{\tiny
\begin{cases}
\psi^{(l+m+n-2)}_{k_1, k_2+m-1+n-1}(y), & k_1\le i-1+j<k_2+m-1, \\
(impossible), & others.
\end{cases}
}  & k_1\le i<k_2.\\
\sum\limits_{k=k_2}^{k_2+j-2} \psi^{(l+m+n-2)}_{k_1, k}(y)+\sum\limits_{k=k_2-1+j}^{k_2-1+j+n-1} \psi^{(l+m+n-2)}_{k_1, k}(y)+\sum\limits_{k=k_2+j}^{k_2+m-1} \psi^{(l+m+n-2)}_{k_1, k+n-1}(y) & i=k_2.\\
{\tiny
\begin{cases}
\psi^{(l+m+n-2)}_{k_1, k_2}(y), & k_2\le i-1+j<l+m-1, \\
(impossible), & others.
\end{cases}
}  & k_2\le i<l.
\end{cases}}
\end{aligned}
$$
and
$$\begin{aligned}
{\text{RHS of \eqref{E1.1.1}}}
&=\psi_{k_1,k_2}^{(l)}(y)\underset{i}{\circ} (\1_m \underset{j}{\circ} \1_n)\\
&\underset{\text{by (C31)}}{=}\psi_{k_1,k_2}^{(l)}(y)\underset{i}{\circ} \1_{m+n-1} \\
&\underset{\text{by (C34)}}{=}
{\tiny
\begin{cases}
\psi^{(l+m+n-2)}_{k_1+m+n-2, k_2+m+n-2}(y), & 1\le i<k_1, \\
\sum\limits_{k=k_1}^{k_1+m+n-2} \psi^{(m+n-2)}_{k, k_2+m+n-2}(y), & i=k_1, \\
\psi^{(l+m+n-2)}_{k_1, k_2+m+n-2}(y), & k_1<i<k_2,\\
\sum\limits_{k=k_2}^{k_2+m+n-2} \psi^{(l+m+n-2)}_{k_1, k}(y), & i=k_2,\\
\psi^{(l+m+n-2)}_{k_1k_2}(y), & k_2<i\le l.
\end{cases}
}
\end{aligned}
$$
Hence \eqref{E1.1.1} holds.

{\sf Case 18: $\mu \in \Psi$ and $\lambda,\nu\in {\mathbb A}$. }
Write $\mu=\psi_{k_1,k_2}^{(m)}(y)$, $\lambda=\1_l$
and $\nu=\1_n$. Then
$$\begin{aligned}
{\text{LHS of \eqref{E1.1.1}}}
&=(\1_l \underset{i}{\circ} \psi_{k_1,k_2}^{(m)}(y)) \underset{i-1+j}{\circ} \1_n
\underset{\text{by (C32)}}{=}
\psi_{k_1+i-1,k_2+i-1}^{(l+m-1)}(y)\underset{i-1+j}{\circ} \1_n\\
&\underset{\text{by (C37)}}{=}
{\tiny
\begin{cases}
\psi_{k_1+i+n-2,k_2+i+n-2}^{(l+m+n-2)}(y) & j<k_1,\\
\sum\limits_{w=k_1+i-1}^{k_1+i+n-2} \psi_{w,k_2+i+n-1}^{(l+m+n-2)}(y) & j=k_1,\\
\psi_{k_1+i-1,k_2+i+n-2}^{(l+m+n-2)}(y)&k_1<j<k_2,\\
\sum\limits_{w=k_2+i-1}^{k_2+i+n-2} \psi_{k_1+i-1,w}^{(l+m+n-2)}(y) & j=k_2,\\
\psi_{k_1+i-1,k_2+i-1}^{(l+m+n-2)}(y),&j>k_2,
\end{cases}
}
\end{aligned}
$$
and
$$\begin{aligned}
{\text{RHS of \eqref{E1.1.1}}}
&=\1_l \underset{i}{\circ} (\psi_{k_1,k_2}^{(m)}(y)\underset{j}{\circ} \1_n)\\
&\underset{\text{by (C37)}}{=}
\1_l \underset{i}{\circ}
{\tiny
\begin{cases}
\psi^{(m+n-1)}_{k_1+n-1, k_2+n-1}(y), & 1\le j<k_1, \\
\sum\limits_{w=k_1}^{k_1+n-1} \psi^{(m+n-1)}_{w, k_2+n-1}(y), & j=k_1, \\
\psi^{(m+n-1)}_{k_1, k_2+n-1}(y), & k_1<j<k_2,\\
\sum\limits_{w=k_2}^{k_2+n-1} \psi^{(m+n-1)}_{k_1, w}(y), & j=k_2,\\
\psi^{(m+n-1)}_{k_1k_2}(y), & k_2<j\le m
\end{cases}
}\\
&\underset{\text{by (C33)}}{=}
{\tiny
\begin{cases}
\psi^{(l+m+n-1)}_{k_1+i+n-2, k_2+i+n-2}(y), & 1\le j<k_1, \\
\sum\limits_{w=k_1+i-1}^{k_1+i+n-2} \psi^{(m+n-1)}_{w, k_2+i+n-2}(y), & j=k_1, \\
\psi^{(l+m+n-1)}_{k_1+i-1, k_2+i+n-2}(y), & k_1<j<k_2,\\
\sum\limits_{w=k_2+i-1}^{k_2+i+n-2} \psi^{(m+n-1)}_{k_1+i-1, w}(y), & j=k_2,\\
\psi^{(l+m+n-1)}_{k_1+i-1,k_2+i-1}(y), & k_2<j\le m.
\end{cases}
}
\end{aligned}
$$
Hence \eqref{E1.1.1} holds.

{\sf Case 19: $\nu \in \Psi$ and $\lambda,\mu\in {\mathbb A}$.}
Write $\nu=\psi_{k_1,k_2}^{(n)}(y)$, $\lambda=\1_l$
and $\mu=\1_m$. Then
$$\begin{aligned}
{\text{LHS of \eqref{E1.1.1}}}
&=(\1_l \underset{i}{\circ} \1_m) \underset{i-1+j}{\circ} \psi_{k_1,k_2}^{(n)}(y)
\underset{\text{by (C31)}}{=}
\1_{l+m-1} \underset{i-1+j}{\circ} \psi_{k_1,k_2}^{(n)}(y)\\
&\underset{\text{by (C33)}}{=}
\psi_{k_1+i+j-2,k_2+i+j-2}^{(n+l+m-2)}(y)  \\
{\text{RHS of \eqref{E1.1.1}}}
&\underset{\text{by (C33)}}{=}
\1_l \underset{i}{\circ} (\1_m \underset{j}{\circ} \psi_{k_1,k_2}^{(n)}(y)
=\1_l \underset{i}{\circ} \psi_{k_1+j-1,k_2+j-1}^{(n+m-1)}(y)\\
&\underset{\text{by (C33)}}{=}
\psi_{k_1+i+j-2,k_2+i+j-2}^{(n+l+m-2)}(y)
\end{aligned}
$$
Hence \eqref{E1.1.1} holds.

{\sf Case 20: $\lambda,\mu,\nu\in {\mathbb A}$.} Equation
\eqref{E1.1.1} follows from (C31).

\noindent
{\it Verification of (OP3):} There are two equations in (OP3).
We only check the first one in (OP3), namely, the following
equation
\begin{equation}
\label{E1.1.3a}\tag{E1.1.3}
\mu  \ucr{i} (\nu \ast \sigma)=(\mu  \ucr{i} \nu)\ast \sigma',
\end{equation}
for all $\mu\in \ip(m), \nu\in \ip(n)$, and $\sigma\in \S_n$.
Note that $\sigma'$ is $1_m\ucr{i} \sigma$. Write
\begin{equation}
\label{E3.6.1}\tag{E3.6.1}
\sigma=\begin{pmatrix} k_1, & k_2, &\cdots & k_n
\end{pmatrix}
\end{equation}
where by convention $k_w=\sigma^{-1}(w)$ for all $w$.
Then, by definition,
\begin{equation}
\label{E3.6.2}\tag{E3.6.2}
{\small \sigma'=\begin{pmatrix} 1, &\cdots &i-1, & k_1+i-1,
& k_2+i-1, &\cdots & k_n+i-1, &
i+n,&\cdots &n+m-1\end{pmatrix}.}
\end{equation}
By \eqref{E3.6.2}, $(\sigma')^{-1}(s)=s$ if $s<i$ and $s\geq i+n$
and $(\sigma')^{-1}(k+i-1)=\sigma^{-1}(k)+i-1$ for $1\leq k \leq n$.
We refer to \cite[Section 8]{BYZ} for more details concerning $\sigma'$.

If both $\mu$ and $\nu$ are in ${\mathbb A}$, then \eqref{E1.1.3}
follows easily from (OP1). We have the following 8 cases to
consider.

{\sf Case 1: $\mu \in \Phi$ and $\nu\in {\mathbb A}$. }
Write $\mu=\varphi^{(m)}_k(a)$ and $\nu=\1_n$. Then
$$\begin{aligned}
{\text{LHS of \eqref{E1.1.3}}}
&=\varphi^{(m)}_k(a) \underset{i}{\circ} (\1_n\ast \sigma)
\underset{\text{by (C33)}}{=}
\varphi^{(m)}_k(a) \underset{i}{\circ}\1_n\\
&\underset{\text{by (C34)}}{=}
{\tiny
\begin{cases}
\varphi^{(m+n-1)}_k(a), & k<i,\\
\sum\limits_{w=k}^{k+n-1}\varphi^{(m+n-1)}_{k}(a)
+\sum\limits_{k\le k_1<k_2\le k+n-1}\psi^{(m+n-1)}_{k_1k_2}(f(a)), & k=i, \\
\varphi^{(m+n-1)}_{k+n-1}(a), & k>i.
\end{cases}
}
\end{aligned}
$$
and in the following computation we use the fact that
$f(a)\ast (2,1)=f(a)$ [Definition \ref{xxdef3.4}(3)] and
notation $(k'_1,k'_2)=((\sigma')^{-1}(k_1),(\sigma')^{-1}(k_2))$
or $((\sigma')^{-1}(k_2),(\sigma')^{-1}(k_1))$,
$$\begin{aligned}
&{\text{RHS of \eqref{E1.1.3}}}
=(\varphi^{(m)}_k(a) \underset{i}{\circ}\1_n)\ast \sigma'\\
&\underset{\text{by (C34)}}{=}
{\tiny
\begin{cases}
\varphi^{(m+n-1)}_k(a), & k<i,\\
\sum\limits_{w=k}^{k+n-1}\varphi^{(m+n-1)}_{k}(a)
+\sum\limits_{k\le k_1<k_2\le k+n-1}\psi^{(m+n-1)}_{k_1k_2}(f(a)), & k=i, \\
\varphi^{(m+n-1)}_{k+n-1}(a), & k>i.
\end{cases}
}\quad \ast \sigma'\\
&\underset{\text{by (C22) and (C23)}}{=}
{\tiny
\begin{cases}
\varphi^{(m+n-1)}_{(\sigma')^{-1}(k)}(a), & k<i,\\
\sum\limits_{w=k}^{k+n-1}\varphi^{(m+n-1)}_{(\sigma')^{-1}(k)}(a)
+\sum\limits_{k\le k'_1<k'_2\le k+n-1}
\psi^{(m+n-1)}_{k'_1k'_2}(f(a)), & k=i, \\
\varphi^{(m+n-1)}_{(\sigma')^{-1}(k+n-1)}(a), & k>i.
\end{cases}
}\\
&={\tiny
\begin{cases}
\varphi^{(m+n-1)}_{k}(a), & k<i,\\
\sum\limits_{w=k}^{k+n-1}\varphi^{(m+n-1)}_{k}(a)
+\sum\limits_{k\le k_1<k_2\le k+n-1}
\psi^{(m+n-1)}_{k_1k_2}(f(a)), & k=i, \\
\varphi^{(m+n-1)}_{k+n-1}(a), & k>i.
\end{cases}
}\\
\end{aligned}
$$
Hence \eqref{E1.1.3} holds.

{\sf Case 2: $\mu \in \Psi$ and $\nu\in {\mathbb A}$. }
Write $\mu=\psi^{(m)}_{k_1k_2}(y)$ and $\nu=\1_n$. Then
$$\begin{aligned}
{\text{LHS of \eqref{E1.1.3}}}
&=\psi^{(m)}_{k_1k_2}(y) \underset{i}{\circ} (\1_n\ast \sigma)
\underset{\text{by (C33)}}{=}
\psi^{(m)}_{k_1k_2}(y) \underset{i}{\circ}\1_n\\
&\underset{\text{by (C37)}}{=}
{\tiny \begin{cases}
\psi^{(m+n-1)}_{k_1+n-1, k_2+n-1}(y), & 1\le i<k_1, \\
\sum\limits_{w=k_1}^{k_1+n-1} \psi^{(m+n-1)}_{w, k_2+n-1}(y), & i=k_1, \\
\psi^{(m+n-1)}_{k_1, k_2+n-1}(y), & k_1<i<k_2,\\
\sum\limits_{w=k_2}^{k_2+n-1} \psi^{(m+n-1)}_{k_1, w}(y), & i=k_2,\\
\psi^{(m+n-1)}_{k_1k_2}(y), & k_2<i\le m,
\end{cases}
}
\end{aligned}
$$
and
$$\begin{aligned}
{\text{RHS of \eqref{E1.1.3}}}
&=(\psi^{(m)}_{k_1k_2}(y) \underset{i}{\circ} \1_n)\ast \sigma'\\
&\underset{\text{by (C37)}}{=}
{\tiny \begin{cases}
\psi^{(m+n-1)}_{k_1+n-1, k_2+n-1}(y), & 1\le i<k_1, \\
\sum\limits_{w=k_1}^{k_1+n-1} \psi^{(m+n-1)}_{w, k_2+n-1}(y), & i=k_1, \\
\psi^{(m+n-1)}_{k_1, k_2+n-1}(y), & k_1<i<k_2,\\
\sum\limits_{w=k_2}^{k_2+n-1} \psi^{(m+n-1)}_{k_1, w}(y), & i=k_2,\\
\psi^{(m+n-1)}_{k_1k_2}(y), & k_2<i\le m,
\end{cases}
} \quad \ast \sigma'\\
&\underset{\text{by \eqref{E3.6.2}}}{=}
{\tiny \begin{cases}
\psi^{(m+n-1)}_{k_1+n-1, k_2+n-1}(y), & 1\le i<k_1, \\
\sum\limits_{w=k_1}^{k_1+n-1} \psi^{(m+n-1)}_{w, k_2+n-1}(y), & i=k_1, \\
\psi^{(m+n-1)}_{k_1, k_2+n-1}(y), & k_1<i<k_2,\\
\sum\limits_{w=k_2}^{k_2+n-1} \psi^{(m+n-1)}_{k_1, w}(y), & i=k_2,\\
\psi^{(m+n-1)}_{k_1k_2}(y), & k_2<i\le m.
\end{cases}
}
\end{aligned}
$$
Hence \eqref{E1.1.3} holds.

{\sf Case 3: $\mu \in {\mathbb A}$ and $\nu\in \Phi$. }
Write $\mu=\1_m$ and $\nu=\varphi^{(n)}_k(a)$. Then
$$\begin{aligned}
{\text{LHS of \eqref{E1.1.3}}}
&=\1_m \underset{i}{\circ} (\varphi^{(n)}_k(a)\ast \sigma)
\underset{\text{by (C22)}}{=}
\1_m \underset{i}{\circ} \varphi^{(n)}_{\sigma^{-1}(k)}(a)\\
&\underset{\text{by (C32)}}{=}
\varphi^{(m+n-1)}_{\sigma^{-1}(k)+i-1}(a),
\end{aligned}
$$
and
$$\begin{aligned}
{\text{RHS of \eqref{E1.1.3}}}
&=(\1_m \underset{i}{\circ} \varphi^{(n)}_k(a))\ast \sigma'
\underset{\text{by (C32)}}{=}
\varphi^{(m+n-1)}_{k+i-1}(a) \ast \sigma'\\
&\underset{\text{by (C22)}}{=}
\varphi^{(m+n-1)}_{(\sigma')^{-1}(k+i-1)}(a)
=\varphi^{(m+n-1)}_{\sigma^{-1}(k)+i-1}(a).
\end{aligned}
$$
Hence \eqref{E1.1.3} holds.

{\sf Case 4: $\mu \in {\mathbb A}$ and $\nu\in \Psi$. }
Write $\mu=\1_m$ and $\nu=\psi^{(n)}_{k_1k_2}(y)$. Then
$$\begin{aligned}
{\text{LHS of \eqref{E1.1.3}}}
&=\1_m \underset{i}{\circ} (\psi^{(n)}_{k_1k_2}(y)\ast \sigma)
\underset{\text{by (C23)}}{=}\1_m \underset{i}{\circ}
{\tiny
\begin{cases}
\psi^{(n)}_{\sigma^{-1}(k_1)\sigma^{-1}(k_2)}(y) &
\sigma^{-1}(k_1)<\sigma^{-1}(k_2)\\
\psi^{(n)}_{\sigma^{-1}(k_2)\sigma^{-1}(k_1)}(y\ast (2,1)) &
\sigma^{-1}(k_1)>\sigma^{-1}(k_2)
\end{cases}
}\\
&\underset{\text{by (C33)}}{=}{\tiny
\begin{cases}
\psi^{(m+n-1)}_{\sigma^{-1}(k_1)+i-1,\sigma^{-1}(k_2)+i-1}(y) &
\sigma^{-1}(k_1)<\sigma^{-1}(k_2)\\
\psi^{(m+n-1)}_{\sigma^{-1}(k_2)+i-1,\sigma^{-1}(k_1)+i-1}(y\ast (2,1)) &
\sigma^{-1}(k_1)>\sigma^{-1}(k_2)
\end{cases}
}
\end{aligned}
$$
and
$$\begin{aligned}
{\text{RHS of \eqref{E1.1.3}}}
&=(\1_m \underset{i}{\circ} \psi^{(n)}_{k_1k_2}(y))\ast \sigma'\\
&\underset{\text{by (C33)}}{=}
\psi^{(m+n-1)}_{k_1+i-1,k_2+i-1}(y)\ast \sigma'\\
&\underset{\text{by (C23)}}{=}{\tiny
\begin{cases}
\psi^{(m+n-1)}_{(\sigma')^{-1}(k_1+i-1),(\sigma')^{-1}(k_2+i-1)}(y) &
(\sigma')^{-1}(k_1+i-1)<(\sigma')^{-1}(k_2+i-1)\\
\psi^{(m+n-1)}_{(\sigma')^{-1}(k_2+i-1),(\sigma')^{-1}(k_1+i-1)}(y) &
(\sigma')^{-1}(k_1+i-1)>(\sigma')^{-1}(k_2+i-1)\\
\end{cases}
}\\
&={\tiny
\begin{cases}
\psi^{(m+n-1)}_{\sigma^{-1}(k_1)+i-1,\sigma^{-1}(k_2)+i-1}(y) &
\sigma^{-1}(k_1)<\sigma^{-1}(k_2)\\
\psi^{(m+n-1)}_{\sigma^{-1}(k_2)+i-1,\sigma^{-1}(k_1)+i-1}(y\ast (2,1)) &
\sigma^{-1}(k_1)>\sigma^{-1}(k_2).
\end{cases}
}
\end{aligned}
$$
Hence \eqref{E1.1.3} holds.

{\sf Case 5: $\mu \in \Phi$ and $\nu\in \Phi$. }
Write $\mu=\varphi^{(m)}_{k_1}(a)$ and
$\nu=\varphi^{(n)}_{k_2}(b)$. Then
$$\begin{aligned}
{\text{LHS of \eqref{E1.1.3}}}
&=\varphi^{(m)}_{k_1}(a)\underset{i}{\circ}
(\varphi^{(n)}_{k_2}(b)\ast \sigma)
\underset{\text{by (C22)}}{=}
\varphi^{(m)}_{k_1}(a)\underset{i}{\circ}
\varphi^{(n)}_{\sigma^{-1}(k_2)}(b)\\
&\underset{\text{by (C35)}}{=}
{\tiny \begin{cases}
\psi^{(m+n-1)}_{k_1, i+\sigma^{-1}(k_2)-1}(g(a, b)), & k_1<i, \\
\begin{array}{ll}
\sum\limits_{k=k_1}^{k_1+\sigma^{-1}(k_2)-2} \psi^{(m+n-1)}_{k, k_1+\sigma^{-1}(k_2)-1}(g(a, b)
+f(a)\underset{2}\cdot b)+\varphi^{(m+n-1)}_{k_1+\sigma^{-1}(k_2)-1}(ab)\\
+\sum\limits_{k=k_1+\sigma^{-1}(k_2)}^{k_1+n-1} \psi^{(m+n-1)}_{k_1+\sigma^{-1}(k_2)-1, k}(g(b, a)
+f(a)\underset{1}\cdot b),
\end{array}& k_1=i, \\
\psi^{(m+n-1)}_{i+\sigma^{-1}(k_2)-1, k_1+n-1}(g(b, a)), & k_1>i,
\end{cases}}
\end{aligned}
$$
and
$$\begin{aligned}
{\text{RHS of \eqref{E1.1.3}}}
&=(\varphi^{(m)}_{k_1}(a)\underset{i}{\circ}
\varphi^{(n)}_{k_2}(b))\ast \sigma'\\
&\underset{\text{by (C35)}}{=}
{\tiny \begin{cases}
\psi^{(m+n-1)}_{k_1, i+k_2-1}(g(a, b)), & k_1<i, \\
\begin{array}{ll}
\sum\limits_{k=k_1}^{k_1+k_2-2} \psi^{(m+n-1)}_{k, k_1+k_2-1}(g(a, b)
+f(a)\underset{2}\cdot b)+\varphi^{(m+n-1)}_{k_1+k_2-1}(ab)\\
+\sum\limits_{k=k_1+k_2}^{k_1+n-1} \psi^{(m+n-1)}_{k_1+k_2-1, k}(g(b, a)
+f(a)\underset{1}\cdot b),
\end{array}& k_1=i, \\
\psi^{(m+n-1)}_{i+k_2-1, k_1+n-1}(g(b, a)), & k_1>i.
\end{cases}}\quad \ast \sigma'\\
&\underset{\text{by (C22) and (C23)}}{=}
{\tiny \begin{cases}
\psi^{(m+n-1)}_{k_1, i+\sigma^{-1}(k_2)-1}(g(a, b)), & k_1<i, \\
\begin{array}{ll}
\sum\limits_{k=k_1}^{k_1+\sigma^{-1}(k_2)-2} \psi^{(m+n-1)}_{k, k_1+\sigma^{-1}(k_2)-1}(g(a, b)
+f(a)\underset{2}\cdot b)+\varphi^{(m+n-1)}_{k_1+\sigma^{-1}(k_2)-1}(ab)\\
+\sum\limits_{k=k_1+\sigma^{-1}(k_2)}^{k_1+n-1} \psi^{(m+n-1)}_{k_1+\sigma^{-1}(k_2)-1, k}(g(b, a)
+f(a)\underset{1}\cdot b),
\end{array}& k_1=i, \\
\psi^{(m+n-1)}_{i+\sigma^{-1}(k_2)-1, k_1+n-1}(g(b, a)), & k_1>i.
\end{cases}}
\end{aligned}
$$
Hence \eqref{E1.1.3} holds.

{\sf Case 6: $\mu \in \Phi$ and $\nu\in \Psi$. }
Write $\mu=\varphi^{(m)}_{h}(a)$ and
$\nu=\psi^{(n)}_{k_1k_2}(y)$. Then
$$\begin{aligned}
{\text{LHS of \eqref{E1.1.3}}}
&=\varphi^{(m)}_{h}(a)\underset{i}{\circ}
(\psi^{(n)}_{k_1k_2}(y)\ast \sigma)
\underset{\text{by (C23)}}{=}
\varphi^{(m)}_{h}(a)\underset{i}{\circ}
\psi^{(n)}_{\sigma^{-1}(k_1)\sigma^{-1}(k_2)}(y)\\
&\underset{\text{by (C36)}}{=}
\begin{cases}
0, & h\neq i, \\
\psi^{(m+n-1)}_{h+\sigma^{-1}(k_1)-1, h+\sigma^{-1}(k_1)-1}(ay), & h=i,\\
\end{cases}
\end{aligned}$$
and
$$\begin{aligned}
{\text{RHS of \eqref{E1.1.3}}}
&=(\varphi^{(m)}_{h}(a)\underset{i}{\circ}
\psi^{(n)}_{k_1k_2}(y))\ast \sigma'\\
&\underset{\text{by (C36)}}{=}
\begin{cases}
0, & h\neq i, \\
\psi^{(m+n-1)}_{h+k_1-1, h+k_1-1}(ay), & h=i,\\
\end{cases} \quad \ast \sigma'\\
&\underset{\text{by (C23)}}{=}
\begin{cases}
0, & h\neq i, \\
\psi^{(m+n-1)}_{h+\sigma^{-1}(k_1)-1, h+\sigma^{-1}(k_1)-1}(ay), & h=i.\\
\end{cases}
\end{aligned}$$
Hence \eqref{E1.1.3} holds.

{\sf Case 7: $\mu \in \Psi$ and $\nu\in \Phi$. }
Write $\mu=\psi^{(m)}_{k_1k_2}(y)$ and
$\nu=\varphi^{(n)}_{h}(a)$. Then
$$\begin{aligned}
{\text{LHS of \eqref{E1.1.3}}}
&=\psi^{(m)}_{k_1k_2}(y)\underset{i}{\circ}
(\varphi^{(n)}_{h}(a)\ast \sigma)
\underset{\text{by (C22)}}{=}
\psi^{(m)}_{k_1k_2}(y)\underset{i}{\circ}
\varphi^{(n)}_{\sigma^{-1}(h)}(a)\\
&\underset{\text{by (C38)}}{=}
{\tiny \begin{cases}
0, & i\neq k_1, k_2, \\
\psi^{(m+n-1)}_{k_1+\sigma^{-1}(h)-1, k_2+n-1}(y \underset{1}\cdot a), & i=k_1, \\
\psi^{(m+n-1)}_{k_1, k_2+\sigma^{-1}(h)-1}(y \underset{2}\cdot a), & i=k_2,\\
\end{cases}
}
\end{aligned}
$$
and
$$
\begin{aligned}
{\text{RHS of \eqref{E1.1.3}}}
&=(\psi^{(m)}_{k_1k_2}(y)\underset{i}{\circ}
\varphi^{(n)}_{h}(a))\ast \sigma'\\
&\underset{\text{by (C38)}}{=}
{\tiny \begin{cases}
0, & i\neq k_1, k_2, \\
\psi^{(m+n-1)}_{k_1+h-1, k_2+n-1}(y \underset{1}\cdot a), & i=k_1, \\
\psi^{(m+n-1)}_{k_1, k_2+h-1}(y \underset{2}\cdot a), & i=k_2.\\
\end{cases}
} \quad \ast \sigma'\\
&\underset{\text{by (C23)}}{=}
{\tiny \begin{cases}
0, & i\neq k_1, k_2, \\
\psi^{(m+n-1)}_{k_1+\sigma^{-1}(h)-1, k_2+n-1}(y \underset{1}\cdot a), & i=k_1, \\
\psi^{(m+n-1)}_{k_1, k_2+\sigma^{-1}(h)-1}(y \underset{2}\cdot a), & i=k_2.\\
\end{cases}
}
\end{aligned}
$$
Hence \eqref{E1.1.3} holds.

{\sf Case 8: $\mu \in \Psi$ and $\nu\in \Psi$. }
In this case both sides of \eqref{E1.1.3} are zero, so
\eqref{E1.1.3} holds.

Combining all these cases, $\ip$ is a 2-unitary operad.
We define a morphism $u_{\ip}:\Com\to \ip$ by sending
$\1_m\to \1_m$ for all $m\geq 0$. Note that $u_{\ip}$
is an operadic morphism by (C21) and (C31).

Let $\{a_j\}_{j=1}^{d}$ be a basis of ${\bar A}$ and
$\{\mu_k\}_{k=1}^{m}$ a basis of $M$ where $d$ is the
dimension of $\bar A$ and $m$ is the dimension of $M$.
By construction,
$$\{\1_n, \varphi^{(n)}_{i}(a_j):=\1_n\underset{i}\circ a_j,
\psi^{(n)}_{i_1i_2}(\mu_k):=(\1_{n-1}\underset{1}\circ \mu_k)
\ast c_{i_1i_2}\mid i\in [n],j\in [d],k\in [m], 1\leq i_1<i_2\leq n\}$$
is a $\Bbbk$-basis of $\ip(n)$. As a consequence, the generating
function of $\ip$ is
\[G_{\ip}(t)=\sum\limits_{n=0}^{\infty}(1+dn+m\frac{n(n-1)}{2})t^n
=\frac{1}{1-t}+d\frac{t}{(1-t)^2}+m\frac{t^2}{(1-t)^3}.\]
Therefore $\ip$ has GK-dimension 3.
\end{proof}

\end{document}